\documentclass[11pt]{amsart}

\usepackage{amsmath}
\usepackage{amssymb}
\usepackage{amsthm}
\usepackage{bbm}
\usepackage{caption}
\usepackage{colonequals}
\usepackage{enumitem}
\usepackage[T1]{fontenc}
\usepackage[utf8]{inputenc}
\usepackage{mathpazo}
\usepackage[letterpaper, margin=3cm]{geometry}
\usepackage{graphicx}
\usepackage[
  colorlinks=true,   % false = boxed links; true = colored links
  linkcolor=blue,    % color of internal links
  urlcolor=blue,     % color of URLs
  citecolor=blue     % color of citations
]{hyperref}
\usepackage{mathrsfs}
\usepackage{mathtools}
\usepackage[expansion=false]{microtype}
\usepackage{relsize}
\usepackage{scalerel}
\usepackage{slashed}
\usepackage{stackengine}
\usepackage{stmaryrd}
\usepackage{tikz}
\usepackage{tikz-cd}
\usepackage{upgreek} % Provides upright Greek letters like \uppsi
\usepackage{xcolor}  % if you want to define custom colors

\theoremstyle{plain}
\newtheorem{cor}{Corollary}
\newtheorem{lem}{Lemma}

\newtheorem{prop}{Proposition}
\newtheorem{thm}{Theorem}

\theoremstyle{definition}
\newtheorem{definition}{Definition} 
\newtheorem{eg}{Example}

\theoremstyle{remark}
\newtheorem{remark}{Remark} 

\newcommand{\1}{\mathbf{1}}

\newcommand{\A}{\mathbb{A}}
\newcommand{\Aut}{\textrm{Aut}}
\newcommand{\B}{\mathcal{B}}

\newcommand{\C}{\mathbb{C}}

\newcommand{\CL}{\mathcal{C}\ell }

\newcommand{\ev}{\operatorname{ev}}
\newcommand{\F}{\mathcal{F}}
\newcommand{\G}{\mathcal{G}}

\newcommand{\h}{\mathfrak{h}}
\newcommand{\Hol}{\operatorname{Hol}}
\newcommand{\hol}{\operatorname{Hol}_{p}}

\newcommand{\mosva}{meromorphic open-string vertex algebra}
\newcommand{\mosvas}{meromorphic open-string vertex algebras}
\newcommand{\N}{\mathbb{N}}

\newcommand{\one}{\mathbbm{1}}

\newcommand{\R}{\mathbb{R}}
\newcommand{\res}{\operatorname{Res}}
\newcommand{\rk}{\operatorname{rk}}

\newcommand{\tr}{\operatorname{tr}}
\newcommand{\V}{\mathscr{V}}

\newcommand{\W}{\mathscr{W}}

\newcommand{\Y}{\mathcal{Y}}
\newcommand{\Z}{\mathbb{Z}}

\DeclareRobustCommand{\d}{\slashed{d}}
\renewcommand{\div}{\operatorname{div}}

\renewcommand{\k}{\mathbf{k}}
\renewcommand{\L}{{\mathcal{L}}}

\renewcommand{\S}{\mathcal{S}}

\newcommand\bigwidehat[1]{%
    \savestack{\tmpbox}{\stretchto{%
            \scaleto{%
                \scalerel*[\widthof{\ensuremath{#1}}]{\kern-. 6pt\bigwedge\kern-. 6pt}%
                {\rule[-\textheight/2]{1ex}{\textheight}}%WIDTH-LIMITED BIG WEDGE
            }{\textheight}% 
        }{0. 5ex}}%
    \stackon[1pt]{#1}{\tmpbox}%
}

\DeclareFontEncoding{LS1}{}{}
\DeclareFontSubstitution{LS1}{stix}{m}{n}
\DeclareSymbolFont{symbols2}{LS1}{stixfrak}{m}{n}
\DeclareMathSymbol{\typecolon}{\mathbin}{symbols2}{"25}
\ifdefined\pdfmapfile
  \pdfmapfile{+stix.map}
  \pdfmapfile{+stmaryrd.map}
\fi
\allowdisplaybreaks

\stackMath
\numberwithin{cor}{section}
\numberwithin{definition}{section} 
\numberwithin{eg}{section} 
\numberwithin{equation}{section}
\numberwithin{lem}{section} 
\numberwithin{prop}{section} 
\numberwithin{remark}{section} 
\numberwithin{thm}{section} 

\title[MOSVAs and Modules from Courant Algebroids]{Meromorphic open-string vertex algebras and twisted modules from Courant algebroids}

\author{Qixuan Fang}
\address{Department of Mathematics and Computer Science, Rutgers University, 101 Warren St., Newark, NJ 07102, USA}
\email{qixuan.fang@rutgers.edu; qixuan.fang.math@gmail.com}

\author{Fei Qi}
\address{School of Mathematics (Zhuhai), Sun Yat-Sen University, Zhuhai, 519082, Guangdong, China}
\email{qifei@mail.sysu.edu.cn; fei.qi.math.phys@gmail.com}

\date{\today}

\begin{document}
\begin{abstract}
We construct a sheaf of \(\frac 1 2 \mathbb{Z}\)-graded meromorphic open-string vertex algebras from a transitive Courant algebroid $E$ equipped with a generalized metric and a chosen generalized Levi-Civita connection on a smooth manifold. Using a holonomy principle for transitive anchored bundles, we realize these algebras as parallel sections of a tensor algebra bundle built from bosonic-fermionic affinizations of $E$. For even-rank $E$ with suitable Clifford module bundle data, we construct a sheaf of canonically twisted modules containing the canonical weighted spinor bundle in weight zero. The associative algebra of parallel tensors acts on spinor sections through iterates of an \(E\)-connection combining covariant differentiation and Clifford multiplication, with mixed terms incorporating the action of the bosonic connection on fermionic tensors. Distinguished states built from the inverse Courant pairing and inverse generalized metric give vertex-operator components, acting on embedded spinor sections as the canonical Dirac generating operator, its formal adjoint, and their anticommutator, the generalized Hodge Laplacian. In the exact Courant case with three-form $H$ and the exterior algebra Clifford module, the operators specialize to $d-H\wedge$, its adjoint, and the twisted Hodge Laplacian, reducing to the ordinary de Rham and Hodge operators when $H=0$. This extends the vertex-operator realization of geometric differential operators from functions to weighted spinors and differential forms, within the program of studying two-dimensional supersymmetric nonlinear sigma models.
\end{abstract}

\maketitle
\tableofcontents

\section{Introduction}

Meromorphic open-string vertex algebras (MOSVAs hereafter) were introduced by Yi-Zhi Huang in \cite{mosva} as a noncommutative generalization of vertex algebras.
MOSVAs retain associativity but do not require the commutativity, skew-symmetry, or the Jacobi identity. The construction of MOSVAs and their modules in \cite{mosva} was motivated by the mathematical study of two-dimensional quantum nonlinear sigma models (2d QNLSMs hereafter) with nonflat target Riemannian manifolds. 

Specifically, in \cite{Huang2026}, first circulated in 2012, Huang constructed a sheaf of \(\Z\)-graded MOSVAs from parallel sections of an affinized tensor bundle, and a sheaf of left modules from complex-valued smooth functions on a Riemannian manifold. The Laplace--Beltrami operator is realized as a component of a particular vertex operator on the embedded space of smooth functions \cite[Section~6]{Huang2026}\footnote{We use the author's corrected final version of this paper; see bibliography.}. Eigenfunctions of the Laplacian-Beltrami operator may be viewed as quantum states. The modules they generate may be viewed as string-theoretical excitations. This is a bosonic construction, and it is the starting point of the vertex algebraic approach to a mathematical theory of 2d QNLSMs. In particular, it is precisely the absence of commutativity axiom in MOSVAs that allows curvature information to be encoded.

Building on this construction, the second author determined the MOSVA, modules generated by Laplace--Beltrami eigenfunctions, and their irreducible quotients over connected, orientable two-dimensional space forms of nonzero constant curvature \cite{Qi2021-MOSVA-SpaceForms}. The higher-dimensional study in \cite{qi2022covariantderivative} proves that the parallel tensors act as a scalar via covariante derivatives on an eigenfunction, and formulates a geometric and combinatorial conjecture for analyzing the irreducible modules further \cite[Section~6, Conjecture~2 and Remark~6.6]{qi2022covariantderivative}.

The supersymmetric setting motivates extending this program to modules generated by spinors. In \cite{fiordalisi2024fermionic}, Fiordalisi and the second author constructed purely fermionic \(\frac{\Z}{2}\)-graded MOSVAs; in \cite{qi2024fermionic}, the second author constructed their canonically \(\Z_2\)-twisted modules. These are fermionic counterparts of the bosonic constructions in \cite{mosva}. In \cite{FHQ2026}, Huang and the authors give a combined bosonic-fermionic construction of \(\frac{\Z}{2}\)-graded MOSVAs and their canonically twisted modules, generalizing the free boson-fermion vertex superalgebra construction to a noncommutative setting. This work supplies the algebraic foundation of the present paper.

The purpose of this paper is to give a bosonic-fermionic construction of a sheaf of \(\frac{\Z}{2}\)-graded MOSVAs and a sheaf of their canonically twisted left modules from a transitive Courant algebroid \(E\) on a smooth manifold. It forms part of our mathematical study of 2d supersymmetric QNLSMs via vertex-algebraic structures. 
Courant algebroids enter naturally as generalized geometric data into classical supersymmetric sigma models \cite{Gualtieri2004,BredthauerLindstromPerssonZabzine2006}.  The Hamiltonian phase space of a sigma model with target \(M\) is \(T^*C^\infty(S^1,M)\), whose symplectic structure is twisted by the closed three-form \(H\) in the presence of a Wess--Zumino term.  Alekseev and Strobl showed that sigma-model currents labelled by \(X+\xi\in\Gamma(TM\oplus T^*M)\) have Poisson brackets governed by the \(H\)-twisted Courant bracket, with the anomaly controlled by the natural pairing; Dirac structures then correspond to anomaly-free current subalgebras \cite{AlekseevStrobl2005}.
  The metric data of the sigma model also fit naturally: relative to a splitting of the exact Courant algebroid, a generalized metric is represented by a Riemannian metric \(g\) together with a two-form \(b\) \cite{Gualtieri2004}.  For \(N=(2,2)\) supersymmetric sigma models, Gates, Hull, and Ro\v{c}ek derived the corresponding bi-Hermitian target geometry \cite{GatesHullRocek1984}.  Gualtieri identified this geometry with generalized K\"ahler geometry, described by two commuting generalized complex structures whose negative product defines a positive generalized metric \cite{Gualtieri2004}.  Bredthauer, Lindstr\"om, Persson, and Zabzine subsequently recovered this correspondence directly in the Hamiltonian formulation of the supersymmetric sigma model, including the \(H\)-twisted case \cite{BredthauerLindstromPerssonZabzine2006}.  These results motivate the use of Courant algebroids together with generalized metrics in our construction.

For the sheaf of MOSVAs, the geometric input consists of two copies of a transitive Courant algebroid with a specified Courant isomorphism \(\iota:E^\B\to E^\F\), carrying the generalized metric from the bosonic copy to the fermionic copy; a Levi--Civita generalized \(E^\B\)-connection \(\nabla^\B\) on \(E^\B\), and its transported \(E^\B\)-connection \(\nabla^\F\) on \(E^\F\). The holonomy sheaf depends on the choice of \(\nabla^\B\). In the positive-isotropy model below we use the natural connection of \cite[Definition~5.2]{GarciaFernandezRubioTipler2017StromingerModuli}. Using the algebraic construction of \cite{FHQ2026} and the holonomy principle for transitive anchored vector bundles established in Section \ref{subsec-Holonomy principle for transitive anchored vector bundles}, we construct the MOSVAs as parallel sections of the tensor algebra bundle of a suitable affinization of the Courant algebroids.

The construction of the sheaf of canonically twisted modules requires additionally even rank and a fixed absolutely irreducible real \(\CL(E)\)-module bundle. We retain the real determinant-density normalization of \cite{CortesDavid2021}, then complexify and specify a global Clifford-compatible grading. Section \ref{subsection-Modules-for-associative algebra} states these hypotheses precisely, then constructs the canonical weighted spinor bundle \(\S\); here and below, canonical means relative to the fixed spinor and grading data. \(\S\) is equipped with a Clifford-compatible \(E\)-connection \(\nabla^\S\). We combine covariant differentiation and Clifford multiplication in the \(E^{\B,\F}\)-connection \(\A=\nabla^\S+\kappa\), where \(\kappa\) is the bundle map corresponding to Clifford multiplication. Iterating \(\A\) gives a module structure \(\Phi\) on \(\Gamma(\S)\) for the associative algebra of parallel sections of \(T(E^\B\oplus E^\F)\), under the natural extension of \(\nabla^\B\) and \(\nabla^\F\) (Theorem \ref{Module for associative algebra}). There is an intrinsic action of the bosonic \(E\)-connection on fermionic tensors (Proposition \ref{equivalent construction} and Remark \ref{interaction interpretation}). We believe that it shall be interpreted as a shadow of how bosons and fermions couple and interact in a supersymmetric quantum field theory with interaction.

We next review the canonical Dirac generating operator \(\d_\S\) and its formal adjoint \(\d_\S^*\), largely following \cite{CortesDavid2021} and \cite{GrutzmannMichelXu2021}, but carefully adjust their arguments according to our hypotheses when necessary. As a particular example, for the positive-isotropy examples, using a generalized-metric-adapted dissection, we compute \(\d_\S\) and \(\d_\S^*\) explicitly in Appendix \ref{appendix-explicit calculation of DGO}. We compare the expression for \(\d_\S\) with \cite[Theorem~61, equation~(75)]{CortesDavid2021}, which is stated in neutral signature. We then introduce the generalized Hodge Laplacian operator \(\mathcal H_\S=\d_\S\d_\S^*+\d_\S^*\d_\S\) and discuss its basic properties: it is formally self-adjoint and nonnegative on compactly supported smooth spinors, and its double commutators with functions and its principal symbol recover the induced Riemannian metric on the base manifold. Moreover, \(\mathcal H_\S\) commutes with \(\d_\S\) and \(\d_\S^*\).

The inverse Courant pairing and inverse generalized metric are parallel tensors whose actions through \(\Phi\), with the appropriate coefficients, are \(\d_\S\) and \(\d_\S^*\). Regarding \(\mathcal H_\S\), it can also be realized through \(\Phi\) by a parallel tensor of degree four. 
They correspond to the particular vertex operators where these differential operators on the embedded ground state space $\Gamma(S)$ appears as components (Theorem \ref{d and d*}). Since \(\d_\S\) recovers the anchor and Courant bracket by derived brackets, \(\d_\S^*\) uses the positive density pairing determined by the generalized metric, and \(\mathcal{H}_\S\) reconstructs the Riemannian metric as above, we realize these geometric data in our framework of representation theory of MOSVAs. Thus the resulting theory is metric-dependent and analytic, and may be viewed as a super-quantum-theoretic excitation of the generalized geometric structures involved.

As a special case, in the exact Courant case with the exterior-form Clifford module, the Dirac generating operator and its formal adjoint are \(d-H\wedge\) and \(d^*+\iota_H\), with the contraction convention of Theorem \ref{thm-explicit calculation of DGO}. Their anticommutator is the twisted Hodge Laplacian operator \cite{Gualtieri2004Hodge}. When \(H=0\), it is \(dd^*+d^*d\), whose restriction to functions is the nonnegative Laplace--Beltrami operator. It extends the vertex-operator realization viewpoint of \cite{Huang2026} from functions to weighted spinors and, in this specialization, differential forms.

The paper is organized as follows: Section \ref{section-Vertex algebraic foundations} recalls the
axiomatic definitions and the bosonic-fermionic construction of
\cite{FHQ2026}. Section \ref{section-geometric-construction-1} establishes
the holonomy principle, using which we construct the sheaf of MOSVAs. Then we construct the
action \(\Phi_U\) for the associative algebra of parallel tensors, and then construct the sheaf of twisted modules.
Section \ref{Section-Dirac generating operators and vertex operators}
gives the Dirac generating and adjoint operators and the generalized Hodge Laplacian with metric
reconstruction, and in particular the realization of these differential operators as components of distinguished vertex operators. Appendix \ref{appendix-explicit calculation of DGO}
contains the explicit calculations in the adapted dissection.

\paragraph{\textbf{Acknowledgments}}
The authors would like to thank Yi-Zhi Huang and Yunhe Sheng for their support and helpful discussions. Q. Fang acknowledges support from the Simons Dissertation Fellowship in Mathematics (SFI-MPS-SDF-00026922). F. Qi acknowledges the support from Sun Yat-Sen University Research Start-up Grant (74120-12266016). We used ChatGPT 5.6-Sol and 6-Astra to detect mistakes and gaps of the previous drafts. The model also helped to organize Appendix A. 

\paragraph{\textbf{Notations and Conventions}}{We use \(\1\) for the vacuum of a MOSVA, \(\mathbbm{1}_X\) for the identity operator on a space \(X\), and \(1\) for the complex scalar \(1\in \C\). 
 We will use \(\underline{\C}\) for a trivial line bundle with fiber \(\C\) over \(M\), and \(\Gamma(F)\) for the space of sections of a vector bundle \(F\). All vector bundles and sections are smooth. We write \(\N=\Z_{\geq0}\) and \(\Z_+=\Z_{>0}\). For a graded vector space \(X=\coprod_\lambda X_\lambda\), its restricted dual is \(X'=\coprod_\lambda X_\lambda^*\).}

\section{\texorpdfstring{\(\frac{\Z}{2}\)-graded meromorphic open-string vertex algebras and \(\theta\)-twisted modules}{Half-integer-graded meromorphic open-string vertex algebras and theta-twisted modules}}\label{section-Vertex algebraic foundations}
\subsection{Axiomatic definitions}\label{subsection-Definitions of MOSVAs and modules}
Recall from \cite{FHQ2026} the following definition.

\begin{definition}\label{Def-super-mosva}
A \textit{$\frac{\Z}{2}$-graded meromorphic 
open-string vertex algebra} is a $\frac{\Z}{2}$-graded 
vector space $V=\coprod_{n \in \frac{\Z}{2}} V_{(n)} $ 
(graded by \textit{weights}) equipped with a \textit{vertex operator map}
\begin{align*}
Y: V \otimes V &\, \rightarrow\, V[[x, x^{-1}]]\\
u \otimes v &\, \mapsto\, Y(u, x) v
\end{align*}
and a \textit{vacuum} $\1 \in V$, satisfying the following conditions:

\begin{enumerate}

\item Axioms of the grading: 
\begin{enumerate}

\item \textit{Lower bound condition}: When $n$ is sufficiently 
negative, $V_{(n)}=0$.

\item \textit{$\mathbf{d}$-commutator formula}: 
Let $\mathbf d_V: V \rightarrow V$ be defined by 
$\mathbf d_V v=n v$ for $v \in V_{(n)} $. Then, for every $v \in V$
$$
\left[\mathbf d_V, Y(v, x)\right]=x \frac{d}{d x} Y(v, x)+Y\left(\mathbf d_V v, x\right).
$$

    \end{enumerate}
    
\item  Axioms of the vacuum:

\begin{enumerate}

    \item \textit{Identity property}: $Y(\mathbf{1}, x)=\one_V$.    
    \item  \textit{Creation property}: For $u \in V, Y(u, x) \mathbf{1} \in V[[x]]$ and $\lim _{x \rightarrow 0} Y(u, x) \1=u$.
    
\end{enumerate}

\item \textit{$D$-derivative property and $D$-commutator formula}: 
Define $D_V$ : $V \rightarrow V$ by
$$
D_V v=\lim _{x \rightarrow 0} \frac{d}{d x} Y(v, x) \mathbf{1}
$$
for $v \in V$. Then for $v \in V$,
$$
\frac{d}{d x} Y(v, x)=Y\left(D_V v, x\right)=\left[D_V, Y(v, x)\right].
$$

\item {\it Rationality}: For $u_{1}, \dots, u_{n}, v\in V$
and $v'\in V'$, the series 
\begin{align}\label{product}
&\langle v', Y_{V}(u_{1}, z_1)\cdots Y_{V}(u_{n}, z_n)v\rangle \nonumber\\
=&\langle v', Y_{V}(u_{1}, x_1)\cdots Y_{V}(u_{n}, x_n)v\rangle \bigg|_{x_{1}=z_{1},
\dots, x_{n}=z_{n}}
\end{align}
converges absolutely 
when $|z_1|>\cdots >|z_n|>0$ to a rational function in $z_{1}, \dots, z_{n}$
with the only possible poles at $z_{i}=0$ for $i=1, \dots, n$ and $z_{i}=z_{j}$ 
for $i\ne j$. For $u_{1}, u_{2}, v\in V$
and $v'\in V'$, the series 
\begin{align}\label{iterate}
&\langle v', Y_{V}(Y_{V}(u_{1}, z_1-z_{2})u_{2}, z_2)v\rangle \nonumber\\
=&\langle v', Y_{V}(Y_{V}(u_{1}, x_{0})u_{2}, x_2)v\rangle
\bigg|_{x_{0}=z_{1}-z_{2},\;
x_{2}=z_{2}}
\end{align}
converges absolutely when $|z_{2}|>|z_{1}-z_{2}|>0$ to a rational function
with the only possible poles at $z_{1}=0$, $z_{2}=0$ and $z_{1}=z_{2}$. 

\item {\it Associativity}: For $u_{1}, u_{2}, v\in V$, 
$v'\in V'$, we have
\begin{equation}\label{alg-associativity}
\langle v', 
Y_{V}(u_{1},z_1)Y_{V}(u_{2},z_2)v\rangle
=
\langle v', 
Y_{V}(Y_{V}(u_{1},z_{1}-z_{2})u_{2},z_2)v\rangle
\end{equation}
when  $|z_{1}|>|z_{2}|>|z_{1}-z_{2}|>0$. 
\end{enumerate}
If $\dim V_{(n)}<\infty$ for $n\in \frac{\Z}{2}$, we say that $V$ is  
{\it grading-restricted}.
\end{definition}

\begin{remark}
The notion of a \(\frac{\Z}{2}\)-graded MOSVA is a natural noncommutative generalization of vertex superalgebras, where there exists an additional $\Z_2$-grading $V = V^{[0]} \oplus V^{[1]}$. If such a grading is designated on a \(\frac{\Z}{2}\)-graded MOSVA, we define the parity $|u| = i$ for each homogeneous $u \in V^{[i]}, i \in \Z_2$. In particular, the vacuum is even and every vertex coefficient preserves parity, i.e. \(u_n v\in V^{[i+j]}\) for \(u\in V^{[i]}\), \(v\in V^{[j]}\). 
\end{remark}

Clearly, the map $\theta: V \to V$ given by
$$\theta(v)=(-1)^i v,\ v\in V^{[i]},\ i=0, 1$$
is an involution of the $\frac{\Z}{2}$-graded meromorphic 
open-string vertex algebra\ $V$, i.e., 
$$\theta^2(v) = v,\ {\text{ and }}\ \theta(Y(u,x)v)
=Y(\theta(u), x)\theta(v), \ u, v\in V. $$
We call $\theta$ the \textit{parity involution}. In this paper we use the weight parity
\[
V^{[0]}=\coprod_{n\in\Z}V_{(n)},\ 
V^{[1]}=\coprod_{n\in\Z}V_{(n+1/2)},\ 
\theta=e^{2\pi i\mathbf d_V}.
\] 
Following physical conventions, the even and odd states are called \textit{bosonic} and \textit{fermionic}, respectively. The following immediate fixed-point property will be used below. 
\begin{prop}[Proposition 3.3 in \cite{Huang2026}]\label{Prop 3.3 in Huang 2012} Let $H$ be a group of weight-preserving automorphisms of a meromorphic open-string vertex algebra $\left(V, Y_V, \mathbf{1}\right)$, and let $V^H$ be the subspace of $V$ consisting of points fixed by $H$, and denote the restriction of $Y_V$ to $V^H \otimes V^H$ by $Y_{V^H}$. Then the triple $\left(V^H, Y_{V^H}, \mathbf{1}\right)$ is a meromorphic open-string vertex subalgebra of $\left(V, Y_V, \mathbf{1}\right)$.
\end{prop}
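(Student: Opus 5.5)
The plan is to check each axiom of Definition \ref{Def-super-mosva} for $V^H$ directly, using only that every $h\in H$ is a weight-preserving automorphism. Here an automorphism means a linear bijection $h:V\to V$ with $h\mathbf 1=\mathbf 1$ and $h\,Y_V(u,x)v=Y_V(hu,x)hv$.

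The first and only substantive step is closure. We must show that $\mathbf 1\in V^H$ and that $Y_V(u,x)v\in V^H[[x,x^{-1}]]$ whenever $u,v\in V^H$. The first holds because $h\mathbf 1=\mathbf 1$. For the second, compare coefficients of $x^{-n-1}$ in $hY_V(u,x)v=Y_V(hu,x)hv=Y_V(u,x)v$. This gives $h(u_nv)=u_nv$ for every $n$ and every $h\in H$, so each coefficient lies in $V^H$. Hence $Y_{V^H}$ is a well-defined map $V^H\otimes V^H\to V^H[[x,x^{-1}]]$.

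Next we need the grading. Since each $h$ preserves weights, it commutes with the projections onto the homogeneous subspaces $V_{(n)}$. Therefore $V^H=\coprod_n (V^H)_{(n)}$ with $(V^H)_{(n)}=V_{(n)}\cap V^H$. The lower bound condition is inherited from $V$. The operator $\mathbf d_{V^H}$ is the restriction of $\mathbf d_V$, so the $\mathbf d$-commutator formula is the restriction of the one for $V$. The identity and creation properties are also restrictions. In particular $D_{V^H}=D_V|_{V^H}$, and this preserves $V^H$ because $D_Vv$ is the coefficient $v_{-2}\mathbf 1$, which lies in $V^H$ by the closure step. The $D$-derivative property and $D$-commutator formula then follow by restricting to $V^H$.

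The main point needing care is rationality and associativity, because these are stated with the restricted dual $(V^H)'$ rather than $V'$. To handle this, take $v'\in (V^H)'$ and extend it to an element of $V'$ by letting it vanish on a graded complement of $(V^H)_{(n)}$ in each $V_{(n)}$. For arguments in $V^H$, every series in the rationality and associativity axioms has all its coefficients in $V^H$. So pairing with $v'$ or with its extension gives the same series. Absolute convergence, rationality with the prescribed poles, and the associativity equality \eqref{alg-associativity} therefore transfer directly from $V$. This completes the proof that $(V^H,Y_{V^H},\mathbf 1)$ is a meromorphic open-string vertex subalgebra.
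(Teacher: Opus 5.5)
Your proof is correct and follows the standard argument behind the cited result. The paper gives no proof of its own; it calls this an immediate fixed-point property and cites Huang's Proposition 3.3. The only substantive step is closure, via $hY_V(u,x)v=Y_V(hu,x)hv=Y_V(u,x)v$ for $u,v\in V^H$, and all remaining axioms are inherited by restriction. Your explicit treatment of the restricted dual, extending $v'\in (V^H)'$ by zero on a graded complement of each $(V^H)_{(n)}$ in $V_{(n)}$, is a correct detail that the statement leaves implicit.
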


 We now define a $g$-twisted $V$-module for an automorphism \(g\) of 
$V$ of finite order.

\begin{definition}
   \label{twisted-mod}
Let $(V, Y, \1)$ be a $\frac{\Z}{2}$-graded 
meromorphic open-string vertex algebra and 
$g$ an automorphism of $(V, Y, \1)$ of a finite order $k$.	
A \textit{lower-bounded $g$-twisted $V$-module} is a $\mathbb{C}$-graded 
vector space $W=\coprod_{n\in\C, \alpha\in \C/\Z} W_{(n)}^{[\alpha]}$ (graded by {\it weights} and {\it $g$-weights}) equipped with a {\it twisted vertex operator map}
\[
Y_W^g:V\otimes W\rightarrow W[[x^{1/k},x^{-1/k}]],
\quad
Y_W^g(v,x)w=\sum_{n\in(1/k)\Z}(Y_W^g)_n(v)w\,x^{-n-1}.
\]
For each \(p\in\Z\), its analytic branch is
\[
(Y_W^g)^p(v,z)w
=\sum_{n\in(1/k)\Z}(Y_W^g)_n(v)w\,e^{(-n-1)l_p(z)}
\in\overline W,
\]
where \(l_p(z)=\log|z|+i(\arg z+2\pi p)\), \(0\leq\arg z<2\pi\),
and \(\overline W=\prod_{n,\alpha}W_{(n)}^{[\alpha]}\).
The module is also equipped with
the grading operator $\mathbf{d}_{W}$ and an invertible,
weight-preserving linear operator $g_W$,
 satisfying the following axioms:
\begin{enumerate}			

\item Properties for the grading: 

\begin{enumerate}

\item \textit{Lower bound condition}:  When $\text{Re}{(m)}$ is sufficiently negative, $W_{(m)}^{[\alpha]}=0$ for every $\alpha\in \C/\Z$. 
				
\item  \textit{$\mathbf{d}$-grading and $g$-grading condition}: 
for every $w\in W_{(m)}^{[\alpha]}$, $\mathbf d_W w = m w$, $g_Ww = e^{2\pi i \alpha}w$. Moreover, for $v\in V, w\in W$, 
$$g_W Y^g_W(v, x) w = Y_W^g(gv, x) g_Ww. $$
				
\item  \textit{$\mathbf{d}$-commutator formula}: For $u\in V$, 
$$[\mathbf{d}_{W}, Y_W^g(u,x)]
= Y_W^g(\mathbf{d}_{V}u,x)+x\frac{d}{dx}Y_W^g(u,x).$$

\end{enumerate}
			
\item \textit{Identity property}: $Y_W^g(\1,x)=\one_W$.

\item {\it Generalized rationality}: 
For $0\le l<k$, let $V^{[l]}$ be the eigenspace of 
$g$ with eigenvalue $e^{\frac{2\pi i l}{k}}$. 
For $n\in \N$, 
$0\le k_{1}, \dots, k_{n}<k$,
 $v_1\in V^{[k_{1}]}, ..., v_n\in V^{[k_{n}]}$,
$w\in W,\ w'\in W'$, there exists a generalized rational function 
$$f(z_1, ..., z_n) = P(z_1, ..., z_n)\prod_{i=1}^n z_i^{\frac{k_{i}}{k}}z_i^{-q_i}
\prod_{1\le i < j \le n}(z_i-z_j)^{-q_{ij}}, $$
where $q_{i}, q_{ij}\in \N$ and $P(z_1, ..., z_n)$ is
a polynomial in $z_{1}, \dots, z_{n}$, such that for every $p\in \Z$,
$$\langle w', (Y_W^g)^p(v_1, z_1) \cdots 
(Y_W^g)^p(v_n, z_n)w\rangle$$
converges absolutely in the region $|z_1|>\cdots > |z_n|>0$
to the $p$-th branch 
$$f^{p, ..., p}(z_1, ..., z_n) =P(z_1, ..., z_n) \prod_{i=1}^n e^{\frac{k_{i}}{k}l_{p}(z_{i})}
z_i^{-q_i}
\prod_{1\le i < j \le n}(z_i-z_j)^{-q_{ij}}$$
of $f(z_1, ..., z_n)$. 
For $v_{1}\in V^{[k_{1}]}, v_{2}\in V^{[k_{2}]}$, $w\in W$
and $w'\in W'$, there exists a generalized rational function 
$$f(z_{1}, z_{2})=P(z_1, z_2)z_1^{\frac{k_{1}}{k}} z_1^{-q_1}
z_2^{\frac{k_{2}}{k}} z_2^{-q_2}
 (z_1-z_2)^{-t}, $$
where $q_{1}, q_{2}, t\in \N$ and 
$P(z_1, z_2)$ is
a polynomial in $z_{1}, z_{2}$ such that 
for every $p\in \Z$,
$$\langle w', (Y_W^g)^p(Y_{V}(v_{1}, z_1-z_{2})v_{2}, z_2)w\rangle$$
converges absolutely when $|z_{2}|>|z_{1}-z_{2}|>0$ and its sum 
is equal to  the branch 
$$f^{p, p}(z_{1}, z_{2})=P(z_1, z_2)e^{\frac{k_{1}}{k}l_{p}(z_{1})} z_1^{-q_1}
e^{\frac{k_{2}}{k}l_{p}(z_{2})} z_2^{-q_2}
 (z_1-z_2)^{-t}$$
on the region given by $|z_{2}|>|z_{1}-z_{2}|>0$ and 
$|\arg z_{1}-\arg z_{2}|<\frac{\pi}{2}$. 

\item {\it Associativity}: For $v_{1}, v_{2}\in V$, $w\in W$ and
$w'\in W'$, we have
\begin{equation}\label{mod-associativity}
\langle w',  (Y_W^g)^p(v_{1},z_1)(Y_W^g)^p(v_{2},z_2)w\rangle
=\langle w', 
(Y_W^g)^p(Y_{V}(v_{1},z_{1}-z_{2})v_{2},z_2)w\rangle
\end{equation}
when  $|z_{1}|>|z_{2}|>|z_{1}-z_{2}|>0$
 and $|\arg z_{1}-\arg z_{2}|<\frac{\pi}{2}$. 
\end{enumerate}  
Writing \(W_{(n)}=\coprod_\alpha W_{(n)}^{[\alpha]}\), if $\dim W_{(n)}<\infty$ for $n\in \C$, we say that $W$ is a {\it 
grading-restricted 
$g$-twisted $V$-module}, or, simply a {\it $g$-twisted module}. 
\end{definition}

\begin{remark}\label{Def-bosonic and fermionic module}
In this paper we are only interested in the case \(g=\theta\). We call the resulting modules \textit{canonically twisted}, and omit \(\theta\) from the vertex-operator notation. 
\end{remark}

\subsection{Construction theorems}\label{subsection-algebraic construction}
We recall the algebraic construction of \(\frac{\Z}{2}\)-graded MOSVAs and their canonically twisted modules (cf. \cite{FHQ2026}).  We call this construction \textit{bosonic-fermionic}. Let \(\h^\B\) and \(\h^\F\) be two isomorphic finite-dimensional complex vector spaces, each equipped with a nondegenerate complex symmetric bilinear form and a specified element, denoted separately by 
\(\left(\h^\B, (\cdot, \cdot)_\B, \k_\B\right)\) and \(\left(\h^\F, (\cdot, \cdot)_\F, \k_\F\right)\). Consider the following affinizations
\[\widehat{\h}^\B_\Z \coloneqq  \h^\B \otimes \C\left[t, t^{-1}\right] \oplus \C \k_\B,\ \widehat{\mathfrak{h}}^\F_{\Z+1/2}=\mathfrak{h}^\F \otimes t^{1/2}\mathbb{C}\left[t, t^{-1}\right]\oplus \C\k_\F.\]
As vector spaces, we have \[\widehat{\mathfrak{h}}^\B_\Z=\widehat{\mathfrak{h}}^{\B, -}_\Z\oplus \widehat{\mathfrak{h}}^{\B, 0}_\Z\oplus \widehat{\mathfrak{h}}^{\B,+}_\Z,\ \widehat{\mathfrak{h}}^\F_{\Z+1/2}=\widehat{\mathfrak{h}}^{\F, -}_{\Z+1/2}\oplus \widehat{\mathfrak{h}}^{\F, 0}_{\Z+1/2}\oplus \widehat{\mathfrak{h}}^{\F, +}_{\Z+1/2},\]
where 
\[\widehat{\mathfrak{h}}^{\B, \pm}_\Z=\h^\B\otimes t^{ \pm 1} 
\mathbb{C}\left[t^{ \pm 1}\right], \ 
\widehat{\mathfrak{h}}^{\B, 0}_\Z
=\mathfrak{h}^\B \otimes \C t^0 \oplus \mathbb{C} 
\mathbf{k}_\B \cong \h^\B \oplus \C\k_\B\]
and 
\[\widehat{\mathfrak{h}}^{\F, \pm}_{\Z+1/2}=\mathfrak{h}^\F \otimes 
t^{ \pm 1/2} \mathbb{C}\left[t^{\pm 1}\right], \ \widehat{\h}^{\F, 0}_{\Z+1/2} = \C \k_\F.\]
Let
\(\widehat{\h}^{\B, \F}_{2^{-1}\Z}\coloneqq \widehat{\h}^\B_\Z\oplus \widehat{\h}^\F_{\Z+1/2}\). We similarly denote \(\widehat{\h}^{\B, \F, \pm}_{2^{-1}\Z}\) and  \(\widehat{\h}^{\B, \F, 0}_{2^{-1}\Z}\).
We define the \textit{parity} \(|\cdot|\) by $|u|=0$ for $u \in \mathfrak{h}^{\mathcal{B}}$, and $|u|=1$ for $u \in \mathfrak{h}^{\mathcal{F}}$. Set \(|a\otimes t^r|=|a|\) for every mode. This parity extends naturally to the tensor algebra \(T\left(\widehat{\h}^{\B, \F}_{2^{-1}\Z}\right)\), which in turn has a natural \(\Z_2\)-grading induced from the parity.  For two $\mathbb{Z}_2$-homogeneous elements $u_1$ and $u_2$ of \(T\left(\widehat{\h}^{\B, \F}_{2^{-1}\Z}\right)\), we define their \textit{parity-graded commutator} by
\(
\left[u_1, u_2\right]=u_1 u_2-(-1)^{\left|u_1\right|\left|u_2\right|} u_2 u_1\).
Define \(N\left(\widehat{\h}^{\B, \F}_{2^{-1}\Z}\right)\) to be the quotient of \(T\left(\widehat{\h}^{\B, \F}_{2^{-1}\Z}\right)\)
by the two-sided ideal generated by elements of the form 
\begin{equation}\label{Bosonic-fermionic quotient}
\begin{aligned}
 &\left[\k_\B, \k_\F\right], \\
&\left[a\otimes t^k, \mathbf{k}_\B\right],\; \   \text{ for }a\in \h^\B, k\in \Z,\\
&\left[a\otimes t^k, \mathbf{k}_\F\right],\; \   \text{ for }a\in \h^\B, k\in \Z,\\
&\left[a\otimes t^{k+1/2}, \mathbf{k}_\B\right], \; \   \text{ for }a\in \h^\F, k\in \Z,\\
&\left[a\otimes t^{k+1/2}, \mathbf{k}_\F\right], \; \   \text{ for }a\in \h^\F, k\in \Z,\\
&\left[a \otimes  t^k , b \otimes  t^0\right],\; \   \text{ for }a,b \in \h^\B, k\in \Z\setminus \{0\},\\
&\left[a \otimes  t^{k+1/2} , b \otimes  t^0\right],\; \   \text{ for }a\in \h^\F ,b \in \h^\B, k\in \Z,\\
&\left[a \otimes t^m, b \otimes t^n\right]-m(
a, b) \delta_{m+n, 0} \mathbf{k}_\B,\; \   \text{ for }a, b\in \h^\B, m\in\Z_+, n\in -\Z_+, \\
&\left[a \otimes t^{m+1/2}, 
b \otimes t^{n+1/2}\right]-(a, 
b) \delta_{m+n+1, 0} \mathbf{k}_\F, \;\   \text{ for }a,b\in \h^\F, m\in\N, n\in -\Z_+,\\
&\left[a \otimes t^m, b \otimes 
t^{n+1/2}\right],\; \   \text{ for }a\in \h^\B, b\in \h^\F, m\in\Z_+, n\in -\Z_+, \;
\text{or} \;m\in -\Z_+, n\in \N .
\end{aligned}
\end{equation}
Here and below, for \(\Lambda=\frac{\Z}{2}\) or \(\Z\),
\(N\left(\widehat{\h}^{\B,\F,+,0}_{\Lambda}\right)\) denotes the unital subalgebra of the corresponding quotient generated by all positive modes, all zero modes and the two central symbols.
Note that in this quotient, there are no commutator relations between $a \otimes t^m$ and $b \otimes t^n$ for $m, n$ both in $\tfrac{\Z_{+}}{2}$,  or $m, n$ both in $-\tfrac{\Z_{+}}{2}$, or $m=n=0$. This is the main difference to the usual construction of bosonic-fermionic Fock space, and this noncommutativity will leave plenty of flexibility for us to encode the geometric data later.

Consider the complex one-dimensional vector space $\C\1$. For the formulas used in this paper, let $\k_\B$, $\k_\F$ act on $\C\1$ by the fixed scalars $\ell_\B=\ell_\F=1$, and let all positive and zero modes act as zero on $\C\1$. 
Then $\C\1$ is a module for $N\left(\widehat{\h}^{\B, \F, +, 0}_{2^{-1}\Z}\right)$. We therefore consider the induced module 
\begin{equation}
V=N\left(\widehat{\h}^{\B, \F}_{2^{-1}\Z}\right)\otimes_{N\left(\widehat{\h}^{\B, \F, +, 0}_{2^{-1}\Z}\right)} \C\1. 
\label{vector space}
\end{equation}
By a PBW-type theorem (\cite[Proposition~3.1]{FHQ2026}), we have the following isomorphism between vector spaces
\begin{equation}
    V\cong T\left(\widehat{\h}^{\B, \F, -}_{2^{-1}\Z}\right)\otimes \C\1.
\end{equation}
This is the vector space $V$ for our MOSVA. We denote the action of \(a\otimes t^m\) by \(a(m)\), which we call a \textit{mode}. Then $V$ is spanned by elements of the form
\begin{equation} \label{PBW-state}
    a_1\left(-m_1+\left|a_1\right| / 2\right) \cdots a_r\left(-m_r+\left|a_r\right| / 2\right) \mathbf{1}
\end{equation}
where $a_1, \ldots, a_r \in \mathfrak{h}^{\mathcal{B}} \cup \mathfrak{h}^{\mathcal{F}}, m_1, \ldots, m_r \in \mathbb{Z}_{+}, r \in \mathbb{N}$. Give the vacuum weight zero and set
\[
\operatorname{wt}\bigl(a_1(-m_1+|a_1|/2)\cdots a_r(-m_r+|a_r|/2)\1\bigr)
=\sum_{i=1}^r(m_i-|a_i|/2).
\]
Let \(\mathbf d_V\) act by these weights, which gives a nonnegative \(\frac{\Z}{2}\)-grading. The parity of \eqref{PBW-state} is \(\sum_i|a_i|\) modulo two, so its parity involution is \(\theta_V=e^{2\pi i\mathbf d_V}\). Moreover, for every $a \in \mathfrak{h}^{\mathcal{B}} \cup \mathfrak{h}^{\mathcal{F}}$, we denote
$$
a(x)=\sum_{n \in \mathbb{Z}} a(n+|a| / 2) x^{-n-1}
$$
which we will refer later to as a \textit{field}.  Next we define the vertex operator \(Y_{V}\). We define \(Y_{V}(\1, x)= \one_{V}\), and for \(a\in \h^\B \cup \h^\F\), \(m \in \Z_{+}\),
\[Y_V(a(-m+|a| / 2) \mathbf{1}, x) = \frac{1}{(m-1)!} \frac{d^{m-1}}{d x^{m-1}} a(x).\]
For $a_1, \ldots, a_n \in \mathfrak{h}^{\mathcal{B}} \cup \mathfrak{h}^{\mathcal{F}}, m_1, \ldots, m_n \in \mathbb{Z}_{+}$, we define
\begin{equation} \label{Bosonic-fermionic naive vo}
Y_V\left(a_1\left(-m_1+\left|a_1\right| / 2\right) \cdots a_n\left(-m_n+\left|a_n\right| / 2\right) \mathbf{1}, x\right)=\left.:\prod_{i=1}^n Y_V(a_i(-m_i+|a_i| / 2) \mathbf{1}, x): \right.
\end{equation}
 where $:\cdot :$ is the \textit{normal ordering}. For a state of homogeneous modes, use the unique stable permutation placing all negative modes (\(a(r)\) with \(r<0\), the physicists' \textit{creation operators}) on the left, all positive modes (\(r>0\), the \textit{annihilation operators}) in the middle, and all zero modes on the right. Preserve the original order within each of these three blocks, and multiply the resulting state by the sign of the permutation induced on its odd letters. This is the normal ordering of Section 3.2 in \cite{FHQ2026}. Then:
 \begin{thm}[Theorem~3.5, \cite{FHQ2026}]
    \label{Thm-fiberwise-mosva} \(\left(V, Y_{V}, \1\right)\) is a \(\frac{\Z}{2}\)-graded meromorphic open-string vertex algebra.
 \end{thm}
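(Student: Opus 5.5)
The plan is to follow the strategy Huang used for the bosonic construction in \cite{mosva} and run it through the super setting of \cite{FHQ2026}. The aim is to reduce every axiom to statements about the generating fields \(a(x)\), \(a\in\h^\B\cup\h^\F\).

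\textbf{Step 1: the easy axioms.} By the PBW-type isomorphism \(V\cong T(\widehat{\h}^{\B,\F,-}_{2^{-1}\Z})\otimes\C\1\), the weights are nonnegative, which gives the lower bound condition. The identity property holds by definition.

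For the creation property, I will use that in \eqref{Bosonic-fermionic naive vo} every annihilation mode and zero mode is placed to the right of all creation modes. Applied to \(\1\), these modes act as zero, so only creation modes survive. The coefficient of \(x^0\) reproduces the state \eqref{PBW-state}, because \(\frac{1}{(m-1)!}\frac{d^{m-1}}{dx^{m-1}}a(x)\) has constant term \(a(-m+|a|/2)\). The sign convention in the normal ordering is trivial here, since the creation block keeps its original order.

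For the \(\mathbf d\)-commutator formula, I will check it first on the single fields \(a(x)\), where it says that \(a(n)\) lowers weight by \(n\). It then propagates through derivatives and normal-ordered products, because \([\mathbf d_V,\cdot]\) is a derivation and \(\mathbf d_V\) is even.

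For \(D_V\), I will define it concretely as the even derivation-like operator with \([D_V,a(n)]=-n\,a(n-1)\) and \(D_V\1=0\). I will verify \([D_V,a(x)]=\frac{d}{dx}a(x)\), and then \(D\)-derivative and \(D\)-commutator follow termwise for normal-ordered products.

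\textbf{Step 2: rationality of products.} I will compute \(\langle v',Y_V(u_1,z_1)\cdots Y_V(u_n,z_n)v\rangle\) by moving annihilation modes to the right and creation modes to the left. The key point is which relations exist in \eqref{Bosonic-fermionic quotient}. Annihilation and creation modes satisfy the Heisenberg or Clifford relation with a central term, with a sign for odd–odd pairs. There are no relations among annihilation modes, among creation modes, or among zero modes.

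Because \(v'\) and \(v\) have bounded weight, only finitely many creation or annihilation modes can act nontrivially. Each contraction between a positive mode in \(Y(u_i,z_i)\) and a negative mode in \(Y(u_j,z_j)\) produces the familiar expansions of \((z_i-z_j)^{-2}\), their derivatives, or \((z_i-z_j)^{-1}\) for the fermions; for half-integral fermion modes this is still rational, since \(\sum_{k\ge0}z_2^{k}z_1^{-k-1}\) arises. Contractions with \(v\) and \(v'\) give poles at \(z_i=0\).

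So the correlation function is a finite signed sum of products of such rational functions. Noncommutativity only means that the surviving uncontracted words must appear in a prescribed order. I will handle this with a Wick-type formula in which the remaining annihilation and zero-mode words are paired against \(v\) in the order produced by the normal-ordering convention.

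\textbf{Step 3: rationality of iterates and associativity.} This is the main obstacle. I will first establish the weak associativity of \cite{mosva} for the generating fields: for \(a\in\h^\B\cup\h^\F\) and any \(u\),
\[
Y_V(a(-1+|a|/2)u,x)={:}a(x)Y_V(u,x){:}\ \text{(appropriately ordered)},
\]
together with the iterate formula for \(Y(a(n)u,x)\) as a residue of \(a(x_1)Y(u,x)\) expanded in the two regions. I will prove this by induction on the length of \(u\), using the explicit normal ordering. The careful part is keeping the stable-permutation signs for odd letters consistent, including the fact that zero modes exist only for bosons. Both iterates and products then reduce to the same finite Wick sums, and matching them as rational functions on \(|z_1|>|z_2|>|z_1-z_2|>0\) gives associativity.

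If the direct combinatorial identification proves too messy, I will fall back on Huang's approach. That approach verifies associativity for \(u_1=a(-m+|a|/2)\1\) and then passes to general \(u_1\) by induction using the iterate formula.
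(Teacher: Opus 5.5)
The paper itself gives no proof of this statement; it is quoted from \cite{FHQ2026}, so there is no in-text argument to compare with. Your Huang-style outline (Wick evaluation of products, then a direct comparison of iterates with products) is a workable route. However, it contains one concrete error, and its decisive step is not yet an argument.

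\textbf{The error in Step 1.} With the paper's field $a(x)=\sum_{n\in\Z}a(n+|a|/2)x^{-n-1}$, the identity $[D_V,a(x)]=\frac{d}{dx}a(x)$ forces $[D_V,a(r)]=-(r-|a|/2)\,a(r-1)$ for $r\in\Z+|a|/2$. Your rule $[D_V,a(n)]=-n\,a(n-1)$, read in the labels you use for the $\mathbf d$-commutator (where $a(n)$ lowers weight by $n$), is correct only for bosons. For example, it gives $D_Va(-\tfrac12)\1=\tfrac12a(-\tfrac32)\1$, whereas the defining formula $D_Vv=\lim_{x\to0}\frac{d}{dx}Y(v,x)\1$ gives $a(-\tfrac32)\1$. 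So the $D$-derivative property would fail on fermionic states. Also, $[D_V,a(1)]=-a(0)$ moves a bosonic letter from the annihilation block into the zero-mode block. Your termwise argument for normal-ordered products therefore needs the fact that zero modes commute with all nonzero modes and kill $\1$, hence act by $0$ on $V$.

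\textbf{The gap in Step 3.} Do not rely on a two-region iterate formula expressing $Y(a(n)u,x)$ through $a(x_1)Y(u,x)$ and $Y(u,x)a(x_1)$. In vertex algebra theory that formula comes from the Jacobi identity, i.e.\ from locality, which is unavailable here. On $V$ it does happen to hold, but proving it needs two facts you have not stated:
\begin{itemize}
\item positive modes act on $V$ by contraction, so they supercommute as operators there (though not in $N(\widehat{\h}^{\B,\F}_{2^{-1}\Z})$);
\item zero modes act by $0$ on $V$.
\end{itemize}

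What actually makes your direct identification work is simpler. Write the factors of $Y(u_1,z_1)Y(u_2,z_2)$ as sums of block-ordered monomials $C_1A_1Z_1$ and $C_2A_2Z_2$ (creation, annihilation, zero). Reordering to $C_1C_2A_1A_2Z_1Z_2$ uses only two kinds of relations in \eqref{Bosonic-fermionic quotient}:
\begin{itemize}
\item the positive--negative relations, which produce contractions. For letters $a(-m+|a|/2)$ of $u_1$ and $b(-n+|b|/2)$ of $u_2$, these are $\partial^{(m-1)}_{z_1}\partial^{(n-1)}_{z_2}$ applied to $(a,b)(z_1-z_2)^{-2}$, or to $(a,b)(z_1-z_2)^{-1}$ for fermions, with $\partial^{(k)}=\frac1{k!}\partial^k$;
\item the relations making zero modes commute with all nonzero modes.
\end{itemize}
The result is exactly the block order that normal ordering assigns in $Y(Y(u_1,x_0)u_2,x_2)$, where the letters of $u_1$ come first.

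The contraction terms match those produced by the positive-mode parts $a^+$ of the $u_1$-fields acting on $u_2$. For a bosonic pair, $[a^+(x_0),b(-n)]=n(a,b)x_0^{-n-1}$ agrees with $(a,b)\partial^{(n-1)}_{z_2}(z_1-z_2)^{-2}$ at $x_0=z_1-z_2$. For a fermionic pair, $(a,b)x_0^{-n}$ agrees with $(a,b)\partial^{(n-1)}_{z_2}(z_1-z_2)^{-1}$. The Taylor sums $\sum_kx_0^k\partial^{(k)}_{x_2}$ turn the uncontracted $u_1$-fields at $x_2$ into the fields at $x_2+x_0$, convergent for $|x_0|<|x_2|$. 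It remains to check that the normal-ordering signs reproduce the signs of moving odd positive modes through $u_2$.

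Your fallback induction on $u_1$ would additionally need associativity inside threefold products, which you do not address.

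\textbf{Minor points in Step 2.} Pairing with $v'$ gives polynomial factors, not poles. Also, infinitely many modes take part in field--field contractions; only those meeting $v$ or $v'$ are finitely many.
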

Next we construct modules for \(V\). We maintain our bosonic affinization \(\widehat{\h}^{\B}_{\Z}\), and consider the new fermionic affinization in this case:
 \begin{equation} \label{Fermion-mod-affinization}
        \widehat{\mathfrak{h}}^\F_\Z=\mathfrak{h}^\F \otimes  \mathbb{C}\left[t, t^{-1}\right]\oplus \C\k_\F.
    \end{equation}
As a vector space, we similarly have \(\widehat{\mathfrak{h}}^\F_\Z=\widehat{\mathfrak{h}}^{\F, -}_\Z\oplus \widehat{\mathfrak{h}}^{\F, 0}_\Z\oplus \widehat{\mathfrak{h}}^{\F, +}_\Z\), where 
\[\widehat{\mathfrak{h}}^{\F, \pm}_\Z=\mathfrak{h}^\F \otimes t^{\pm1}
 \mathbb{C}\left[t^{\pm 1}\right], \ \widehat{\h}^{\F, 0}_\Z = \h^\F\otimes\C t^0\oplus\C \k_\F.\]
Let \(\widehat{\h}^{\B, \F}_{\Z}\coloneqq \widehat{\h}^{\B}_{\Z}\oplus \widehat{\h}^{\F}_{\Z}\). Define \(N\left(\widehat{\h}^{\B, \F}_{\Z}\right)\) to be the quotient of \(T\left(\widehat{\h}^{\B, \F}_{\Z}\right)\) by the two-sided ideal generated by elements of the following form
\begin{equation}\label{relations module}
\begin{aligned}
    &\left[\k_\B, \k_\F\right], \\
    &\left[a\otimes t^m, \mathbf{k}_\B\right],  \   \text{ for } a\in \h^\B\cup \h^\F, \;m\in \Z,\\
    &\left[a\otimes t^{m}, \mathbf{k}_\F\right],  \   \text{ for } a\in \h^\B\cup \h^\F,\;m\in \Z,\\
    &\left[a \otimes  t^m , b \otimes  t^0\right],  \   \text{ for } a, b\in \h^\B\cup \h^\F, \; m\in \Z\setminus \{0\},\\
    &\left[a \otimes t^m, b\otimes t^{n} \right], \    \text{ for } a\in \h^\B, b\in \h^\F, \; m\in\Z_+,\; n\in -\Z_+,  \text{ or } m\in -\Z_+, \;n\in \Z_+, \\
    &\left[a \otimes t^m, b \otimes t^n\right]-m( a, b) \delta_{m+n, 0} \mathbf{k}_\B,   \   \text{ for } a, b\in \h^\B,  \; m\in\Z_+, \; n\in -\Z_+,\\
    &\left[ a \otimes t^{m}, b\otimes t^{n}\right]-(a, b) \delta_{m+n, 0} 
    \mathbf{k}_\F, \    \text{ for } a, b\in \h^\F,  \; m\in\Z_+,\; n\in -\Z_+.
\end{aligned}
\end{equation}
Note that again, there are no commutator relations between $a \otimes t^m$ and $b \otimes t^n$ for $m, n$ both in $\Z_{+}$,  or $m, n$ both in $-\Z_{+}$, or $m=n=0$. Let \(H\) be a left \(T(\h^\B\oplus\h^\F)\)-module. To supply
the lift of \(\theta\) required by Definition \ref{twisted-mod}, we
take a compatible grading \(H=H^{[0]}\oplus H^{[1]}\). Its involution
\(\theta_H\) satisfies that for all \(a\in\h^\B\cup\h^\F\),
\[
\theta_H a=(-1)^{|a|}a\theta_H.
\]
Such a \(\theta_H\) will be supplied by the spinor grading
in Section \ref{subsection-sheaves of twisted modules}. Let each zero mode \(a(0)\) act on \(H\) by the given action of \(a\), let all strictly positive physical modes act as zero on \(H\), and let $\k_\B$, $\k_\F$ act on $H$ by the same $\ell_\B=\ell_\F=1$. Then $H$ is a module for $N\left(\widehat{\h}^{\B, \F, +, 0}_{\Z}\right)$. Consider then the induced module
\[W=N\left(\widehat{\h}^{\B, \F}_{\Z}\right)\otimes_{N\left(\widehat{\h}^{\B, \F, +, 0}_{\Z}\right)} H.\]
Write \(\widehat{\h}^{\B,\F,-}_\Z=\widehat{\h}^{\B,-}_\Z\oplus\widehat{\h}^{\F,-}_\Z=(\h^\B\oplus\h^\F)\otimes t^{-1}\C[t^{-1}]\). By a PBW-type theorem (\cite[Proposition 4.1]{FHQ2026}), we have the following isomorphism between vector spaces
\[W \cong T(\widehat{\h}^{\B,\F,-}_\Z) \otimes H.\]
For \(m_i\in\Z_+\), give \(a_1(-m_1)\cdots a_r(-m_r)\otimes u\)
weight \(m_1+\cdots+m_r\), and let \(\mathbf d_W\) be the resulting
weight operator. Define the parity involution
\[
\theta_W\bigl(a_1(-m_1)\cdots a_r(-m_r)\otimes u\bigr)
=(-1)^{\sum_i|a_i|}a_1(-m_1)\cdots a_r(-m_r)\otimes\theta_Hu.
\]
Moreover, for every $a \in \mathfrak{h}^{\mathcal{B}} \cup \mathfrak{h}^{\mathcal{F}}$, we denote
\[
a_W(x)=\sum_{n\in\Z}a(n)x^{-n-1+|a|/2}.
\]
To define the vertex operator acting on \(W\), we introduce the twisted normal ordering \(\typecolon\cdot\typecolon\). It is given by the
same negative-positive-zero permutation and multiplying its sign as
above, but normal ordering of the final block of zero modes is defined via a Clifford-like recursion. For explicit details, see \cite[Section 4.2, equation (20)]{FHQ2026}.

Now we first define the \textit{naive} vertex operator by
\[
\bar Y_W(\1,x)=\one_W,\ 
\bar Y_W(a(-m+|a|/2)\1,x)
=\frac{1}{(m-1)!}\frac{d^{m-1}}{dx^{m-1}}a_W(x)
\]
for \(m\in\Z_+\). Then we set
\[
\begin{aligned}
&\bar Y_W\bigl(a_1(-m_1+|a_1|/2)\cdots a_n(-m_n+|a_n|/2)\1,x\bigr)\\
&\ =\typecolon\prod_{i=1}^n
\left(\frac{1}{(m_i-1)!}\frac{d^{m_i-1}}{dx^{m_i-1}}(a_i)_W(x)\right)\typecolon.
\end{aligned}
\]
This naive field generally fails associativity. Therefore a correction needs to be introduced as follows (\cite[Section~4.3]{FHQ2026}). Let $e_1,\ldots,e_d$ be an orthonormal basis of $\h^\F$. Define the following operator on \(V\)
\[\Delta(x)=\sum_{i=1}^d \sum_{m,n\geqslant 1} \frac 1 2 C_{mn}e_i\left(m-\tfrac{1}{2}\right) e_i\left(n-\tfrac{1}{2}\right)x^{-m-n+1}\]
where 
\[C_{m n}=\frac{1}{2} \frac{m-n}{m+n-1}\binom{-\tfrac{1}{2}}{m-1}\binom{-\tfrac{1}{2}}{n-1}.\]
Since the quadratic sum contracts the inverse fermionic form,
\(\Delta(x)\) is independent of the orthonormal basis. Define the corrected field
\(Y_W:V\otimes W\to W((x^{1/2}))\) by
\begin{equation}\label{Real bosonic-fermonic vo}
    Y_{W}(u,x)\coloneqq \bar{Y}_{W}\left(\exp\left(\Delta(x)\right)u, x\right).
\end{equation}
\begin{thm}[cf. Theorem~4.3 of \cite{FHQ2026}]
\label{bosonic-fermionic fiberwise module}
\((W,Y_W,\mathbf d_W,\theta_W)\) is a lower-bounded canonically
twisted module for \((V,Y_V,\1)\).
\end{thm}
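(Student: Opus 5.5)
This statement is cited from \cite[Theorem~4.3]{FHQ2026}, so the argument I would give follows the structure of that proof; I record only the adjustments needed for the present normalizations ($\ell_\B=\ell_\F=1$ and the explicit lift $\theta_H$). The plan is to verify the axioms of Definition~\ref{twisted-mod} with $g=\theta_V$ and $k=2$ in increasing order of difficulty.

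\textbf{Grading, $\theta$-compatibility and identity.} The PBW isomorphism $W\cong T(\widehat{\h}^{\B,\F,-}_\Z)\otimes H$ shows that $\mathbf d_W$ is diagonalizable with weights in $\N$. This gives the lower bound condition. The $\mathbf d$-commutator formula is first checked on the generating fields $a_W(x)$, where it is immediate from $[\mathbf d_W,a(n)]=-n\,a(n)$. It then propagates through derivatives, through the twisted normal ordering (which only permutes modes and multiplies by signs), and through $\exp(\Delta(x))$. For the last step I use that $\Delta(x)$ is homogeneous: the term with coefficient $x^{-m-n+1}$ lowers weight by $m+n-1$. For $\theta_W Y_W(v,x)=Y_W(\theta_V v,x)\theta_W$, note that every mode $a(n)$ satisfies $\theta_W a(n)=(-1)^{|a|}a(n)\theta_W$. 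On $H$ this is exactly the hypothesis on $\theta_H$; on the negative-mode part it holds by definition. Moreover $\Delta$ is even. The identity property holds because $\Delta(x)\1=0$, since positive modes kill $\1$.

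\textbf{Rationality and associativity.} This is the main step. I would first reduce to generators. Products of the fields $a_W(x)$ can be computed by moving positive modes to the right using the relations \eqref{relations module}. Only the bosonic--bosonic and fermionic--fermionic commutators between positive and negative modes contribute scalars. The zero modes act through the $T(\h^\B\oplus\h^\F)$-action on $H$, with no commutativity imposed.

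For fermions, the $x^{-n-1+1/2}$ shift produces pairings of the form $(a,b)\,x_1^{1/2}x_2^{-1/2}/(x_1-x_2)$, together with half-integral powers that are consistent with the $z_i^{k_i/2}$ factors. Consequently any matrix coefficient $\langle w',Y_W(v_1,z_1)\cdots Y_W(v_n,z_n)w\rangle$ is a finite sum of polynomials in the $z_i^{\pm1}$, times $z_i^{|v_i|/2}$, times powers of $(z_i-z_j)^{-1}$. This gives generalized rationality for products; the iterate is handled in the same way.

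Associativity must then be proved by comparing $Y_W(u_1,z_1)Y_W(u_2,z_2)$ with $Y_W(Y_V(u_1,z_1-z_2)u_2,z_2)$. I would expand both sides by a Wick-type formula with contractions. Purely negative/positive contractions match those appearing in $Y_V$ in Theorem~\ref{Thm-fiberwise-mosva}, because the commutation relations agree there. The zero-mode block matches because the ordering of zero modes is preserved on both sides and they never commute past each other.

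\textbf{Main obstacle.} The delicate point is the mismatch between the $\Z+\tfrac12$-moded fermions in $V$ and the $\Z$-moded fermions in $W$. The fermionic two-point function of $V$ is $(z_1-z_2)^{-1}$, whereas the naive $\bar Y_W$ gives $\tfrac12\bigl((z_1/z_2)^{1/2}+(z_2/z_1)^{1/2}\bigr)(z_1-z_2)^{-1}$. The difference between these is regular at $z_1=z_2$. It must be absorbed by the correction $\exp(\Delta(x))$, which contracts pairs of fermionic positive modes in the state.

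Here I would follow \cite[Section~4.3]{FHQ2026} closely. The generating function identity
\[
\sum_{m,n\ge1}C_{mn}\,x_0^{m-1}x_0^{n-1}\ \leftrightarrow\ \frac{(1+x_0/x_2)^{1/2}+(1+x_0/x_2)^{-1/2}}{2x_0}-\frac1{x_0}
\]
(suitably expanded) shows that the quadratic correction compensates exactly the fermionic contraction difference for states of fermion number two. For general states, $\Delta$ is a sum of pair contractions and the exponential enumerates all pairings. This reduces associativity to the two-fermion case together with the combinatorics of the twisted normal ordering's Clifford-like recursion on zero modes. Checking that this recursion is compatible with the correction, including the signs and the $\tfrac12(a,b)$ terms from $[a(0),b(0)]$ being absent, is where I expect most of the work to lie.
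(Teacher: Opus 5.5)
Your proposal follows the paper, which gives no argument of its own for this statement and takes it from \cite[Theorem~4.3]{FHQ2026} (with $\ell_\B=\ell_\F=1$ and the lift $\theta_H$). Your checks of the lower bound, the $\mathbf d$-commutator formula via $[\mathbf d_V,\Delta(x)]=x\frac{d}{dx}\Delta(x)$, $\theta$-compatibility and the identity property are correct. One inaccuracy there: the twisted normal ordering does not merely permute modes and insert signs. Its zero-mode recursion also subtracts $\frac12(a,b)$ times shorter zero-mode words. These terms have weight zero and carry the same power of $x$, so your conclusion is unaffected.

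The formulas you supply for the hard step need fixing.

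\emph{The fermionic contraction.} The positive--negative fermionic contraction in $a_W(x_1)b_W(x_2)$ is
\[
(a,b)\sum_{n\geq1}x_1^{-n-1/2}x_2^{n-1/2}=(a,b)\,\frac{x_1^{-1/2}x_2^{1/2}}{x_1-x_2},
\]
not $x_1^{1/2}x_2^{-1/2}/(x_1-x_2)$. Only this, together with $\frac12(a,b)x_1^{-1/2}x_2^{-1/2}$ from $a(0)b(0)=\typecolon a(0)b(0)\typecolon+\frac12(a,b)$, gives the symmetric kernel you quote.

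\emph{The generating identity.} More seriously, the left side of your identity, $\sum_{m,n\geq1}C_{mn}x_0^{m-1}x_0^{n-1}$, is identically zero because $C_{nm}=-C_{mn}$. Your right side is in fact $\sum_{n\geq1}C_{1n}x_0^{n-1}x_2^{-n}$, which settles only the case $u_1=a(-\frac12)\1$, $u_2=b(-\frac12)\1$. The identity that actually drives the argument has two variables:
\[
\sum_{m,n\geq1}C_{mn}\,y_1^{m-1}y_2^{n-1}x^{-m-n+1}
=\frac{1}{y_1-y_2}
-\frac{(x+y_1)^{1/2}(x+y_2)^{-1/2}+(x+y_1)^{-1/2}(x+y_2)^{1/2}}{2(y_1-y_2)},
\]
expanded in nonnegative powers of $y_1,y_2$. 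The right side is the expansion at $(x,x)$ of a single regular function of $(z_1,z_2)=(x+y_1,x+y_2)$. Because of this, it controls two things at once:
\begin{enumerate}
\item the cross-contractions between letters of $u_1$ and $u_2$, of arbitrary derivative orders;
\item the agreement between the pairs contracted by $\Delta(z_1)$ inside $u_1$ on the product side and the pairs contracted by $\Delta(z_2)$ among the translated letters of $Y_V(u_1,z_1-z_2)u_2$ on the iterate side.
\end{enumerate}
The second point amounts to $[D_V,\Delta(x)]=-\frac{d}{dx}\Delta(x)$, i.e.\ $Y_W(D_Vu,x)=\frac{d}{dx}Y_W(u,x)$.

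Your reduction to the two-fermion case is valid only with this two-variable identity, not with the single row. The remaining zero-mode bookkeeping is then what both you and the paper take from \cite[Section~4.3]{FHQ2026}.
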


\begin{remark}
This construction is not the tensor product of the purely bosonic and purely
fermionic algebras in \cite{mosva,fiordalisi2024fermionic}. It is exactly this freedom that allows the geometric interaction stated later.
\end{remark}

\section{A bosonic-fermionic geometric construction}\label{section-geometric-construction-1}

\subsection{Holonomy principle for transitive anchored vector bundles}\label{subsec-Holonomy principle for transitive anchored vector bundles}

\begin{definition}[Anchored vector bundle]
An \emph{anchored vector bundle} over a manifold $M$ is a vector bundle
\(
    A\to M
\)
together with a vector bundle morphism
\(
    \rho:A\to TM,
\)
called the \emph{anchor}. We say that $A$ is \emph{transitive} if $\rho$ is
surjective.
\end{definition}

Let \(A\) be an anchored vector bundle, and denote \(A^*\) the dual vector bundle of \(A\). We have the map \(d_A: C^{\infty}(M) \rightarrow \Gamma\left(A^*\right)\), given by \(\left(d_A f\right)(X)=\L_{\rho(X)} f\) for all \(X \in \Gamma(A)\), where \(\L_{\rho(X)} f\) is the Lie derivative of \(f\) along the vector field \(\rho(X)\).
\begin{definition}[\(A\)-connection]
    Let \(F\to M\) be a vector bundle. An \textit{\(A\)-connection} on \(F\) consists of a linear map
    \[\nabla: \Gamma(F)\to \Gamma(A^*\otimes F)\]
    satisfying the Leibniz rule
    \[
\nabla(f s)=d_A f \otimes s+f \nabla s
\]
for all \(f\in C^\infty(M)\) and \(s\in\Gamma(F)\).
Equivalently, evaluation in the covector slot defines covariant derivatives
\(\nabla_Xs\) with
\[
\nabla_{fX}s=f\nabla_Xs,\quad
\nabla_X(fs)=(\rho(X)f)s+f\nabla_Xs.
\]
We use both descriptions of the same connection.
\end{definition}

\begin{definition}[$A$-path]
Let \(I=[0,1]\subset \R\). An \emph{$A$-path} is a smooth curve
\(
    a:I\to A
\)
covering a smooth curve
\(
    \gamma:I\to M
\)
such that
\[
    \rho(a(t))=\dot\gamma(t)
\]
for all $t\in I$.
\end{definition}

Let $a:I\to A$ be an $A$-path covering $\gamma:I\to M$. Denote \(\gamma
^* F\to I\) the pullback of \(F\) along \(\gamma\). The $A$-connection
$\nabla$ induces a covariant derivative
\[
    D_t^a:\Gamma(\gamma^*F)\to \Gamma(\gamma^*F)
\]
along $a$. Locally, choose frames $\{e_\alpha\}$ of $A$ and $\{s_i\}$ of $F$.
Write
\[
    a(t)=a^\alpha(t)e_\alpha(\gamma(t)),
    \ 
    u(t)=u^i(t)s_i(\gamma(t)).
\]
With connection coefficients
\[
    \nabla_{e_\alpha}s_j=\Gamma_{\alpha j}^{\,i}s_i,
\]
we have
\[
    D_t^a u
    =
    \left(
        \frac{d u^i}{dt}
        +
        a^\alpha(t)u^j(t)\Gamma_{\alpha j}^{\,i}(\gamma(t))
    \right)s_i(\gamma(t)).
\]
This formula is independent of the chosen local frames.

\begin{definition}[Parallel transport]
A section $u(t)$ of $\gamma^*F$ is \emph{parallel along $a$} if for all \(t\in I\),
\[
    D_t^a u=0.
\]
By the existence and uniqueness theorem for linear ordinary differential
equations, for every $v\in F_{\gamma(0)}$ there is a unique parallel section
$u(t)$ with $u(0)=v$. Thus $a$ determines a linear isomorphism
\[
    P_a:F_{\gamma(0)}\to F_{\gamma(1)},
\]
called \emph{parallel transport along $a$}.
\end{definition}

We use smooth reparametrizations to concatenate anchored paths. If
\(\phi:[0,1]\to[0,1]\) is smooth and nondecreasing, with
\(\phi(0)=0\) and \(\phi(1)=1\), put
\[
\gamma^\phi(t)=\gamma(\phi(t)),\ 
a^\phi(t)=\phi'(t)a(\phi(t)).
\]
Then \(\rho(a^\phi)=\dot\gamma^\phi\), and the local transport equation gives
\[
D_t^{a^\phi}(u\circ\phi)
=\phi'(t)(D_r^a u)(\phi(t)).
\]
Consequently \(P_{a^\phi}=P_a\). We may choose \(\phi\) constant near both
endpoints (then \(a^\phi\) vanishes there). For composable paths \(a,b\),
the notation \(b*a\) means first following \(a^\phi\), then \(b^\phi\),
with lifts \(2a^\phi(2t)\) on \(0\leq t\leq\tfrac12\) and
\(2b^\phi(2t-1)\) on \(\tfrac12\leq t\leq1\). These lifts join smoothly,
and \(P_{b*a}=P_bP_a\). The reversed lift is
\(a^{-1}(t)=-a(1-t)\), covering \(\gamma(1-t)\), and
\(P_{a^{-1}}=P_a^{-1}\). Together with the zero lift over a constant base
path, these operations justify the group in the following definition.

\begin{definition}[Holonomy group of an $A$-connection]
Fix $p\in M$. An \emph{$A$-loop at $p$} is an $A$-path whose base curve \(\gamma\) begins
and ends at $p$. The \emph{holonomy group} of $\nabla$ at $p$ is
\[
    \Hol_p^A(\nabla)
    :=
    \{P_a\in \Aut(F_p)\mid a \text{ is an $A$-loop at }p\}.
\]
\end{definition}
Next, we discuss the holonomy principle for transitive anchored vector bundle connections. Let
\[
    \Pi(F)
    :=
    \{s\in \Gamma(F)\mid \nabla_as=0
      \text{ for all }a\in \Gamma(A)\}
\]
be the space of global $\nabla$-parallel sections. Let
\[
    F_p^{\Hol_p^A(\nabla)}
    :=
    \{v\in F_p\mid h(v)=v\text{ for all }h\in \Hol_p^A(\nabla)\}
\]
be the fixed-point subspace of the holonomy representation at $p$.

\begin{thm}[Holonomy principle for transitive anchored bundles]
    \label{holonomy principle}
Let $(A,\rho)$ be a transitive anchored vector bundle over a connected manifold
$M$, and let $\nabla$ be an $A$-connection on a vector bundle $F\to M$. Then for
every $p\in M$, the evaluation map
\[
    \ev_p:\Pi(F)\to  F_p^{\Hol_p^A(\nabla)},
    \  
    s\mapsto s(p),
\]
is an isomorphism.
\end{thm}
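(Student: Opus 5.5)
The argument follows the classical holonomy principle, with $A$-paths in place of ordinary curves; transitivity is what makes $A$-paths as flexible as curves in $M$.

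\textbf{Well-definedness and injectivity.} First I show that $\ev_p$ lands in the fixed-point space. If $s\in\Pi(F)$ and $a$ is any $A$-path covering $\gamma$, then $u(t)=s(\gamma(t))$ satisfies $D^a_t u=(\nabla_{a(t)}s)(\gamma(t))=0$. In local frames this is the identity $\frac{d}{dt}s^i(\gamma(t))=a^\alpha\,\rho(e_\alpha)(s^i)$, which follows from $\rho(a)=\dot\gamma$. Hence $P_a(s(\gamma(0)))=s(\gamma(1))$, and for $A$-loops at $p$ this gives $s(p)\in F_p^{\Hol_p^A(\nabla)}$. For injectivity, let $q\in M$ be arbitrary. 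Since $M$ is connected, choose a smooth path $\gamma$ from $p$ to $q$. Using surjectivity of $\rho$, pick a splitting $\sigma:TM\to A$ with $\rho\circ\sigma=\mathrm{id}$; one exists locally, and globally via a fiber metric on $A$ taking the orthogonal complement of $\ker\rho$. Then $a(t)=\sigma(\dot\gamma(t))$ is an $A$-path covering $\gamma$. The previous identity gives $s(q)=P_a s(p)$, so $s(p)=0$ forces $s\equiv 0$.

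\textbf{Surjectivity.} Given $v\in F_p^{\Hol}$, I define $s(q)=P_a v$ for any $A$-path $a$ from $p$ to $q$; such paths exist by the construction above. Well-definedness follows because two paths $a,b$ from $p$ to $q$ give the loop $b^{-1}*a$, and $P_b^{-1}P_a\in\Hol_p^A$ fixes $v$, using the composition rules established before the definition of the holonomy group.

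The main obstacle is showing that $s$ is smooth and parallel. For smoothness near a point $q_0$, I fix one path $a_0$ from $p$ to $q_0$. On a coordinate ball around $q_0$, I concatenate $a_0$ with the radial lifts $c_q(t)=\sigma(\tfrac{d}{dt}(q_0+t(q-q_0)))$. Then $s(q)=P_{c_q}P_{a_0}v$, and $P_{c_q}$ depends smoothly on $q$ by smooth dependence of linear ODE solutions on parameters.

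For parallelism, take $X\in A_q$. Choose a short base curve $\gamma$ through $q$ together with an $A$-path $a$ covering it with $a(0)=X$; for instance, extend $X$ to a local section $\tilde X$ and take the integral curve of $\rho(\tilde X)$ with $a(t)=\tilde X(\gamma(t))$. Along this curve, $s(\gamma(t))=P_{a|_{[0,t]}}s(q)$ by well-definedness, since we may append pieces of $a$ to a path reaching $q$. This function is therefore parallel along $a$, so $0=D^a_t s|_{t=0}=\nabla_X s(q)$. The care needed here is that $\nabla_X s$ at $q$ depends only on the first-order behavior along an $A$-path with initial value $X$. This is exactly the frame formula for $D^a_t$, which I will invoke directly.
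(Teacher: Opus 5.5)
Your proposal is correct and follows essentially the same route as the paper's proof. You show fixed-point membership by parallelism along $A$-loops, obtain injectivity and the candidate section by transporting along anchored lifts of base paths, get path-independence from the loop $b^{-1}*a$, prove smoothness via lifted straight-line paths in a coordinate ball, and prove parallelism by following a short $A$-path with initial value $X$. The differences are cosmetic: you build the anchored lift from a global splitting $\sigma$ coming from a fiber metric on $A$ (valid, since $\ker\rho$ is a subbundle when $\rho$ is surjective) where the paper builds a right inverse of $\gamma^*A\to\gamma^*TM$ along each path by a partition of unity, and you conclude $\nabla_Xs(q)=0$ from the frame identity $D^a_t(s\circ\gamma)=(\nabla_{a(t)}s)(\gamma(t))$ applied to the now-smooth $s$ rather than by differentiating $P_{b_\epsilon}$ at $\epsilon=0$.
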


\begin{proof}
First, let $s\in \Pi(F)$. If $a$ is an $A$-loop at $p$, then
the restriction of $s$ to the base curve of $a$ is parallel along $a$.
Hence
\(
    P_a(s(p))=s(p).
\)
Therefore
\(
    \ev_p(s)\in F_p^{\Hol_p^A(\nabla)}.
\)

Next, we show the injectivity of \(\ev_p\). Suppose $s_1,s_2\in\Pi(F)$ and
\(
    s_1(p)=s_2(p).
\)
Let
\(
    \delta:=s_1-s_2.
\)
Then $\delta$ is parallel and $\delta(p)=0$. Since $A$ is transitive,
$\rho:A\to TM$ is surjective. For a smooth path \(\gamma\) in \(M\), the
pullback surjection \(\gamma^*A\to\gamma^*TM\) has a smooth right inverse:
combine local right inverses using a partition of unity on \([0,1]\).
Applying it to \(\dot\gamma\) gives a smooth anchored lift. Because \(M\)
is connected, every \(q\in M\) can be joined to \(p\) by such a path,
with an \(A\)-path \(a\) from \(p\) to \(q\). Since \(\delta\) is parallel,
\[
    \delta(q)=P_a(\delta(p))=P_a(0)=0.
\]
Thus $\delta=0$, so $s_1=s_2$. Hence $\ev_p$ is injective.

It remains to prove surjectivity. Let
\(
    v\in F_p^{\Hol_p^A(\nabla)}.
\)
For any $q\in M$, choose an $A$-path $a$ from $p$ to $q$ and define
\(
    s(q):=P_a(v).
\) 
If \(a_1,a_2:p\to q\), then \(a_2^{-1}*a_1\) is a loop at \(p\). Then
\[
P_{a_2}^{-1}P_{a_1}v=v,\quad P_{a_1}v=P_{a_2}v.
\]
Thus \(s\) is independent of the chosen path.

We now prove that $s$ is smooth. Fix $q_0\in M$. Choose a coordinate
neighborhood $U$ of $q_0$ over which the surjective bundle map
$\rho:A\to TM$ admits a smooth right inverse
\[
    \sigma:TU\to A|_U,
    \  
    \rho\circ\sigma=\one_{TU}.
\]
After possibly shrinking $U$, choose smoothly varying smooth paths
\[
    \gamma_q:[0,1]\to U,
    \  
    \gamma_q(0)=q_0,\   \gamma_q(1)=q,
\]
depending smoothly on $q\in U$. (For instance, in a coordinate ball, take the
straight-line paths in coordinates.) Define the lifted $A$-paths
\[
    b_q(t):=\sigma_{\gamma_q(t)}(\dot\gamma_q(t)).
\]
Then $b_q$ depends smoothly on the parameter $q$, and
\[
    s(q)=P_{b_q}(s(q_0))
\]
by the path-independence already proved. Smooth dependence of solutions of
linear ordinary differential equations on parameters implies that $q\mapsto P_{b_q}(s(q_0))$ is smooth.
Thus $s$ is smooth.

Finally, we show that $s$ is $\nabla$-parallel. Let $e\in\Gamma(A)$ and
let $q\in M$. Let \(\eta(r)\) be the integral curve of \(\rho(e)\) with
\(\eta(0)=q\). For sufficiently small real \(\epsilon\), the short
\(A\)-path from \(q\) to \(\eta(\epsilon)\), parametrized on \([0,1]\), is
\[
    b_\epsilon(t):=\epsilon e(\eta(\epsilon t)),
    \  0\leq t\leq1.
\]
Its anchor is \(\tfrac{d}{dt}\eta(\epsilon t)\). Path-independence gives
\[
    s(\eta(\epsilon))=P_{b_\epsilon}(s(q)).
\]
The right-hand side is the value at \(r=\epsilon\) of the parallel
solution along \(r\mapsto e(\eta(r))\) with initial value \(s(q)\).
Differentiating at \(\epsilon=0\), including when \(\rho(e)=0\), gives
\(
    (\nabla_e s)(q)=0.
\) 
Since $e$ and $q$ were arbitrary,
\(
    \nabla_e s=0
\) 
for all $e\in\Gamma(A)$. Hence $s\in\Pi(F)$, and
$\ev_p$ is surjective. Therefore $\ev_p$ is an isomorphism.
\end{proof}

\begin{remark}
    Although the above discussion is done for the space of global sections over a connected manifold, it clearly holds for the space of sections over any connected open subset \(U\) of a manifold that is not necessarily connected. In these cases we have the isomorphism written as
    \begin{equation}
    \ev_p: \Pi_U(F) \cong F_p^{\Hol_p^A(\nabla, U)}.
    \end{equation}
\end{remark}
\begin{remark}
    The proof uses only the anchor and connection, so it applies to
Courant algebroids without using their bracket. No relation between
curvature and holonomy is needed here.
\end{remark}

\subsection{\texorpdfstring{A sheaf of \(\frac{\Z}{2}\)-graded MOSVAs}{A sheaf of half-integer-graded MOSVAs}}
\label{subsection-bosonic-fermionic construction}
\begin{definition}[Courant algebroid] A \textit{Courant algebroid} on a manifold $M$ is a vector bundle $E \rightarrow M$ equipped with a nondegenerate symmetric bilinear form $h(\cdot, \cdot) \in \Gamma\left(\operatorname{Sym}^2\left(E^*\right)\right)$ (called the \textit{Courant pairing}, or the \textit{Courant pseudo-metric}), a bilinear operation $\circ$ (called the \textit{Dorfman bracket}) on the space of smooth sections $\Gamma(E)$ of $E$, and a homomorphism of vector bundles $\rho: E \rightarrow T M$ (called the anchor), such that the following conditions are satisfied: for all $e_1,e_2,e_3\in\Gamma(E)$ and $f\in C^\infty(M)$,
\begin{enumerate}[label=(C{\arabic*})]
    \item $e_1 \circ (e_2 \circ e_3)
    = (e_1 \circ e_2) \circ e_3
    + e_2 \circ (e_1 \circ e_3)$;

    \item $\rho(e_1 \circ e_2)
    = [\rho(e_1), \rho(e_2)]$;

    \item $e_1 \circ (f e_2)
    = \left(\L_{\rho(e_1)}f \right)e_2 + f(e_1 \circ e_2)$;

    \item $\rho(e_1)h (e_2, e_3)
    = h(e_1 \circ e_2, e_3)
    + h(e_2, e_1 \circ e_3)$;

    \item $e_1 \circ e_2+e_2 \circ e_1=D h(e_1, e_2)$, where $D: C^{\infty}(M) \rightarrow \Gamma(E)$ is defined by $h(D f, e)=\L_{\rho(e)} f$ for all \(e\in \Gamma(E)\).
\end{enumerate}
A Courant algebroid \(E\) is called \emph{regular} if its anchor is of locally constant rank, and in particular \emph{transitive} if \(\rho\) is surjective, i.e. \(E\) is a transitive anchored vector bundle (see Section \ref{subsec-Holonomy principle for transitive anchored vector bundles}). 
\end{definition}

\begin{definition}[Generalized metric]
A \textit{generalized metric} on a Courant algebroid
\(E\) is a bundle isomorphism
\(\mathcal G:E\to E\) such that
\[
\mathcal G^2=\one_E,\ 
h(\mathcal G u,v)=h(u,\mathcal G v),
\]
and such that the symmetric bilinear form
\[G(u,v):=h(\mathcal G u,v)
\]
is positive definite.
\end{definition}

The \(\pm1\)-eigenbundles of \(\mathcal G\) give an \(h\)-orthogonal splitting
\(
E=E_+\oplus E_-\), such that
\(\mathcal G|_{E_+}=+\one, 
\mathcal G|_{E_-}=-\one,
\)
with \(h|_{E_+}>0\) and \(h|_{E_-}<0\). In fact, a generalized metric on \(E\) is equivalent to determining a splitting \(
E=E_+\oplus E_-\) above, so some authors call \(E_+\) a generalized metric by abuse of notation.

Now given a splitting \(
E=E_+\oplus E_-
\), let us choose locally an \(h\)-orthonormal frame adapted to it, i.e.,  
\(
\{e_i^+\}_{i=1}^{r_+}\subset \Gamma(E_+)\), and \(\{e_\alpha^-\}_{\alpha=1}^{r_-}\subset \Gamma(E_-),
\) 
such that
\[
h(e_i^+,e_j^+)=\delta_{ij},\ 
h(e_\alpha^-,e_\beta^-)=-\delta_{\alpha\beta},\ h(e_i^+,e_\alpha^-)=0.
\]
Equivalently, when we do not wish to distinguish the two eigenbundles,
we write the combined frame as \(\{e_a\}\), with
\[
h(e_a,e_b)=\epsilon_a\delta_{ab},
\ 
\mathcal G e_a=\epsilon_a e_a,
\]
where
\[
\epsilon_a=
\begin{cases}
+1,& e_a\in E_+,\\
-1,& e_a\in E_-.
\end{cases}
\]
The frame \(\{e_a\}\) is \(G\)-orthonormal:
\(
G(e_a,e_b)=\delta_{ab}.
\)
We shall use the adapted notation
\(\{e_i^+,e_\alpha^-\}\) and the compressed notation
\(\{e_a\}\) interchangeably.

With the ordinary dual coframe \(\{e^a\}\), the pairings and their
inverse tensors are
\[
\begin{aligned}
h&=\sum_a\epsilon_a e^a\otimes e^a,
& h^{-1}&=\sum_a\epsilon_a e_a\otimes e_a,\\
G&=\sum_a e^a\otimes e^a,
& G^{-1}&=\sum_a e_a\otimes e_a.
\end{aligned}
\]
These formulas also describe their complex-bilinear extensions as we shall use later. 
\begin{definition}[Metric generalized connection]
    A \textit{generalized connection} on a Courant algebroid \(E\)
is an \(E\)-connection on \(E\) preserving \(h\). Given a generalized
metric \(\mathcal G\), it is \textit{metric} if \(\nabla\mathcal G=0\),
equivalently if it preserves both eigenbundles \(E_+\) and \(E_-\).
\end{definition}

\begin{definition}[Torsion of a generalized connection] 
    The torsion of a generalized connection \(\nabla\) is the three-form
\(T\in\Gamma(\Lambda^3E^*)\). Identifying it with an \(E\)-valued
two-form by \(T(e_1,e_2,e_3)=h(T(e_1,e_2),e_3)\), it is given by
\begin{equation}
    T(e_1, e_2):=\nabla_{e_1} e_2-\nabla_{e_2} e_1-e_1 \circ e_2+\left(\nabla e_1\right)^* e_2
\end{equation}
where, for fixed \(e_1\),
\(\nabla e_1\in\Gamma(E^*\otimes E)=\Gamma(\operatorname{End}E)\)
is the endomorphism \(e\mapsto\nabla_e e_1\), and
\((\nabla e_1)^*\) denotes its adjoint with respect to the bilinear
Courant pairing \(h\). \(\nabla\) is called \textit{torsion-free} if \(T=0\). \(\nabla\) is called a \textit{generalized Levi--Civita connection}, if it is metric for the chosen \(\G\) and torsion-free.
\end{definition}
Unlike in classical Riemannian geometry, a generalized metric does not
uniquely determine a torsion-free compatible connection. Throughout the
abstract construction, \(\nabla^E\) denotes a chosen generalized
Levi--Civita connection, and the algebra of parallel sections and their modules
depend on this choice. For the positive-isotropy, metric-adapted model in
Section \ref{Section-example}, we take the natural connection of
\cite{GarciaFernandezRubioTipler2017StromingerModuli}. Its canonicity means
equivariance under Courant isomorphisms carrying the generalized metric
along (not uniqueness among all compatible torsion-free connections).

The construction of the sheaf of MOSVAs uses the following data.
\begin{enumerate}
    \item An \(n\)-dimensional smooth manifold \(M\).
    \item A (bosonic) transitive Courant algebroid \(E^\B\) with a generalized metric, and a (fermionic) copy \(E^\F\) with a specified Courant algebroid isomorphism \(\iota:E^\B\to E^\F\). Thus \(\rho^\F\iota=\rho^\B\) and \(h^\F(\iota Y,\iota Z)=h^\B(Y,Z)\). We transport the generalized metric by \(\mathcal G^\F=\iota\mathcal G^\B\iota^{-1}\), so \(G^\F(\iota Y,\iota Z)=G^\B(Y,Z)\). The labels \(\B\) and \(\F\) distinguish these copied data.
    
\item The chosen generalized Levi--Civita \(E^\B\)-connection \(\nabla^\B\) for \(\mathcal G^\B\), and its transported \(E^\B\)-connection on \(E^\F\), defined by
\[
\nabla^\F_X(\iota Y)=\iota(\nabla^\B_XY),
\  X,Y\in\Gamma(E^\B).
\]
Viewed as an \(E^\F\)-connection, its value in direction \(Z\in\Gamma(E^\F)\) is \(\nabla^\F_{\iota^{-1}Z}\). We use the same direction identification for the spinor connections recalled below.
\end{enumerate}

Throughout our construction, \(M\) is equipped with a Riemannian metric \(g\) induced by \(\mathcal G^\B\), equivalently by \(\mathcal G^\F\), as in \eqref{eq:gh-induced-metric}.

The direct sum \(E^{\B,\F}:=E^\B\oplus E^\F\) carries the
direct-sum \(E^\B\)-connection. Its natural extension to \(T(E^{\B, \F})\), denoted by
\(\nabla^T\), preserves tensor degree and satisfies the Leibniz rule.
The dual connections are determined by the canonical evaluation pairing. For example,
\[
(\nabla_X^{\B,*}\alpha)(Y)
=\rho(X)(\alpha(Y))-\alpha(\nabla_X^\B Y).
\]
Write \(\nabla^{T,*}\) for their tensor extension.
The copy map \(\iota\) is parallel by construction. Both copied pairings
and generalized metrics are preserved. Consequently, for
\(\tau=h^\B\) or \(\tau=G^\B\), the mixed inverse tensors
\[
(\one\otimes\iota)\tau^{-1}\in\Gamma(E^\B\otimes E^\F),
\ 
(\iota\otimes\one)\tau^{-1}\in\Gamma(E^\F\otimes E^\B)
\]
are parallel under \(\nabla^T\).

To apply Section \ref{section-Vertex algebraic foundations}, complexify
the Courant bundles, their pairings, brackets, anchors and connections.
We keep the same notation for these complex-linear extensions. All holonomy groups
in this construction use the smooth real anchored paths of Section
\ref{subsec-Holonomy principle for transitive anchored vector bundles},
with their transports extended complex linearly. To verify the holonomy principle in this convention, let
\(F=F_\R\otimes_\R\C\) be a complexified real bundle with its complexified
connection, and let \(H_\R\) be its real transport group at \(p\). Then
\[
\Pi_U(F)=\Pi_U(F_\R)\otimes_\R\C,
\ 
(F_p)^{H_\R}=(F_{\R,p})^{H_\R}\otimes_\R\C.
\]
Indeed, real and imaginary parts of a parallel section, or of a fixed
vector, satisfy the corresponding real equations separately. Vanishing
of the connection in all real anchored directions is equivalent, by
complex linearity, to vanishing in all complexified directions. Hence
Theorem \ref{holonomy principle} applies on connected opens in this
sense.

Now consider the affinization of vector bundles
\begin{align*}
    \widehat{E}^{\B, \F}_{2^{-1}\Z}\coloneqq & \widehat{E}^\B_\Z\oplus \widehat{E}^\F_{\Z+1/2}\\*
    =&\left(E^\B\otimes \underline{\C\left[t, t^{-1}\right]}\oplus \underline{\C\k_\B}\right)\oplus \left(E^\F\otimes \underline{t^{1/2}\C\left[t, t^{-1}\right]}\oplus \underline{\C\k_\F}\right).
\end{align*}
On this bundle we have a natural \(E^\B\)-connection induced from \(\nabla^\B\) and \(\nabla^\F\), which extends naturally to the tensor algebra bundle \(T\left(\widehat{E}^{\B, \F}_{2^{-1}\Z}\right)\), and in particular to its negative part \[
\mathcal{T}^-:=T\left(\widehat{E}^{\B,\F,-}_{2^{-1}\Z}\right)
.\] Denote this connection by \(\widehat{\nabla}^T\). 

Fix a connected open subset \(U\subset M\) and a point \(p\) in \(U\). Let \(E^{\B}_p\) and \(E^{\F}_p\) be the fiber of \(E^{\B}\) and \(E^\F\) at \(p\), respectively. Note they are now complex vector spaces. Take \(E^\B_p\) to be the vector space \(\h^\B\) and \(E^\F_p\) to be \(\h^\F\) in Section \ref{subsection-algebraic construction}. We use the same fixed levels \(\ell_\B=\ell_\F=1\) in every fiber and in the module construction below. Then by  Theorem \ref{Thm-fiberwise-mosva}, we have:
\begin{prop}
    \(\left(\mathcal T^-_p, Y_{\mathcal T^-_p}, \1_{\mathcal T^-_p}\right)\) is a grading-restricted meromorphic open-string vertex algebra.
\end{prop}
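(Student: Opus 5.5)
The plan is to reduce the statement directly to Theorem \ref{Thm-fiberwise-mosva}, and then check grading-restrictedness separately, since that theorem does not assert it.

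First, I will confirm that the fiber data satisfy the hypotheses of Section \ref{subsection-algebraic construction}. The complexified fibers \(E^\B_p\) and \(E^\F_p\) are finite-dimensional complex vector spaces of the same dimension \(\rk E\), and they are isomorphic via \(\iota_p\). The complex-bilinear extensions of the Courant pairings, \(h^\B_p\) and \(h^\F_p=h^\B_p\circ(\iota_p^{-1}\times\iota_p^{-1})\), are nondegenerate and symmetric: nondegeneracy of a real symmetric form survives complexification. Together with the formal central elements \(\k_\B,\k_\F\) acting by the fixed levels \(\ell_\B=\ell_\F=1\), this gives exactly the triples \((\h^\B,(\cdot,\cdot)_\B,\k_\B)\) and \((\h^\F,(\cdot,\cdot)_\F,\k_\F)\).

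Next, I will identify the fiber \(\mathcal T^-_p=T\bigl(\widehat{E}^{\B,\F,-}_{2^{-1}\Z}\bigr)_p\) with \(T\bigl(\widehat{\h}^{\B,\F,-}_{2^{-1}\Z}\bigr)\otimes\C\1\). By the PBW-type isomorphism \cite[Proposition~3.1]{FHQ2026}, this is the induced module \(V\) of \eqref{vector space}. I then transport \(Y_V\), \(\1\) and \(\mathbf d_V\) along this identification to define \(Y_{\mathcal T^-_p}\) and \(\1_{\mathcal T^-_p}\). With these definitions, Theorem \ref{Thm-fiberwise-mosva} gives all the axioms of Definition \ref{Def-super-mosva} immediately.

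The remaining point is grading-restrictedness. The weight-\(n\) subspace is spanned by the monomials \eqref{PBW-state} with \(\sum_i(m_i-|a_i|/2)=n\). Each letter has weight at least \(\tfrac12\), so there are at most \(2n\) letters. Each \(m_i\) is bounded by \(n+\tfrac12\). Each letter \(a_i\) ranges over a basis of a space of dimension \(\rk E\). Hence \(\dim(\mathcal T^-_p)_{(n)}<\infty\), and the lower bound is \(0\).

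I do not expect a genuine obstacle. The only care needed is the bookkeeping of the identification: the tensor-algebra bundle fiber must match the PBW basis with the correct ordering and signs, so that the transported \(Y\) really is the one defined by the normal ordering \eqref{Bosonic-fermionic naive vo}. Since the identification is just the chosen linear isomorphism of fibers, this is a matter of definition rather than proof.
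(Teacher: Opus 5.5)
Your proposal is correct and follows the paper, which obtains the statement simply by taking \(\h^\B=E^\B_p\) and \(\h^\F=E^\F_p\) (complexified, with the fixed levels \(\ell_\B=\ell_\F=1\)) in Theorem \ref{Thm-fiberwise-mosva}. Your explicit dimension count fills in the grading-restrictedness that the paper only asserts implicitly, and it is sound: there are at most \(2n\) letters, the modes are bounded, and the alphabet is finite.
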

Moreover, an element of the holonomy group \(\hol\left(\widehat{\nabla}^T, U\right)\) acts by automorphisms of this MOSVA:
    \begin{lem}\label{automorphism}
    For any $P_a\in \hol\left(\widehat{\nabla}^T, U\right)$ and $u, v\in \mathcal T^-_p$, we have 
    \begin{equation}
        P_a \left(Y_{\mathcal T^-_p}(u,x)v\right)=Y_{\mathcal T^-_p}(P_a(u), x)P_a(v).
    \end{equation}
    In other words, the holonomy group \(\hol\left(\widehat{\nabla}^T, U\right)\) is a subgroup of the automorphism group of the \mosva\ $\mathcal T^-_p$.
\end{lem}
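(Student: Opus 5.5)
The plan is to show that any holonomy element \(P_a\) at \(p\) acts on \(\mathcal T^-_p\) as the map induced functorially by a linear isomorphism \(g\) of the generating spaces \(\h^\B\oplus\h^\F=E^\B_p\oplus E^\F_p\). The map \(g\) preserves the two bilinear forms and the parity decomposition. One then checks that the construction of Section \ref{subsection-algebraic construction} is natural with respect to such isomorphisms.

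\emph{Step 1: identify \(P_a\).} Since \(\widehat\nabla^T\) is induced from \(\nabla^\B\oplus\nabla^\F\), the central summands \(\underline{\C\k_\B},\underline{\C\k_\F}\) are trivial and parallel. The mode factors \(t^{m}\) and \(t^{m+1/2}\) are constant sections of trivial bundles. Parallel transport on a tensor bundle is the tensor product of the transports on the factors, so uniqueness of solutions of the linear transport ODE gives
\[
P_a\bigl(a_1(-m_1+|a_1|/2)\cdots a_r(-m_r+|a_r|/2)\1\bigr)=(g a_1)(-m_1+|a_1|/2)\cdots (g a_r)(-m_r+|a_r|/2)\1 .
\]
Here \(g=P^\B_a\oplus P^\F_a\) is the transport on \(E^\B\oplus E^\F\) along the same \(A\)-loop. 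Because \(\nabla^\B\) and \(\nabla^\F\) are both \(E^\B\)-connections indexed by the same direction, the same anchored path lifts both. By construction \(P^\F_a=\iota P^\B_a\iota^{-1}\). The map \(g\) preserves parity, since it maps \(E^\B_p\) to itself and \(E^\F_p\) to itself. It preserves \(h^\B_p\) and \(h^\F_p\), because \(\nabla^\B\) is a generalized connection, so \(h\) is parallel, and the transport of a parallel tensor fixes it. The transport also preserves weights.

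\emph{Step 2: naturality.} The isomorphism \(g\), together with the identity on \(\k_\B,\k_\F\) and on \(t\), extends to an automorphism \(\hat g\) of \(T(\widehat{\h}^{\B,\F}_{2^{-1}\Z})\). Each generator of the ideal \eqref{Bosonic-fermionic quotient} is mapped into the ideal. The only nontrivial generators are those involving \((a,b)\), and these are preserved because \(g\) is an isometry preserving parity. Hence \(\hat g\) descends to \(N(\widehat{\h}^{\B,\F}_{2^{-1}\Z})\). It preserves the subalgebra \(N(\widehat{\h}^{\B,\F,+,0}_{2^{-1}\Z})\) and fixes the vacuum module \(\C\1\). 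It therefore induces a linear automorphism of \(V\) which agrees with \(P_a\) on the PBW-spanning set above. This gives the intertwining relation \(P_a\, b(n)\,P_a^{-1}=(gb)(n)\) as operators on \(V\), so \(P_a a(x)P_a^{-1}=(ga)(x)\) for the fields. The identity \(P_aY(a(-m+|a|/2)\1,x)P_a^{-1}=Y(P_a(a(-m+|a|/2)\1),x)\) then follows from the defining derivative formula.

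\emph{Step 3: normal-ordered products.} For a general PBW state, \eqref{Bosonic-fermionic naive vo} is a normal-ordered product. Its ordering permutation and sign depend only on the mode indices and parities, and \(g\) preserves both. Conjugating each factor by \(P_a\) therefore yields the normal-ordered product of the transformed fields. This gives \(P_aY(u,x)P_a^{-1}=Y(P_au,x)\) on spanning vectors, and by linearity for all \(u\). This is equivalent to the displayed identity. Since \(P_a\) is invertible, preserves weights and fixes \(\1\), it is an automorphism.

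The main obstacle I anticipate is Step 2: justifying rigorously that \(P_a\) is well defined on the quotient \(\mathcal T^-_p\). This requires carefully matching the geometric identification \(\mathcal T^-_p\cong V\) via the PBW isomorphism with the algebraic functoriality. The key point to check is that tensor-bundle transport agrees with \(\hat g\) on the creation-mode tensor algebra. Once this is settled, the remaining steps are formal.
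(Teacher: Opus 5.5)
Your proposal is correct and follows essentially the same route as the paper. The paper also lets $Q_a=P^\B_a\oplus P^\F_a$ act on modes, checks that it preserves the defining relations so that it descends to the mode quotient and the negative tensor fibers, and then uses that transport commutes with field expansions, derivatives and normal ordering to obtain the intertwining identity. The differences are cosmetic: the paper states the identity pathwise from $p$ to any $q$ (used later for base-point independence) and also records the naturality of the correction $\Delta(x)$ for the module construction, neither of which this lemma needs.
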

\begin{proof}
We give the transport argument for the present boson-fermion data,
following the strategy of Lemma 3.4 in \cite{Huang2026}. Let \(a\) be a
real anchored path from \(p\) to \(q\) in \(U\), and put
\(Q_a=P_a^\B\oplus P_a^\F\) on the input fibers. Clearly the assignment
\[
b(r)\mapsto (Q_ab)(r),\ 
\k_\B\mapsto\k_\B,\ 
\k_\F\mapsto\k_\F
\]
preserves every defining relation in \eqref{Bosonic-fermionic quotient}
and \eqref{relations module}. It is an isomorphism of the
corresponding mode quotients, with the inverse induced by \(Q_a^{-1}\). Thus it induces an isomorphism, denoted by \(P_a\),
of the negative tensor fibers. On a basis element it is
\[
P_a\bigl(b_1(r_1)\cdots b_k(r_k)\1_p\bigr)
=(Q_ab_1)(r_1)\cdots(Q_ab_k)(r_k)\1_q.
\]
Write \(Y_{\mathcal T^-_p},Y_{\mathcal T^-_q}\) for the fiber vertex maps. Transport commutes with the
formal field expansions and their derivatives. It preserves the normal ordering. The definition of the vertex operator formula
\eqref{Bosonic-fermionic naive vo} therefore gives, coefficientwise,
\[
P_aY_{\mathcal T^-_p}(u,x)v=Y_{\mathcal T^-_q}(P_au,x)P_av.
\]
Weights and parity are preserved as well. The coefficient of \(x\) in
\(Y_{\mathcal T^-_p}(u,x)\1_p\) defining the creation derivative is preserved by the
same identity. Hence for \(q=p\) this is an automorphism of all the
stated MOSVA data, proving the lemma.

For later use, we note that the quadratic correction needed to define the actual vertex operator on modules is natural as well. To
see this, choose complex
\(h^\F\)-orthonormal bases \(\{e_i\}\) at \(p\) and \(\{e'_j\}\) at \(q\),
and write \(Q_a e_i=\sum_j A_{ji}e'_j\). Then \(AA^t=I\), so
\[
\begin{aligned}
\sum_i(Q_ae_i)(r)(Q_ae_i)(s)
&=\sum_{j,k}(AA^t)_{jk}e'_j(r)e'_k(s)\\
&=\sum_j e'_j(r)e'_j(s).
\end{aligned}
\]
Applying this identity
to the correction operator defined in Section
\ref{subsection-algebraic construction} proves
\(P_a\Delta_p(x)=\Delta_q(x)P_a\),
and consequently the same intertwining for its exponential.
\end{proof}

Denote
\[V_{p,U}:=(\mathcal T^-_p)^{\hol(\widehat\nabla^T,U)}.\]
Combining Proposition \ref{Prop 3.3 in Huang 2012} and Lemma \ref{automorphism}, we have the following corollary:
\begin{cor}\label{fixed point bosonic mosva}
\(\left(V_{p, U}, Y_{V_{p, U}}, \1_{V_{p, U}}\right)\) is a \mosva.
\end{cor}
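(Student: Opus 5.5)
The plan is to apply Proposition \ref{Prop 3.3 in Huang 2012} to the grading-restricted MOSVA $\mathcal T^-_p$ with $H=\hol(\widehat\nabla^T,U)$. Three hypotheses must be checked: $H$ is a group, it acts by automorphisms, and these automorphisms preserve weights. The conclusion is then that the fixed-point space $V_{p,U}=(\mathcal T^-_p)^{H}$, with the restricted vertex operator and the vacuum $\1_{V_{p,U}}=\1_p$, is a meromorphic open-string vertex subalgebra. The $\frac{\Z}{2}$-graded structure is inherited automatically.

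The first step is the group property. This follows from the concatenation and reversal identities $P_{b*a}=P_bP_a$ and $P_{a^{-1}}=P_a^{-1}$, together with the zero lift over a constant path giving the identity. All three were established before the definition of the holonomy group, and they apply to real anchored loops in $U$ based at $p$, with transport extended complex linearly.

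The second step is the automorphism property, which is exactly Lemma \ref{automorphism}. It gives $P_a Y_{\mathcal T^-_p}(u,x)v=Y_{\mathcal T^-_p}(P_au,x)P_av$. The proof of that lemma also shows that $P_a$ acts on basis elements by $b_1(r_1)\cdots b_k(r_k)\1_p\mapsto (Q_ab_1)(r_1)\cdots(Q_ab_k)(r_k)\1_p$. Since the mode indices $r_i$ are unchanged, $P_a$ preserves each weight space $(\mathcal T^-_p)_{(n)}$. It also fixes the vacuum and preserves parity.

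The remaining step is to confirm that the proof of Proposition \ref{Prop 3.3 in Huang 2012} carries over verbatim to the $\frac{\Z}{2}$-graded setting; I expect this to be the main, though mild, obstacle. The key facts to check are these:
\begin{enumerate}
\item $V_{p,U}$ is closed under all coefficients of $Y$, because each automorphism $g$ satisfies $g(u_nv)=(gu)_n(gv)=u_nv$.
\item $V_{p,U}$ is a graded subspace, because $g$ preserves weights. Hence the lower bound condition, the $\mathbf d$-commutator formula, and grading-restrictedness restrict.
\item $D$ preserves $V_{p,U}$, since it is the coefficient of $x$ in $Y(v,x)\1$.
\end{enumerate}
Rationality and associativity for $V_{p,U}$ need one further observation. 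Because $V_{p,U}$ is graded, its restricted dual is the restriction of $(\mathcal T^-_p)'$. So the matrix coefficients for $V_{p,U}$ are among those of $\mathcal T^-_p$, and the required convergence and equalities hold there. None of these steps uses commutativity or parity signs, so the $\frac{\Z}{2}$-grading causes no difficulty.
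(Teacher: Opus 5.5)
Your proposal is correct and takes the same approach as the paper, which obtains the corollary directly by combining Proposition \ref{Prop 3.3 in Huang 2012} with Lemma \ref{automorphism}. Your extra checks make explicit what the paper leaves implicit: the group property, weight preservation from the unchanged mode indices, and the fact that Huang's fixed-point argument uses no commutativity or parity signs and so carries over to the $\frac{\Z}{2}$-graded setting.
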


Denote the space of parallel sections under \(\widehat{\nabla}^T\) by
\[V_U:=\Pi_U(\mathcal T^-).\]
 Since the tensor product of parallel sections is parallel, \(V_U\) naturally has a structure of associative algebra. Clearly, the assignment 
\(U\mapsto V_U\)
together with the restrictions
\[\operatorname{ResAlg}_{U}^{\tilde{U}}: V_{\tilde{U}}\to V_{U}\]
by restricting the sections for \(U\subset \tilde{U}\) gives a sheaf of associative algebras, denoted as \(\mathscr{V}\).
We retain this ordinary sheaf of all locally finite parallel sections.
Let \(\V_k\) be its weight-\(k\) subsheaf, for
\(k\in\frac{\Z_{\geq0}}{2}\), and distinguish the finite-weight subspace
\[
V_U^{\mathrm{fin}}:=\coprod_{k\in\frac12\Z_{\geq0}}
\Gamma(U,\V_k)\ \subseteq\ V_U=\Gamma(U,\V).
\]
The ordinary sheaf is \(\V=\coprod_k^{\mathrm{Sh}}\V_k\), where the
sum is sheafified. Its sections have finite weight support locally, but
need not have one finite weight support on all of \(U\). In this paper,
a sheaf of MOSVAs means these ordinary
sheaves with compatible vertex coefficients, while the MOSVA assertions on section spaces refer to
\(V_U^{\mathrm{fin}}\). This clarification does not replace \(\V\)
by its generally smaller finite-weight presheaf.
For the connected open set \(U\) fixed above, evaluation at \(p\) is an
isomorphism on each parallel weight subspace by Theorem
\ref{holonomy principle}. Since a parallel section
value at \(p\) lies in the algebraic direct sum, the section has only the
finitely many weights occurring in that value. Conversely, a
holonomy fixed vector has finitely many weight components, each of which
extends to a parallel section. Taking their
finite sum gives the following identification:
\begin{equation} \label{Vector space of bosonic mosva}
   V_U \cong V_{p, U}.
\end{equation}
We now state the main theorem of this subsection.

\begin{thm}\label{sheaf of bosonic-fermionic mosva}\hfill
    \begin{enumerate}
        \item For a connected open set \(U\subset M\) and a fixed point \(p\in U\), \(V_U\) has a structure of grading-restricted \mosva, which is independent of the choice of the point \(p\in U\).
        \item For a general open subset \(U=\bigsqcup_{j\in J}U_j\) with connected components \(U_j\),
\[
V_U\cong\prod_{j\in J}V_{U_j},\ 
V_U^{\mathrm{fin}}\cong
\coprod_{k\in\frac12\Z_{\geq0}}\prod_{j\in J}(V_{U_j})_{(k)}.
\]
The second space is a \mosva. It is grading-restricted when \(U\)
has finitely many components.
        \item Restrictions on the ordinary sheaf \(\V\) preserve each
vertex coefficient. They restrict to homomorphisms of \mosvas\ on
\(V_U^{\mathrm{fin}}\).     \end{enumerate}
\end{thm}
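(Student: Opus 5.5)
For part (1), I will transport the MOSVA structure of Corollary \ref{fixed point bosonic mosva} along the identification \eqref{Vector space of bosonic mosva}. Fix connected \(U\) and \(p\in U\). By Lemma \ref{automorphism} and Proposition \ref{Prop 3.3 in Huang 2012}, \(V_{p,U}\) is a MOSVA. Grading-restrictedness is inherited from \(\mathcal T^-_p\): each weight space of \(T(\widehat{E}^{\B,\F,-}_{2^{-1}\Z})_p\) is finite dimensional, since only finitely many mode words \(a_1(-m_1+|a_1|/2)\cdots a_r(-m_r+|a_r|/2)\) have a given total weight and \(\dim E_p<\infty\). I then define \(Y_{V_U}(u,x)v:=\ev_p^{-1}\bigl(Y_{V_{p,U}}(u(p),x)v(p)\bigr)\) coefficientwise, with \(\1_{V_U}\) the parallel section through \(\1_p\); the constant-weight, parallel vacuum section is clearly \(\widehat\nabla^T\)-parallel. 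Each coefficient of the right-hand side is holonomy fixed and of finite weight, so by Theorem \ref{holonomy principle} it extends uniquely to a parallel section. All axioms then hold because \(\ev_p\) is a graded linear isomorphism.

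For independence of \(p\), take \(q\in U\) and an anchored path \(a\) in \(U\) from \(p\) to \(q\), which exists by the transitivity argument in the proof of Theorem \ref{holonomy principle}. The intertwining \(P_aY_{\mathcal T^-_p}(u,x)v=Y_{\mathcal T^-_q}(P_au,x)P_av\) established in the proof of Lemma \ref{automorphism} holds for non-loop paths too. For a parallel section \(s\) we have \(P_a s(p)=s(q)\), so \(\ev_q\circ\ev_p^{-1}=P_a\) on \(V_{p,U}\). Hence the two transported structures on \(V_U\) coincide, and \(P_a\) maps \(V_{p,U}\) onto \(V_{q,U}\) because it conjugates holonomy groups. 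A second, pointwise description confirms this: the coefficient \(u_nv\) is the section \(q'\mapsto Y_{\mathcal T^-_{q'}}(u(q'),x)v(q')\), which is manifestly basepoint-free.

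For part (2), the sheaf property gives \(V_U\cong\prod_jV_{U_j}\), since parallelism is a local condition. Restricting to finite weight, a section lies in \(\Gamma(U,\V_k)\) iff each component has weight \(k\), which gives the displayed direct sum of products. On \(\coprod_k\prod_j(V_{U_j})_{(k)}\) I will define \(Y\) componentwise. The main check is that each coefficient stays in finite weight. For \(u,v\) of weights bounded by \(N\), each coefficient \(u_nv\) has weight \(\operatorname{wt}u+\operatorname{wt}v-n-1\), uniform across components, so it lies in a fixed weight space. The lower bound and weight-restriction of series then make rationality and associativity componentwise statements. Here the pairing with \(v'\in (V_U^{\mathrm{fin}})'\) involves only finitely many weight spaces, each a product, and I expect this to be the subtle point. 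I would handle it by reducing to homogeneous \(u_i,v\) and noting that the matrix coefficients are finite sums over weights of componentwise rational functions whose pole orders are uniformly bounded by the weights, which is the standard Huang-style uniform bound. If \(J\) is finite, the weight spaces are finite products of finite-dimensional spaces, giving grading-restrictedness.

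For part (3), restriction of a parallel section commutes with the pointwise formula above, because the vertex coefficient at each point depends only on the fiber values there. Therefore \(\operatorname{ResAlg}_U^{\tilde U}\) preserves every coefficient, the vacuum and the grading. On finite-weight parts this gives MOSVA homomorphisms, and compatibility with composition of restrictions is automatic.
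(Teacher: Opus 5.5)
Your parts (1) and (3) follow the paper's proof. You transport the structure along \(\ev_p\), and you get independence of \(p\) from \(\ev_q=P_a\circ\ev_p\) together with the pathwise intertwining in the proof of Lemma \ref{automorphism}. For restrictions, your pointwise description of \(u_nv\) is equivalent to the paper's identification of \(\operatorname{ResAlg}_U^{\tilde U}\) with the inclusion \(V_{p,\tilde U}\hookrightarrow V_{p,U}\) of fixed-point subalgebras.

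The analytic axioms in part (2), which you rightly flag as the subtle point, are not yet handled by what you propose. When \(J\) is infinite, an element \(v'\) of the restricted dual restricts to an arbitrary linear functional on the output weight space \(\prod_j(V_{U_j})_{(d)}\). That functional need not be a finite sum of componentwise functionals, so the matrix coefficient is not a finite sum of componentwise rational functions.

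Uniform pole bounds alone do not fix this. Take componentwise functions \(z^j\), \(j\in\N\): they have no poles, and their coefficient vectors are the standard vectors \(e_m\in\prod_j\C\). A functional with \(v'(e_m)=m!\) then gives a divergent series.

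The missing ingredient, which is what the paper uses, is the following. For fixed input weights and output weight \(d\), the projected products and iterates in all components are expansions of rational functions from one common finite-dimensional space \(\mathcal R\). Your pole bounds give this once you add that the numerator degree is fixed by the homogeneity coming from the \(\mathbf d\)-commutator formula.

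Choose a basis \(f_1,\dots,f_N\) of \(\mathcal R\), and finitely many monomials on which their expansions are linearly independent. Then the weight-\(d\) projection becomes \(\sum_\nu f_\nu c_\nu\), where each \(c_\nu\in\prod_j(V_{U_j})_{(d)}\) is a fixed finite linear combination of formal coefficients. Applying \(\langle v',\cdot\rangle\) gives \(\sum_\nu f_\nu\langle v',c_\nu\rangle\), which converges absolutely to a rational function. Associativity reduces in the same way to the componentwise identity. With that amendment your proof coincides with the paper's.
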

\begin{proof}\hfill
    \begin{enumerate}
        \item Transport \(Y_{V_{p,U}}\) through \eqref{Vector space of bosonic mosva}
and take the constant scalar section \(1\) as the vacuum \(\1\in V_U\).
Corollary \ref{fixed point bosonic mosva} gives the MOSVA structure. To check independence of \(p\), denote this transported vertex map by
\(Y_{V_U}^{(p)}\). For \(q\in U\), choose a real anchored path \(a\) from
\(p\) to \(q\). Evaluation of a parallel section satisfies
\(\operatorname{ev}_q=P_a\operatorname{ev}_p\). The pathwise identity proved
in Lemma \ref{automorphism} gives
\[
\begin{aligned}
\operatorname{ev}_q\bigl(Y_{V_U}^{(p)}(u,x)v\bigr)
&=P_aY_{\mathcal T^-_p}(u(p),x)v(p)\\
&=Y_{\mathcal T^-_q}(u(q),x)v(q).
\end{aligned}
\]
These equalities hold for every formal coefficient. Note that each coefficient is a parallel section by construction. Injectivity of \(\operatorname{ev}_q\) now gives
\(Y_{V_U}^{(p)}=Y_{V_U}^{(q)}\), and \(P_a\1_p=\1_q\) gives the same
vacuum section. In particular, since the fiberwise free MOSVA \(\mathcal T^-_p\) is grading-restricted, so are \(V_{p,U}\) and \(V_U\).
        \item Parallelness is componentwise, giving the first displayed
bijection as ordinary vector spaces. Restriction in each weight gives
the second bijection; the sum outside the product imposes a common finite
weight support. A parallel section on a connected open has finite weight
support by \eqref{Vector space of bosonic mosva}, so
\(V_U^{\mathrm{fin}}=V_U\) there. (For infinitely many components the
two spaces can differ.)
Define the vertex coefficients componentwise. For homogeneous inputs
of weights $p,q$, the rule
\[
u_n v\in\Gamma(U,\V_{(p+q-n-1)})
\]
gives a uniform truncation bound on $V_U^{\mathrm{fin}}$, while the
vacuum and derivative identities hold componentwise.

For the analytic axioms, fix the input weights and an output weight
$d$.  
Then the projected products
and iterates are expansions of finite sums
\(\sum_{\nu=1}^N f_\nu c_\nu\), \(c_\nu\in\Gamma(U,\V_d)\), 
where the rational functions $f_\nu$ belong to a common
finite-dimensional space independent of the connected component.
Choosing a basis of this space makes the $c_\nu$ fixed finite linear
combinations of formal coefficients. Since a graded-dual functional
involves only finitely many output weights, applying it reduces the
analytic axioms to finite linear combinations of the componentwise
identities. Absolute convergence and associativity therefore hold
on $V_U^{\mathrm{fin}}$, proving the MOSVA assertion. The
grading-restriction statements follow from part~(1) and
$(V_U^{\mathrm{fin}})_{(0)}=\prod_j\mathbb C$.
\item 
Let \(U\subset \tilde{U}\) be open sets. We prove that the restriction
\[
\operatorname{ResAlg}_{U}^{\tilde{U}}\colon V_{\tilde{U}}\to V_U,\  
s\mapsto s|_{U},
\]
preserves the vertex coefficients. First suppose both opens are nonempty and connected and fix \(p\in U\). By the holonomy principle for parallel sections, evaluation at \(p\) identifies
\[
V_{U}\cong V_{p,U},
\  
V_{\tilde{U}} \cong V_{p,\tilde{U}}.
\] 
Since every loop in \(U\) is a loop in \(\tilde{U}\), we have
\[\hol\left(\widehat{\nabla}^T, U\right) \subset \hol\left(\widehat{\nabla}^T, \tilde{U}\right),\]
and hence a natural inclusion of fixed-point subspaces
\[
\operatorname{incl}_U^{\tilde{U}}(p):\ V_{p,\tilde{U}}
\hookrightarrow
V_{p,U}.
\]
Under the above identifications, \(\operatorname{ResAlg}_{U}^{\tilde{U}}\) corresponds to \(\operatorname{incl}_U^{\tilde{U}}(p)\). In particular, \(\operatorname{incl}_U^{\tilde{U}}(p)\) is a MOSVA homomorphism. Via the identification of \(\operatorname{ResAlg}_{U}^{\tilde{U}}\)  and \(\operatorname{incl}_U^{\tilde{U}}(p)\), we have
\[
\operatorname{ResAlg}_{U}^{\tilde{U}} \big(Y_{V_{\tilde{U}}}(u,x)v\big)
\;=\;Y_{V_U} \big(\operatorname{ResAlg}_{U}^{\tilde{U}}u,\,x\big)\,\operatorname{ResAlg}_{U}^{\tilde{U}}v
\]
for all \(u,v\in V_{\tilde{U}}\), with restriction applied to each formal coefficient. 
For arbitrary opens, write \(\tilde U=\bigsqcup_\alpha C_\alpha\)
and \(U=\bigsqcup_\beta D_\beta\). Each connected \(D_\beta\) lies in
one \(C_{\alpha(\beta)}\). Restriction sends \((s_\alpha)_\alpha\) to
\((s_{\alpha(\beta)}|_{D_\beta})_\beta\), in each weight and on ordinary
sections. The connected calculation proves the coefficient identity on
every component. Restrictions compose and
preserve finite weight support, giving the stated homomorphisms. 
    \end{enumerate}
    \end{proof}

\subsection{A module for the zero mode associative algebra}\label{subsection-Modules-for-associative algebra}
We proceed to construct a sheaf of modules for \(\V\). We again fix a local open set \(U\subset M\) for this subsection. For this subsection and the subsequent spinorial module and Dirac constructions, we impose the following additional hypotheses. The underlying real Courant bundle \(E_{\mathbb R}\) is even rank and carries a fixed \textit{absolutely irreducible} real Clifford module bundle \(S_{0,\mathbb R}\), i.e.
\[
\operatorname{End}_{\CL(E_{\mathbb R})}(S_{0,\mathbb R})=\mathbb R\one,
\ 
\CL(E_{\mathbb R},h)\cong\operatorname{End}_{\mathbb R}(S_{0,\mathbb R}).
\]
We also require a specified global compatible \(\mathbb Z_2\)-grading on the complexification of its real density normalization below. 
They are not required in Section \ref{section-Vertex algebraic foundations}, for the holonomy principle, or for the preceding construction of \(\V\).

For the normalization, we temporarily use the real bundle \(E_{\mathbb R}\), its Clifford algebra with relation \(c(e)^2=h(e,e)\), and a real generalized connection \(\nabla\). A real Clifford-compatible \(E_{\mathbb R}\)-connection on \(S_{0,\mathbb R}\) satisfies
\[
\nabla^{S_{0,\mathbb R}}_{e_1}(c(e_2)s)
=c(\nabla_{e_1}e_2)s+c(e_2)\nabla^{S_{0,\mathbb R}}_{e_1}s.
\]
Such a connection always exists. Indeed, in a local orthonormal frame the \(\mathfrak{so}(E_{\mathbb R},h)\)-valued connection form has its Clifford spin lift, and compatible local lifts glue by a partition of unity.  
Any two compatible real lifts differ by a real scalar \(E_{\mathbb R}\)-one-form. Indeed, assume \(\nabla^{S_{0,\R} (1)}\) and  \(\nabla^{S_{0,\R} (2)}\) are two such global lifts. Then by Clifford compatibility, for any $e, v \in \Gamma\left(E_{\mathbb{R}}\right)$ and $s \in \Gamma\left(S_{0, \mathbb{R}}\right)$, we have 
\[\nabla^{S_{0,\R} (1)}_e (c(v)s) -\nabla^{S_{0,\R} (2)}_e (c(v)s)=c(v)\left(\nabla^{S_{0,\R} (1)}_e-\nabla^{S_{0,\R} (2)}_e\right)s.\]
Hence\[\nabla^{S_{0,\R} (1)}_e-\nabla^{S_{0,\R} (2)}_e \in \operatorname{End}_{\CL\left(E_{\mathbb{R}}\right)}\left(S_{0, \mathbb{R}}\right).\]
Since \(\operatorname{End}_{\CL(E_{\mathbb R})}(S_{0,\mathbb R})=\mathbb R\one\), there exists a unique \(E_\R\)-one-form \(a\), such that
\[\nabla^{S_{0,\R} (1)}_e=\nabla^{S_{0,\R} (2)}_e+a(e)\one.\]
The following normalization removes this freedom. We first recall the real density bundle.

\begin{definition}
    Let \(V\to M\) be a real vector bundle of rank \(n\), and let
\(s\in\mathbb R\). The line bundle
\(
\left|\det V^*\right|^s
\)
is the \textit{bundle of \(s\)-densities on \(V\)}. Its fiber over \(p\in M\) consists
of all functions
\[
\omega_p:\Lambda^n V_p\setminus\{0\}\to \mathbb R
\]
satisfying
\[
\omega_p(\lambda \mathbf v)=|\lambda|^s\omega_p(\mathbf v)
\]
for every
\(
\mathbf v\in \Lambda^n V_p\setminus\{0\}\), \(
\lambda\in\mathbb R^\times.
\)
\end{definition}
Put \(\rk=\operatorname{rank}_{\mathbb R}S_{0,\mathbb R}\). The real normalized spinor bundle is
\[
S_{\mathbb R}:=S_{0,\mathbb R}\otimes
\left|\det_{\mathbb R}S_{0,\mathbb R}^*\right|^{1/\rk}.
\]
\(\nabla^{S_{0, \R}}\) induces an \(E_\R\)-connection on \(S_\R\). If a local determinant section \(\omega\) satisfies
\(\nabla^{\det S_{0,\mathbb R}^*}_e\omega=\alpha(e)\omega\), then
\[
\nabla_e|\omega|^{1/\rk}=\frac{1}{\rk}\alpha(e)|\omega|^{1/\rk}.
\]
Changing the real lift on \(S_{0, \R}\) by \(a(e)\one\) changes this line connection by \(-a(e)\). Thus the induced \(\nabla^{S_{\mathbb R}}\) is Clifford compatible and depends only on \(\nabla\). This is the argument of \cite[Lemma 49]{CortesDavid2021}, applicable under our stated real hypothesis.

Now set \(S=S_{\mathbb R}\otimes\mathbb C\), and write \(S_0=S_{0,\mathbb R}\otimes\mathbb C\). The Clifford action and the normalized connection \(\nabla^S\) are complexified from the real ones.  
These complex Clifford modules remain irreducible. Choose the required grading involution \(J\) on \(S\), so that
\[
J^2=\one,\  Jc(e)=-c(e)J.
\]
 In even rank, locally such a \(J\) is just a sign times (a normalization of) the Clifford volume element. Consequently its global choice determines (in fact, is equivalent to) an orientation of the vector bundle \(E_{\mathbb R}\). 
Clifford compatibility already forces \(\nabla^S\) to preserve \(J\). Indeed, differentiating the anticommutation identity shows that \(\nabla_e^{\operatorname{End}S}J\) anticommutes with every \(c(v)\). Hence \(J\nabla_e^{\operatorname{End}S}J\) commutes with the full complex Clifford algebra and is a scalar \(\lambda_e\one\). Since \(\nabla_e^{\operatorname{End}S}J=\lambda_eJ\), differentiating \(J^2=\one\) gives \(2\lambda_e\one=0\). Thus \(\nabla_e^{\operatorname{End}S}J=0\).

To obtain the Dirac generating operator independent of \(\nabla\), one further twists by the half-density bundle on \(M\):
\[
\S\coloneqq S\otimes L,\ 
L\coloneqq|\det T^*M|^{1/2}.
\]
This is the \textit{canonical weighted spinor bundle} associated to the fixed real spinor and complex grading data. The half-density factor is even, so its grading involution is \(\theta_\S=J\otimes\one_L\).

Next, given a generalized connection \(\nabla\) on \(E\), define its \textit{trace divergence} by
\[
\div^\nabla(v):=\tr(\nabla v).
\]
More explicitly, if \(\{e_a\}\) is a local frame of \(E\) and
\(\{\widetilde e_a\}\) is the dual frame with respect to the Courant pairing \(h\),
so that
\(
h(e_a,\widetilde e_b)=\delta_{ab},
\) 
then independently of the chosen local frame,
\[
\div^{\nabla}(v)
=
\sum_a h\left( \nabla_{e_a}v,\widetilde e_a\right).
\]
The half-density line bundle \(L\) carries an \(E\)-connection \(\nabla^L\) defined
by
\begin{equation}\label{def-nabla-L}
   \nabla^L_v\mu
:=
\L_{\rho(v)}\mu
-
\frac12\div^{\nabla}(v)\mu, 
\end{equation}
where
\(
v\in\Gamma(E)\), \(\mu\in\Gamma(L)\). 
The induced \(E\)-connection on \(\S=S\otimes L\)
is
\begin{equation}\label{connection on canonical weighted spinor bundle}
    \nabla^{\S}:=
\nabla^{S}\otimes\one_L+\one_S\otimes\nabla^L.
\end{equation}
The resulting complex weighted connection \(\nabla^\S\) is Clifford compatible and even. It depends on the chosen generalized connection \(\nabla\). Its torsion-corrected Dirac generating operator is independent of \(\nabla\), as proved in Section \ref{Section-example}.

Apply these spinorial hypotheses and the real normalization to the
fermionic Courant copy \(E^\F\), using \(\iota^{-1}\) in the first
argument to view \(\nabla^\F\) as an \(E^\F\)-connection. After
complexification, pull the direction back by \(\iota\), so that
\(\nabla^\S\) below is an \(E^\B\)-connection. Clifford compatibility is
\[
\nabla^\S_{e_1}(c(e_2)s)
=c(\nabla^\F_{e_1}e_2)s+c(e_2)\nabla^\S_{e_1}s,
\  e_1\in\Gamma(E^\B),\ e_2\in\Gamma(E^\F),\ s\in\Gamma(\S).
\]

For the canonically twisted module construction, we use the additional \(\Z_2\)-grading \(\S=\S^{[0]}\oplus\S^{[1]}\), with parity involution \(\theta_\S\). Each \(c(e)\) is odd, and the induced compatible connection is even by the preceding argument. The Clifford action determines the bundle map
\[
\kappa:\Gamma(\S)\rightarrow\Gamma(E^{\F,*}\otimes\S),
\quad
\kappa s=\sum_i e^i\otimes c(e_i)s,
\]
where \(\{e_i\}\) is any frame of \(E^\F\) and \(\{e^i\}\) its dual.
Equivalently, \((\kappa s)(Y)=c(Y)s\), so the definition is independent
of the frame.

Our first task is to construct a module structure, denoted by \(\Phi_U\), on \(\Gamma_U(\S)\) for the associative algebra \(\Pi_U \left(T(E^{\B, \F})\right)\). Let \(\pi_\B:E^{\B,\F}\to E^\B\) be the projection. The direct sum has anchor
\[
\rho_{E^{\B,\F}}:=\rho_{E^\B}\circ \pi_\B,
\  \rho_{E^{\B,\F}}(X,Y)=\rho_{E^\B}(X).
\]
The induced \(E^{\B,\F}\)-connection on its tensor bundles is
\(\nabla^{\B,\F}_Z:=\nabla^T_{\pi_\B Z}\). In particular \(\nabla^{\B,\F}_{(0,Y)}=0\).
In the iterated connections below, \(\nabla^{T,*}\) denotes the
corresponding pulled-back dual connection. Also, \(\nabla^\S_X\) for
\(X\in\Gamma(E^\B)\) denotes the spinor connection in fermionic direction
\(\iota X\). We now equip \(\S\) with the following
\(E^{\B,\F}\)-connection:
  \[
\A\coloneqq\nabla^\S+\kappa,\ 
\A_Zs=\nabla^\S_{\pi_\B Z}s+c(\pi_\F Z)s.
\]
Here \(\pi_\F\) is the fermionic projection, and both dual summands are included in
\((E^{\B,\F})^*\). \(\A\) is indeed an \(E^{\B,\F}\)-connection: the Leibniz rule for \(\nabla^\S\) and the \(C^\infty\)-linearity of \(\kappa\) give 
\begin{align*}
\A(fs)&=\nabla^{\S}(fs)+\kappa(fs)\\
&=d_{E^{\B}} f\otimes s+f \nabla^\S s+ f\kappa(s)\\
&=d_{E^{\B, \F}} f\otimes s+f\A s.
\end{align*}

Then we define

\begin{definition}\label{Def-iterated-K-connection}
For each $k\geq0$, let
\begin{equation}
\A^{(k)}\coloneqq
\sum_{i=1}^k
\one_{E^{\B,\F,*}}^{\otimes(i-1)}\otimes\nabla^{T,*}
\otimes\one_{E^{\B,\F,*}}^{\otimes(k-i)}\otimes\one_\S
+\one_{E^{\B,\F,*}}^{\otimes k}\otimes\A
\end{equation}
be the induced connection on
$\left(E^{\B,\F,*}\right)^{\otimes k}\otimes\S$.
In particular,
$\A^{(0)}=\A$.
The \textit{$k$-fold iterated connection of $\A$}, denoted by
$\A^k:\Gamma(\S)\to
\Gamma\left(\left(E^{\B,\F,*}\right)^{\otimes k}\otimes\S\right)$,
is the following operator defined recursively:
\begin{equation}
\A^0=\one_\S,\quad
\A^{k+1}\coloneqq\A^{(k)}\circ\A^k.
\end{equation}
\end{definition}

We first define $\Phi_U$ on every homogeneous tensor field
$\tau^{(k)}\in\Gamma_U((E^{\B,\F})^{\otimes k})$, whether or not it is
parallel, by ordered full evaluation on $s\in\Gamma_U(\S)$:
\begin{equation}
\Phi_U\left(\tau^{(k)}\right)s
=\left(\iota_{\tau^{(k)}}\circ\A^k\right)s
=\A^k_{\tau^{(k)}}s.
\end{equation}
Here all evaluations are ordered. In Definition \ref{Def-iterated-K-connection}, the new covector slot in each summand is placed at the far left. Thus, for
$\Xi\in\Gamma_U\left((E^{\B,\F,*})^{\otimes k}\otimes\S\right)$ and $X_0,\ldots,X_k\in\Gamma_U(E^{\B,\F})$,
\[
\begin{aligned}
(\A^{(k)}\Xi)(X_0,\ldots,X_k)
={}&\A_{X_0}\bigl(\Xi(X_1,\ldots,X_k)\bigr)\\
&-\sum_{j=1}^k\Xi(X_1,\ldots,\nabla^{\B,\F}_{X_0}X_j,\ldots,X_k).
\end{aligned}
\]
For $\tau^{(l)}=X_1\otimes\cdots\otimes X_l$, $\iota_{\tau^{(l)}}$ contracts the first $l$ covector slots with $X_1,\ldots,X_l$, in this order. We also write $\iota_{\tau^{(l)}}^{[r]}$ for the contraction which leaves the first $r$ covector slots free and contracts the next $l$ slots in the same order; in particular, $\iota_{\tau^{(l)}}^{[0]}=\iota_{\tau^{(l)}}$. These contractions extend by linearity. The iterated connection $\nabla^{T,j}$ on tensors uses the same convention: each new derivative covector is placed first, and the induced connection acts on all existing slots. For the mixed tensor $\nabla^{T,j}\tau^{(l)}$, $\iota_{\nabla^{T,j}\tau^{(l)}}$ places its $j$ covector slots first and contracts its $l$ vector slots with the first $l$ covector slots of the input. All remaining slots retain their order.
At degree zero, $\Phi_U(f)s=fs$.

Let us exemplify the action by computing that of tensors of the first two degrees.

\begin{eg} \label{first order eg}
    An arbitrary element $\tau^{(1)} \in \Pi_U \left(E^{\B, \F}\right)$ is linear combinations of the form \((X, Y)\), where \(X\in \Gamma_U\left(E^\B\right)\) and \(Y\in \Gamma_U\left(E^\F\right)\). Then for $s\in \Gamma_U(\S)$
    \begin{align*}
        \Phi(\tau^{(1)}) s &=\left(\iota_{\tau^{(1)}}\circ \A\right) s\\
        &=\iota_{
        (X,Y)} (\nabla^\S s+ \kappa s)\\
        &=\nabla_X^\S s + c(Y)s.
    \end{align*}
\end{eg}

\begin{eg}
 An arbitrary element $\tau^{(2)}\in \Pi_U\left(\left(E^{\B, \F}\right)^{\otimes 2}\right)$ is linear combinations of the form
$$\tau^{(2)}=(X_1\otimes X_2, X_3\otimes Y_1, Y_2\otimes X_4, Y_3\otimes Y_4),$$
where $X_i\in \Gamma_U\left(E^\B\right)$ and $Y_j\in \Gamma_U\left(E^\F\right)$, \(1\leq i, j \leq 4\).  

Now, let $s\in \Gamma_U(\S)$.  We pick a local frame \(\{e_i\}\) of \(E^\B\) and its dual frame \(\{e^i\}\) of \(E^{\B, *}\). Note that \(E^\F\cong E^\B\) as vector bundles, so we will use the same frame and dual frame for \(E^\F\) here. The four summands below are ordered as in $\tau^{(2)}$. We have
\[\begin{aligned}
    \mathbb{A}^2 s
=&\left(\mathbbm{1}_{E^{\B, \F, *}} \otimes \mathbb{A} + \nabla^{T, *} \otimes \mathbbm{1}_\S\right)(\mathbb{A}s)\\
=&\left(\one_{E^{\B, \F, *}}\otimes \nabla^\S +\one_{E^{\B, \F, *}} \otimes \kappa +\nabla^{T, *} \otimes \one_\S\right) \left(\nabla^\S+\kappa\right) s \\
=&\left(\one_{E^{\B, \F, *}}\otimes \nabla^\S +\nabla^{T, *} \otimes \one_\S\right) \nabla^\S s +\left(\one_{E^{\B, \F, *}}\otimes \nabla^\S +\nabla^{T, *} \otimes \one_\S\right) \kappa s\\
&+\left(\one_{E^{\B, \F, *}} \otimes \kappa \right) \left(\nabla^\S+\kappa\right) s\\
=& e^i\otimes \bigl( e^j\otimes \nabla^\S_{e_i}\nabla^\S_{e_j} s+\left(\nabla^{T, *}_{e_i}e^j\right)\otimes (\nabla^\S_{e_j} s) \bigr)\\
&+ e^i \otimes \bigl( e^j\otimes \nabla^\S_{e_i}\bigl(c(e_j)s\bigr) +\left(\nabla^{T, *}_{e_i} e^j\right)\otimes \bigl(c(e_j)s\bigr) \bigr)\\
&+ e^i \otimes e^j \otimes c(e_i)\nabla^{\S}_{e_j}s
  + e^i \otimes e^j \otimes c(e_i)c(e_j)s\\
=& e^i\otimes e^j \otimes \bigl(\nabla^\S_{e_i}\nabla^\S_{e_j}-\nabla^\S_{\nabla^{T}_{e_i}e_j}\bigr)s\\
&+e^i\otimes e^j\otimes\bigl(\nabla^\S_{e_i}\bigl(c(e_j)s\bigr)-c\bigl(\nabla^{T}_{e_i}e_j\bigr)s\bigr)\\
&+e^i\otimes e^j\otimes c(e_i)\nabla^{\S}_{e_j}s
 +e^i\otimes e^j\otimes c(e_i)c(e_j)s.
\end{aligned}
\]
where we used 
\[\nabla=e^i\nabla_{e_i},\ \left(\nabla^{T, *}_{e_i}e^j\right)(e_k)=-e^j\left(\nabla^{T}_{e_i}e_k\right)\]
and the Einstein convention to omit the summation over \(i, j\).
Then
\begin{equation}\label{second order example}
    \begin{aligned}
    \Phi(\tau^{(2)}) s =&\bigl(\iota_{\tau^{(2)}}\circ \A^2\bigr) s\\
    =&X_1^i X_2^j \nabla^\S_{e_i}\nabla^\S_{e_j}s-X_1^i X_2^j \nabla^\S_{\nabla^{T}_{e_i}e_j} s+Y_1^i X_3^j\nabla^\S_{e_j}(c(e_i)s)-X_3^i Y_1^j c\bigl(\nabla^{T}_{e_i}e_j\bigr)s \\
    &+X_4^i Y_2^j \bigl(c(e_j)\nabla^\S_{e_i}s\bigr)+Y_3^i Y_4^j c(e_i)c(e_j) s\\
    =&\nabla^\S_{X_1}\bigl(X_2^j \nabla^\S_{e_j}s\bigr)-\bigl(\nabla^T_{X_1}X_2^j\bigr)\bigl(\nabla^\S_{e_j}s\bigr) -\left(\nabla^\S_{\nabla^{T}_{X_1}\bigl(X_2^j e_j\bigr)-\bigl(\nabla^{T}_{X_1} X_2^j\bigr) e_j} \, s\right)\\
    &+Y_1^i\bigl(c(e_i)\nabla^\S_{X_3} s+c\bigl(\nabla_{X_3}^{\B, \F} e_i\bigr)s\bigr)-Y_1^jc\bigl(\nabla^{T}_{X_3}e_j\bigr)s\\
    &+c(Y_2)\nabla^\S_{X_4} s+c(Y_3) c(Y_4) s\\
    =& \left(\nabla^\S_{X_1}\nabla^\S_{X_2} -\nabla^\S_{\nabla^{T}_{X_1} X_2} \right)s+c(Y_1)\nabla^\S_{X_3} s+c(Y_2)\nabla^\S_{X_4} s+c(Y_3) c(Y_4) s.\\
\end{aligned}
\end{equation}
The first term in Equation (\ref{second order example}) is exactly \(\nabla^{\S, 2}_{X_1, X_2}\).
\end{eg}

To analyze further the explicit structure of $\Phi_U$, we denote
\[
\mathscr T_m(\S)\colonequals
\Gamma_U\left(\left(E^{\B,\F,*}\right)^{\otimes m}\otimes\S\right).
\]
In particular, $\A^m(\Gamma_U(\S))\subset\mathscr T_m(\S)$.
Then we define the typed iterates
\[
\A^{(0)}_{[m]}\colonequals\one_{\mathscr T_m(\S)},\quad
\A^{(r+1)}_{[m]}\colonequals
\A^{(1)}_{[m+r]}\circ\A^{(r)}_{[m]}\quad(r\geq0),
\]
where $\A^{(1)}_{[q]}=\A^{(q)}$ is the induced
$E^{\B,\F}$-covariant derivative on
$\left(E^{\B,\F,*}\right)^{\otimes q}\otimes\S$, from
$\nabla^{T,*}$ on each $E^{\B,\F,*}$-slot and from $\A$ on $\S$.
Thus $\A^{(r)}_{[m]}:\mathscr T_m(\S)\to\mathscr T_{m+r}(\S)$,
$\A^{(r)}_{[0]}=\A^r$, and we have the associativity
\[
\A^{(r)}_{[m+l]}\circ\A^{(l)}_{[m]}=\A^{(r+l)}_{[m]}.
\]
For a homogeneous tensor
$\tau^{(l)}=X_1\otimes\cdots\otimes X_l$, the ordered contraction
extends naturally to
\[
\iota_{\tau^{(l)}}:\mathscr T_m(\S)\to\mathscr T_{m-l}(\S)\]
for all $m\geq l$, and extends by linearity to arbitrary homogeneous
tensor fields.

We use the following \textit{typed-commutator} on $\mathscr T_m(\S)$:
\[
\begin{aligned}
&[\A^{(r)},\iota_{\tau^{(l)}}]_{[m]}:
\mathscr T_m(\S)\to\mathscr T_{m+r-l}(\S),\\
&[\A^{(r)},\iota_{\tau^{(l)}}]_{[m]}
\colonequals\A^{(r)}_{[m-l]}\circ\iota_{\tau^{(l)}}
-\iota_{\tau^{(l)}}^{[r]}\circ\A^{(r)}_{[m]}.
\end{aligned}
\]

\begin{prop}[Typed-commutator identity]\label{prop:typed-comm}
For all $r,l\geq1$, $m\geq l$, $\tau^{(l)}\in\Gamma_U((E^{\B,\F})^{\otimes l})$, $\Xi\in\mathscr T_m(\S)$, and $X_1,\ldots,X_{m+r-l}\in\Gamma_U(E^{\B,\F})$, we have
\begin{equation}\label{eq:master-comm}
\begin{aligned}
&\bigl([\A^{(r)},\iota_{\tau^{(l)}}]_{[m]}\Xi\bigr)
 (X_1,\ldots,X_{m+r-l})\\
&\quad=\sum_{j=1}^r\ \sum_{1\leq i_1<\cdots<i_j\leq r}
\Bigl(\iota^{[r-j]}_{(\nabla^{T,j}\tau^{(l)})(X_{i_1},\ldots,X_{i_j})}
\A^{(r-j)}_{[m]}\Xi\Bigr)\\
&\hspace{42mm}(X_{k_1},\ldots,X_{k_{r-j}},X_{r+1},\ldots,X_{m+r-l}),
\end{aligned}
\end{equation}
where $k_1<\cdots<k_{r-j}$ are the complementary indices in $\{1,\ldots,r\}$. Here $\nabla^{T,j}$ is the $j$-fold iterated pulled-back connection on $(E^{\B,\F})^{\otimes l}$, and empty lists of arguments are omitted.
\end{prop}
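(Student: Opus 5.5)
We argue by induction on \(r\), using the associativity \(\A^{(r+1)}_{[m]}=\A^{(1)}_{[m+r]}\circ\A^{(r)}_{[m]}\) of the typed iterates. The whole identity is a Leibniz rule for contraction against a covariantly differentiated tensor, so the base case \(r=1\) carries the geometric content and the induction is a combinatorial bookkeeping of which derivative slots fall on \(\tau^{(l)}\).

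\emph{Base case \(r=1\).} We must show
\[
\bigl(\A^{(1)}_{[m-l]}\iota_{\tau^{(l)}}\Xi-\iota^{[1]}_{\tau^{(l)}}\A^{(1)}_{[m]}\Xi\bigr)(X_1,\ldots)
=\bigl(\iota_{(\nabla^{T}\tau^{(l)})(X_1)}\Xi\bigr)(X_2,\ldots).
\]
Both sides are \(C^\infty\)-linear in \(\tau^{(l)}\) up to the derivative term, so it suffices to take \(\tau^{(l)}=Z_1\otimes\cdots\otimes Z_l\). Expand both \(\A^{(1)}\) terms with the explicit formula for \((\A^{(k)}\Xi)(X_0,\ldots,X_k)\) recorded after Definition \ref{Def-iterated-K-connection}. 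The \(\A_{X_1}\) terms acting on the spinor value coincide on both sides, and so do the correction terms \(\nabla^{\B,\F}_{X_1}\) on the uncontracted slots. What remains is \(\sum_{j}\Xi(\ldots,Z_1,\ldots,\nabla^{\B,\F}_{X_1}Z_j,\ldots,Z_l,\ldots)\), which is exactly \(\iota_{\nabla^T_{X_1}\tau^{(l)}}\Xi\) by the Leibniz rule for \(\nabla^T\). One point needs care: \(\A\) differs from \(\nabla^\S\) by the tensorial \(\kappa\), and \(\kappa\) cancels identically because it involves no derivative of the slot arguments. This step is routine.

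\emph{Inductive step.} Assume the formula for \(r\). Write
\[
\A^{(r+1)}_{[m-l]}\iota_{\tau}=\A^{(1)}_{[m-l+r]}\A^{(r)}_{[m-l]}\iota_\tau .
\]
Replace \(\A^{(r)}_{[m-l]}\iota_\tau\) by \(\iota^{[r]}_\tau\A^{(r)}_{[m]}\) plus the correction sum given by the induction hypothesis. Then commute \(\A^{(1)}\) past each contraction, using a version of the base case for the shifted contraction \(\iota^{[q]}_{\sigma}\) (free slots in front) with \(\sigma=\tau\) or \(\sigma=(\nabla^{T,j}\tau)(X_{i_1},\ldots,X_{i_j})\). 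This shifted version follows from the same slot computation, because the free leading slots are just differentiated by \(\nabla^{T,*}\) on both sides.

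Each resulting term corresponds to deciding whether the new derivative slot \(X_1\), placed first, lands on \(\tau\) (raising \(j\) to \(j+1\) with \(i\)-set \(\{1\}\cup(\text{shifted set})\)) or on \(\Xi\) (keeping \(j\) and raising \(r-j\)). The key subtlety is that when \(X_1\) differentiates an already-evaluated \((\nabla^{T,j}\tau)(X_{i_1},\ldots)\), the derivative also hits the arguments \(X_{i_k}\). These extra pieces must cancel against the \(\nabla^{T,*}\) corrections acting on the corresponding free slots. Combining them reproduces \((\nabla^{T,j+1}\tau)(X_1,X_{i_1},\ldots)\), with the new covector first, matching the convention that iterated derivatives place the new slot first.

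\emph{Main obstacle.} The hard part is this index and order bookkeeping: verifying that the subsets \(\{i_1<\cdots<i_j\}\subset\{1,\ldots,r+1\}\) are produced exactly once and with the correct complementary order. I would organize it as a bijection between subsets of \(\{1,\ldots,r+1\}\) and the pairs (subset of \(\{2,\ldots,r+1\}\), choice of whether \(1\) is included). This is a Pascal-type recursion that closes the induction.
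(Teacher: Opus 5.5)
Your proposal is correct and is essentially the paper's proof: the base case is the single-commutator identity from the dual-connection identity on each contracted slot plus the Leibniz rule, and the induction applies the new derivative to the full expansion (including the $j=0$ term). There, the derivatives of the evaluated arguments $X_{i_k}$ cancel the free-slot corrections to give $\nabla^{T,j+1}\tau^{(l)}$ with the new slot first, and each subset of derivative indices arises exactly once. Two cosmetic points: the reduction to decomposable $\tau^{(l)}$ rests on additivity and locality rather than $C^\infty$-linearity, and for $\sigma=(\nabla^{T,j}\tau^{(l)})(X_{i_1},\ldots,X_{i_j})$ the ``shifted base case'' is really the Leibniz computation you describe afterwards, since $\sigma$ depends on the slot arguments.
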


\begin{proof}
The dual connection satisfies
\[
\rho_{E^{\B,\F}}(X_0)\bigl(\xi_1(X)\bigr)
=(\nabla^{T,*}_{X_0}\xi_1)(X)
+\xi_1(\nabla^{\B,\F}_{X_0}X).
\]
Applying this identity to each contracted pair, and the Leibniz rule to the remaining tensor and spinor factors, gives
\begin{equation}\label{single commutator}
\A^{(1)}_{[m-l]}\bigl(\iota_{\tau^{(l)}}\Xi\bigr)
=\iota_{\nabla^T\tau^{(l)}}\Xi
+\iota_{\tau^{(l)}}^{[1]}\A^{(1)}_{[m]}\Xi.
\end{equation}
This calculation applies first to decomposable tensors and then to arbitrary tensor fields by linearity and locality. In particular, for $l=1$,
\begin{equation}\label{key identity}
\left[\A^{(1)},\iota_X\right]_{[m]}\Xi
=\iota_{\nabla^T X}\Xi.
\end{equation}

We prove by induction on $r$ the formula obtained from \eqref{eq:master-comm} by replacing its left-hand side with
\[
\bigl(\A^{(r)}_{[m-l]}(\iota_{\tau^{(l)}}\Xi)\bigr)
(X_1,\ldots,X_{m+r-l})
\]
and letting $j$ start from $0$, with $\nabla^{T,0}\tau^{(l)}=\tau^{(l)}$. The case $r=0$ is immediate, and $r=1$ is \eqref{single commutator}.

Assume the formula holds for $r$. Apply the induced connection, with new derivative argument $X_0$. In each summand, the Leibniz rule differentiates either $\nabla^{T,j}\tau^{(l)}$ or $\A^{(r-j)}_{[m]}\Xi$. The derivatives of the argument fields cancel with the correction terms for the free covector slots. For example,
\[
\begin{aligned}
&\nabla^{\B,\F}_{X_0}\bigl((\nabla^{T,j}\tau^{(l)})(X_{i_1},\ldots,X_{i_j})\bigr)\\
&\quad-\sum_{a=1}^j(\nabla^{T,j}\tau^{(l)})
 (X_{i_1},\ldots,\nabla^{\B,\F}_{X_0}X_{i_a},\ldots,X_{i_j})\\
&\qquad=(\nabla^{T,j+1}\tau^{(l)})(X_0,X_{i_1},\ldots,X_{i_j}).
\end{aligned}
\]
The same rule for $\A^{(r-j)}_{[m]}\Xi$ gives $\A^{(r-j+1)}_{[m]}\Xi$, with $X_0$ first among its derivative arguments. Thus in the first case $0$ joins the indices $i_1,\ldots,i_j$, and in the second case it joins the complementary indices $k_1,\ldots,k_{r-j}$. Every subset of $\{0,\ldots,r\}$ occurs exactly once, with both lists in their original order. Relabeling the arguments proves the formula for $r+1$.

The term with $j=0$ is
\[
\bigl(\iota_{\tau^{(l)}}^{[r]}\A^{(r)}_{[m]}\Xi\bigr)
(X_1,\ldots,X_{m+r-l}).
\]
Subtracting this term gives \eqref{eq:master-comm}.
\end{proof}

\begin{thm}\label{Module for associative algebra}
For any open subset $U\subset M$, and any tensor fields
$\mathcal X\in\Gamma_U\left(T(E^{\B,\F})\right)$,
$\mathcal Y\in\Pi_U\left(T(E^{\B,\F})\right)$, we have
\begin{equation}\label{associativity}
\Phi_U(\mathcal X\otimes\mathcal Y)
=\Phi_U(\mathcal X)\Phi_U(\mathcal Y).
\end{equation}
In particular, $\Phi_U$ defines a module structure on $\Gamma_U(\S)$
for the associative algebra $\Pi_U\left(T(E^{\B,\F})\right)$.
\end{thm}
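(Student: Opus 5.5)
By linearity in both arguments it suffices to take $\mathcal X=\tau^{(r)}$ homogeneous of degree $r$ and $\mathcal Y=\sigma^{(l)}$ parallel and homogeneous of degree $l$. Degree-zero factors are handled directly: if $r=0$ then $\Phi_U(f\otimes\sigma)=f\,\Phi_U(\sigma)$, since contraction is $C^\infty$-linear. If $l=0$, then $\sigma$ is a parallel function on each component, hence locally constant, and it passes through every $\A$. So assume $r,l\geq1$. Under the ordered contraction conventions fixed above, $\iota_{\tau^{(r)}\otimes\sigma^{(l)}}=\iota_{\tau^{(r)}}\circ\iota^{[r]}_{\sigma^{(l)}}$ on $\mathscr T_{r+l}(\S)$: the first $r$ slots are contracted with $\tau$ and the next $l$ with $\sigma$. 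By the associativity of the typed iterates, $\A^{r+l}=\A^{(r)}_{[l]}\circ\A^{l}$. Hence
\[
\Phi_U(\tau\otimes\sigma)s=\iota_{\tau}\,\iota^{[r]}_{\sigma}\,\A^{(r)}_{[l]}\A^l s .
\]

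The key step is to commute $\iota^{[r]}_\sigma$ past $\A^{(r)}_{[l]}$. We apply Proposition \ref{prop:typed-comm} with $m=l$ and $\Xi=\A^l s$. It gives
\[
\A^{(r)}_{[0]}\bigl(\iota_{\sigma}\A^l s\bigr)-\iota^{[r]}_{\sigma}\A^{(r)}_{[l]}\A^l s
=\sum_{j=1}^r\sum_{i_1<\cdots<i_j}\iota^{[r-j]}_{(\nabla^{T,j}\sigma)(\ldots)}\A^{(r-j)}_{[l]}\A^l s .
\]
Every term on the right contains $\nabla^{T,j}\sigma$ with $j\geq1$. These all vanish because $\sigma\in\Pi_U$: first $\nabla^T\sigma=0$, and then inductively $\nabla^{T,j}\sigma=\nabla^{T,*}$-iterates of zero, which are zero. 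Therefore $\iota^{[r]}_\sigma\A^{(r)}_{[l]}\A^l s=\A^r(\iota_\sigma\A^l s)=\A^r(\Phi_U(\sigma)s)$, using $\A^{(r)}_{[0]}=\A^r$. Contracting with $\tau$ then yields $\Phi_U(\tau)\Phi_U(\sigma)s$, which is \eqref{associativity}.

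I expect the main obstacle to be bookkeeping rather than any new idea. Three points need checking. First, the slot placement (new covectors go in front) must make $\iota_{\tau\otimes\sigma}$ factor exactly as $\iota_\tau\circ\iota^{[r]}_\sigma$. Second, the identity $\A^{(r)}_{[l]}\A^l=\A^{r+l}$ must hold; it follows from the recursive definition by induction on $r$. Third, $\Pi_U$ is defined via the pulled-back connection $\nabla^{\B,\F}$, where fermionic directions act trivially. This is compatible with the $\nabla^{T,*}$ appearing in $\A^{(k)}$, so parallelism in the $E^\B$-directions alone kills every correction term. In particular, the Clifford part $\kappa$ of $\A$ never differentiates $\sigma$, because the contracted slots see only $\nabla^{T,*}$.

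Finally, $\Phi_U(1)=\one$, and the associative algebra $\Pi_U(T(E^{\B,\F}))$ is closed under $\otimes$ since the Leibniz rule makes the tensor product of parallel sections parallel. Restricting \eqref{associativity} to $\mathcal X\in\Pi_U$ therefore gives a unital algebra homomorphism into $\operatorname{End}\Gamma_U(\S)$, which is the module structure.
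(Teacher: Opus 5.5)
Your proposal is correct and is essentially the paper's proof. You reduce to homogeneous $\tau^{(r)}$ and parallel $\sigma^{(l)}$, factor $\iota_{\tau\otimes\sigma}=\iota_\tau\iota^{[r]}_\sigma$ and $\A^{r+l}=\A^{(r)}_{[l]}\A^l$, and kill the typed commutator $[\A^{(r)},\iota_\sigma]_{[l]}$ via Proposition~\ref{prop:typed-comm} because every correction term involves $\nabla^{T,j}\sigma$ with $j\geq1$. The paper's only addition is to invoke ``locality'' alongside linearity in the reduction to homogeneous pieces, which covers sections over a general $U$ whose degree support is only locally finite.
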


\begin{proof}\label{modulestructure}
By linearity and locality, we only need to prove (\ref{associativity})
for homogeneous tensor fields
$\tau^{(k)}\in\Gamma_U\left((E^{\B,\F})^{\otimes k}\right)$ and
$\eta^{(l)}\in\Pi_U\left((E^{\B,\F})^{\otimes l}\right)$.
The case $k=0$ follows from $\Phi_U(f)s=fs$. If $l=0$, then
$\eta^{(0)}$ is a parallel function and the Leibniz rule gives
$\A^k(\eta^{(0)}s)=\eta^{(0)}\A^k s$, proving this case as well.
For $k,l\geq1$, we have
\[
\Phi_U\left(\tau^{(k)}\otimes\eta^{(l)}\right)
=\iota_{\tau^{(k)}\otimes\eta^{(l)}}\circ\A^{k+l},
\]
while
\[
\begin{aligned}
\Phi_U\left(\tau^{(k)}\right)\Phi_U\left(\eta^{(l)}\right)
&=\iota_{\tau^{(k)}}\A^k\iota_{\eta^{(l)}}\A^l\\
&=\iota_{\tau^{(k)}}\iota_{\eta^{(l)}}^{[k]}
\A^{(k)}_{[l]}\A^l
+\iota_{\tau^{(k)}}
[\A^{(k)},\iota_{\eta^{(l)}}]_{[l]}\A^l.
\end{aligned}
\]
Since $\eta^{(l)}$ is parallel under $\nabla^T$, we apply formula
(\ref{eq:master-comm}) to find that the second term vanishes, while the
first term is just
\[
\iota_{\tau^{(k)}}\iota_{\eta^{(l)}}^{[k]}
\A^{(k)}_{[l]}\A^l
=\iota_{\tau^{(k)}\otimes\eta^{(l)}}\A^{k+l}.
\]
Therefore
\[
\Phi_U(\tau^{(k)}\otimes\eta^{(l)})
=\Phi_U(\tau^{(k)})\Phi_U(\eta^{(l)}),
\]
and thus in particular we have a module structure $\Phi_U$ on
$\Gamma_U(\S)$ for the associative algebra
$\Pi_U\left(T(E^{\B,\F})\right)$.
\end{proof}

The proof of Theorem \ref{Module for associative algebra} is conceptual.
Next, we provide an equivalent construction for the action $\Phi_U$,
which is less conceptual but more useful in computation.
In this recursive construction, set $\Phi'_U(f)s=fs$ at degree zero.

\begin{prop}\label{equivalent construction}
Define recursively the action \(\Phi'_U\) on a local homogeneous tensor field in \(\Gamma_U\left(T\left(E^{\B, \F}\right)\right)\) on a section \(s\in \Gamma_U(\S)\) as :
  \begin{itemize}
      \item For \(\left(X^\B, X^\F\right)\in \Gamma_U\left(E^{\B, \F}\right)\), we define \begin{equation}
          \Phi'_U(\left(X^\B, X^\F\right)) s =\nabla^\S_{X^\B} s +c\left(X^\F\right) s; \label{base case}
      \end{equation}
      \item For \(\left(X^\B, X^\F\right)\in \Gamma_U\left(E^{\B, \F}\right)\) and \(Y_1, \cdots, Y_k \in \Gamma_U\left(T\left(E^{\B, \F}\right)\right)\) arbitrary homogeneous tensor fields, we define
      \begin{equation} \label{recursive case}
             \begin{aligned}
          \Phi'_U\left( \left(X^\B, X^\F\right) \otimes Y_1\otimes \cdots \otimes Y_k\right) s =&\left(\nabla^\S_{X^\B}+c\left(X^\F\right)\right) \left(\Phi'_U\left(Y_1\otimes \cdots \otimes Y_k\right) s\right) \\
          &-\Phi'_U\left(\nabla^T_{X^\B} \left(Y_1\otimes \cdots \otimes Y_k\right) \right) s,
      \end{aligned}
      \end{equation}
  \end{itemize}
and extends by linearity. Then \(\Phi_U =\Phi'_U\). In particular, we have the same module structure on \(\Gamma(\S)\) for the associative algebra \(\Pi_U\left(T\left(E^{\B, \F}\right)\right)\) as constructed before.
\end{prop}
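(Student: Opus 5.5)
We argue by induction on the tensor degree $k$, and show that $\Phi_U$ satisfies the same recursion that defines $\Phi'_U$. Since $\Phi'_U$ is determined by \eqref{base case} and \eqref{recursive case} together with linearity, the two then agree. Degree zero holds by definition. Degree one is Example \ref{first order eg}, because $\iota_{(X^\B,X^\F)}\A s=\nabla^\S_{X^\B}s+c(X^\F)s$, which is \eqref{base case}.

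For the inductive step, fix $Z=(X^\B,X^\F)\in\Gamma_U(E^{\B,\F})$ and a homogeneous $\eta\in\Gamma_U((E^{\B,\F})^{\otimes k})$, with $\eta=Y_1\otimes\cdots\otimes Y_k$ after regrouping. We need to show
\[
\Phi_U(Z\otimes\eta)s=\A_Z\bigl(\Phi_U(\eta)s\bigr)-\Phi_U\bigl(\nabla^T_{X^\B}\eta\bigr)s .
\]
The left side is $\iota_{Z\otimes\eta}\A^{k+1}s=\iota_Z\,\iota^{[1]}_\eta\,\A^{(1)}_{[k]}\A^k s$, since $\A^{k+1}=\A^{(k)}\circ\A^k=\A^{(1)}_{[k]}\A^k$. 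Here the contraction with $Z$ acts on the leftmost new slot, and $\eta$ contracts the next $k$ slots in order.

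Next apply the single-commutator identity \eqref{single commutator} from the proof of Proposition \ref{prop:typed-comm}, with $m=l=k$ and $\Xi=\A^k s$:
\[
\iota^{[1]}_\eta\A^{(1)}_{[k]}\Xi=\A^{(1)}_{[0]}\bigl(\iota_\eta\Xi\bigr)-\iota_{\nabla^T\eta}\Xi .
\]
In this identity $\eta$ is an arbitrary tensor field, not a parallel one, so the identity applies. Contract with $Z$ in the free first slot. The first term gives $\A_Z(\iota_\eta\A^k s)=\A_Z(\Phi_U(\eta)s)$, because $\A^{(1)}_{[0]}=\A$. The second term gives $\iota_{\nabla^{\B,\F}_Z\eta}\A^k s$. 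By definition, $\nabla^{\B,\F}_Z=\nabla^T_{\pi_\B Z}=\nabla^T_{X^\B}$, and this operation preserves tensor degree $k$. So the second term is $\Phi_U(\nabla^T_{X^\B}\eta)s$.

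This gives exactly the recursion \eqref{recursive case} for $\Phi_U$. Because $\nabla^T_{X^\B}\eta$ has degree $k$, the induction hypothesis turns the recursion for $\Phi_U$ into the recursion for $\Phi'_U$, so $\Phi_U=\Phi'_U$ in degree $k+1$. The module statement then follows from Theorem \ref{Module for associative algebra}.

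The step most likely to cause trouble is well-definedness and linearity. The recursion for $\Phi'_U$ is stated on expressions $(X^\B,X^\F)\otimes Y_1\otimes\cdots\otimes Y_k$, and a given tensor field has many such decompositions, including ones that differ by moving functions $f$ between factors. I would avoid checking this directly. Instead, note that the right side of the recursion is exactly the expression just derived from $\Phi_U$, which is defined intrinsically by contraction. The derivation shows that the right side depends only on the tensor $Z\otimes\eta$. This makes $\Phi'_U$ well defined and equal to $\Phi_U$ simultaneously.

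The remaining point is to confirm that the slot-ordering conventions match. Both the new derivative covector and $Z$ occupy the leftmost slot, which is what makes $\iota_{Z\otimes\eta}=\iota_Z\iota^{[1]}_\eta$ valid. Since \eqref{single commutator} was stated only as a consequence of the Leibniz rule, I would briefly re-verify it on decomposable $\eta$ and $\Xi$.
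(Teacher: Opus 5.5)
Your proposal is correct and follows essentially the same route as the paper. The paper also shows that $\Phi_U$ satisfies the recursion. It writes $\iota_{X\otimes\Y}\A^{k+1}s$ as $\iota_X\A(\iota_\Y\A^k s)$ minus the typed commutator term $\iota_X[\A^{(1)},\iota_\Y]_{[k]}\A^k s$, and identifies that term with $\Phi_U(\nabla^T_{X^\B}\Y)s$ using the first-order case of Proposition \ref{prop:typed-comm}, namely \eqref{single commutator}. Your explicit induction on degree and your remark on the well-definedness of $\Phi'_U$ only make that same argument more careful.
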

\begin{proof}
    We just need to show that \(\Phi_U \) satisfies the same recursive equations as \(\Phi'_U\) does in (\ref{base case}) and (\ref{recursive case}). (\ref{base case}) is clear from Example \ref{first order eg}. For (\ref{recursive case}), take \(X=\left(X^\B, X^\F\right), Y_1, \dots, Y_k \in \Gamma\left(\left(E^{\B, \F}\right)\right)\). Denote \(Y_1\otimes \cdots \otimes Y_k\) by \(\Y\) for simplicity. We prove (\ref{recursive case}) only for such \(\Y\) without loss of generality. By definition, we compute 
    \begin{align*}
        \Phi_U \left(X\otimes \Y \right) s=&\iota_{X\otimes \Y} \A^{k+1} s\\
        =&\iota_X\A \left(\iota_\Y \A^{k} s\right)-\iota_X \left[\A^{(1)}, \iota_\Y\right]_{[k]} \A^k s\\
        =&\Phi_U (X) \left(\Phi_U \left(\Y\right) s\right)- \iota_{\left(X^\B, X^\F\right)} \iota_{\nabla^T \Y} \A^k s\\
        =& \left(\nabla^\S_{X^\B}+c\left(X^\F\right)\right) \left(\Phi_U \left(\Y\right) s\right)- \iota_{\left(\nabla_{X^\B}^T \Y\right)}\A^k s\\
        =&\left(\nabla^\S_{X^\B}+c\left(X^\F\right)\right) \left(\Phi_U \left(\Y\right) s\right)- \Phi_U \left(\left(\nabla_{X^\B}^T \Y\right)\right) s,
    \end{align*}
    where in the third line we used Proposition \ref{prop:typed-comm}. 
\end{proof}

\begin{remark}\label{interaction interpretation}
    From Proposition \ref{equivalent construction}, we can see that for the associativity to hold, instead of performing a naive multiplication of \(\A_X\), we naturally require the existence of an extra term \(\Phi_U \left(\left(\nabla_{X^\B}^T \Y\right)\right)\). For bosonic components in \(\Y\), this is the definition of iterated covariant derivatives. Therefore, conceptually, this term represents an ``internal'' action of the bosonic zero modes on the fermionic zero modes.  We believe this is a shadow of the interaction between bosons and fermions in a non-free super quantum field theory.
\end{remark}

\begin{remark}
    Also from Proposition \ref{equivalent construction}, we can see clearly that \(\Phi_U \) recovers the module action for the associative algebra in the purely bosonic case (cf. Section 5 of \cite{Huang2026}), and in the purely fermionic case. In other words, by definition of the higher-order covariant derivatives, for \(X_1^\B, \dots, X_k^\B \in \Gamma_U(E^\B)\), we have
    \begin{align*}
    \Phi_U \left((X_1^\B,0)\otimes \dots \otimes (X_k^\B,0)\right) s=& \nabla^{\S, k}_{X_1^\B, \dots,  X_k^\B}\, s\\
    =&\nabla^\S_{X_1} \big(\nabla^{\S,\, k-1}_{X_2, \cdots , X_k}s\big)
-\sum_{j=2}^k \nabla^{\S,\, k-1}_{X_2, \dots , \left(\nabla^{\B}_{X_1}X_j\right), \dots , X_k}s;
    \end{align*}
and for \(X_1^\F, \dots, X_k^\F \in \Gamma_U(E^\F)\), we have 
\[\Phi\left((0, X_1^\F) \otimes \cdots \otimes (0, X_k^\F)\right) s = c(X_1^\F)\cdots c(X_k^\F) s.\]
\end{remark}

\subsection{A sheaf of canonically twisted modules}
\label{subsection-sheaves of twisted modules}

 We first work on a nonempty connected open
set \(U\); arbitrary opens will be considered below.

By Theorem \ref{holonomy principle}, we identify
\[
\Pi_U\left(T\left(E^{\B,\F}\right)\right)
\cong
\left(T\left(E_p^{\B,\F}\right)\right)^{
\hol\left(\widehat{\nabla}^T,U\right)}
\]
for a fixed \(p\in U\).
Hence by Theorem \ref{Module for associative algebra},
\(\Gamma_U(\S)\) is a
\(\left(T\left(E_p^{\B,\F}\right)\right)^{
\hol\left(\widehat{\nabla}^T,U\right)}\)-module.
Since
\(\left(T\left(E_p^{\B,\F}\right)\right)^{
\hol\left(\widehat{\nabla}^T,U\right)}\)
is a subalgebra of \(T\left(E_p^{\B,\F}\right)\),
we have the induced \(T\left(E_p^{\B,\F}\right)\)-module
\begin{equation}\label{zero mode fiberwise induced module}
M_{p,U}\coloneqq
T\left(E_p^{\B,\F}\right)
\otimes_{
T\left(E_p^{\B,\F}\right)^{
\hol\left(\widehat{\nabla}^T,U\right)}}
\Gamma_U(\S).
\end{equation}
The tensor-product parity descends to \(M_{p,U}\) by the parity
compatibility of \(\Phi_U\). We place \(M_{p,U}\) in weight zero.
Then by Theorem \ref{bosonic-fermionic fiberwise module},
\begin{equation}\label{bosonic-fermionic fiberwise induced module}
W_{p,U}\coloneqq
T\left(\left(\widehat E_p\right)^{\B,\F,-}_{\Z}\right)
\otimes M_{p,U}
\end{equation}
has a left module structure for the MOSVA \(V_{p,U}\cong V_U\), where
\[
\left(\widehat E_p\right)^{\B,\F,-}_{\Z}
=(E_p^\B\oplus E_p^\F)\otimes t^{-1}\C[t^{-1}].
\]
Let \(W^0_{p,U}\) be the \(V_{p,U}\)-submodule generated by elements
of the form
\begin{equation}\label{bosonic-fermionic fiberwise parallel submodule}
\bar s_{p,U}\coloneqq
1\otimes\left(
1\otimes_{
\left(T\left(E_p^{\B,\F}\right)\right)^{
\hol\left(\widehat{\nabla}^T,U\right)}}s
\right)\in W_{p,U},
\end{equation}
where \(s\in\Gamma_U(\S)\) and
\(1\otimes_{
\left(T\left(E_p^{\B,\F}\right)\right)^{
\hol\left(\widehat{\nabla}^T,U\right)}}s\)
is the projection image of \(1\otimes s\) from
\(T\left(E_p^{\B,\F}\right)\otimes\Gamma_U(\S)\)
to \(M_{p,U}\).

We now show that our construction above is in fact independent
of our choice of points.

\begin{prop}\label{prop-bosonic module sheaf}
For a nonempty connected open neighborhood \(U\) of \(p\),
\(W^0_{p,U}\) is a left module for the MOSVA \(V_U\), and the structure
of \(W^0_{p,U}\) is independent of the choice of \(p\) up to canonical
isomorphism. We therefore also denote \(W^0_{p,U}\) simply by \(W^0_U\).
In particular, we have a canonical \(V_U\)-module generated by the
space of sections \(\Gamma_U(\S)\).
\end{prop}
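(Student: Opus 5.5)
The plan has two parts. First, restricting the $T(E_p^{\B,\F})$-module structure of $W_{p,U}$ gives a $V_{p,U}$-module. Second, parallel transport along any anchored path from $p$ to $q$ gives an isomorphism $W^0_{p,U}\cong W^0_{q,U}$, and this isomorphism does not depend on the path.

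\textbf{Module structure.} First check the input for Theorem \ref{bosonic-fermionic fiberwise module}. Take $H=M_{p,U}$ with $\h^\B=E^\B_p$ and $\h^\F=E^\F_p$. Then $H$ is a left $T(E_p^{\B,\F})$-module by construction. For the parity involution, set $\theta_H$ equal to the tensor-product parity on $T(E_p^{\B,\F})$ tensored with $\theta_\S$. This descends to the quotient $M_{p,U}$ because $\Phi_U$ is parity compatible: $c(e)$ is odd and $\nabla^\S$ is even, so $\Phi_U(\tau)$ has the parity of $\tau$. The relation $\theta_H a=(-1)^{|a|}a\theta_H$ is then immediate.

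Theorem \ref{bosonic-fermionic fiberwise module} now makes $W_{p,U}$ a canonically twisted $\mathcal T^-_p$-module. Restricting the vertex operator map to the subalgebra $V_{p,U}$ of Corollary \ref{fixed point bosonic mosva} preserves every axiom, so $W_{p,U}$ is a $V_{p,U}$-module. The submodule $W^0_{p,U}$ generated by the $\bar s_{p,U}$ is then a $V_{p,U}$-module. Through the identification \eqref{Vector space of bosonic mosva} it becomes a $V_U$-module.

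\textbf{Transport isomorphism.} For $q\in U$, choose a real anchored path $a$ from $p$ to $q$ inside $U$. Then $P_a$ gives algebra isomorphisms $T(E_p^{\B,\F})\to T(E_q^{\B,\F})$ and $\mathcal T^-_p\to\mathcal T^-_q$. It conjugates holonomy groups, $\hol(\widehat\nabla^T,U)\mapsto P_a\hol(\widehat\nabla^T,U)P_a^{-1}=\Hol_q(\widehat\nabla^T,U)$, so it maps fixed subalgebras onto fixed subalgebras. Since $\ev_q=P_a\ev_p$ on parallel sections, this map is compatible with the identifications of both fixed subalgebras with $\Pi_U(T(E^{\B,\F}))$. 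Consequently the map
\[
x\otimes s\mapsto P_ax\otimes s
\]
is well defined on $M_{p,U}\to M_{q,U}$. Indeed, for a fixed $y$, corresponding to a parallel section $\eta$, we have $P_a(xy)\otimes s=P_ax\,P_ay\otimes s=P_ax\otimes\Phi_U(\eta)s$. This map is bijective with inverse induced by $P_a^{-1}$, and it preserves parity.

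Extend it to $W_{p,U}\to W_{q,U}$ by applying $Q_a=P^\B_a\oplus P^\F_a$ to every negative mode. The proof of Lemma \ref{automorphism} shows three things: $Q_a$ preserves the relations \eqref{relations module}; it commutes with field expansions and with the twisted normal ordering, since the zero-mode Clifford-like recursion only uses the forms, which $Q_a$ preserves; and it satisfies $P_a\Delta_p(x)=\Delta_q(x)P_a$. Together these give
\[
P_a Y_{W_{p,U}}(u,x)w=Y_{W_{q,U}}(P_au,x)P_aw.
\]
Combined with $\ev_q=P_a\ev_p$ on $V_U$, the map intertwines the two $V_U$-actions. It preserves $\mathbf d_W$ and $\theta_W$, and it sends the generator $\bar s_{p,U}$ to $\bar s_{q,U}$. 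Hence it restricts to a $V_U$-module isomorphism $W^0_{p,U}\to W^0_{q,U}$.

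\textbf{Path independence (the main obstacle).} The isomorphism on $W_{p,U}$ itself may depend on $a$, since two paths differ by a holonomy element $h$ acting on negative modes. On $W^0_{p,U}$, however, both induced maps are $V_U$-module homomorphisms that agree on the generators $\bar s_{p,U}\mapsto\bar s_{q,U}$. They therefore agree on the whole submodule generated by these elements, which is $W^0_{p,U}$. The care needed here is that "generated" means the span of the coefficients $u_n\bar s$ for $u\in V_U$, possibly iterated. Each such coefficient is sent, by intertwining, to a quantity determined only by $u$ (as a section) and $s$, which gives canonicity. The remaining routine check is the case $\hol$ nontrivial: I expect no issue beyond the well-definedness over the fixed subalgebra verified in the transport step.
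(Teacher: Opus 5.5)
Your argument for the module structure and for canonical independence of $p$ is correct, and it follows the paper's route. You transport along a real anchored path using the intertwining from the proof of Lemma~\ref{automorphism}, including $P_a\Delta_p(x)=\Delta_q(x)P_a$. Path independence then holds because two paths differ by a loop that fixes the parallel tensors and the ground vectors, so it acts trivially on the submodule they generate. Two of your steps make explicit what the paper only asserts: the check that $x\otimes s\mapsto P_ax\otimes s$ respects the balanced tensor product over the fixed subalgebras, and the phrasing of path independence as two $V_U$-module maps agreeing on generators.

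You omit the first step of the paper's proof, namely injectivity of $s\mapsto\bar s_{p,U}$.

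\textbf{Why it matters.} Injectivity is what lets the final sentence be read literally, with $\Gamma_U(\S)$ itself sitting inside $W^0_U$. It is also what Theorem~\ref{thm-bosonic-fermionic module sheaf} relies on for embedding the spinor sheaf in $\W_0$. Theorem~\ref{d and d*} relies on it too, since there the components act \emph{as} $\d_\S$, $\d_\S^*$, $\mathcal H_\S$ on embedded sections.

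\textbf{Why it is not formal.} For a subalgebra $B\subset A$, the map $N\to A\otimes_BN$ can have a kernel.

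\textbf{How the paper gets it.} It uses the generalized metric:
\begin{enumerate}
\item $\nabla^\B$ and $\nabla^\F$ preserve the positive definite $G$, so the holonomy group has compact closure on the real fiber.
\item Averaging over that closure degree by degree gives a projection $T(E_p^{\B,\F})\to T(E_p^{\B,\F})^{\hol(\widehat\nabla^T,U)}$. It fixes $1$ and is right linear over invariant tensors.
\item Tensoring this projection with $\Gamma_U(\S)$ gives a left inverse of $s\mapsto 1\otimes s$.
\item The PBW isomorphism $W_{p,U}\cong T\bigl((\widehat E_p)^{\B,\F,-}_{\Z}\bigr)\otimes M_{p,U}$ carries this injectivity into $W_{p,U}$.
\end{enumerate}

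Your transport map sends $\bar s_{p,U}$ to $\bar s_{q,U}$ directly. So, unlike the paper's identification of the two generator spaces through $\Gamma_U(\S)$, your canonical isomorphism does not itself need injectivity. You need it only for the embedding statement.
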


\begin{proof}
Since \(\nabla^\B\) and \(\nabla^\F\) preserve the positive generalized metrics,
the holonomy group on the underlying real fiber of \(E_p^{\B,\F}\) has compact closure.
Averaging over this closure degree by degree projects \(T(E_p^{\B,\F})\)
onto its holonomy-invariant subalgebra, fixes \(1\), and commutes with
right multiplication by invariant tensors. Tensoring this projection with
\(\Gamma_U(\S)\) over that subalgebra gives a left inverse to
\(s\mapsto1\otimes s\) in \eqref{zero mode fiberwise induced module}.
Thus \(s\mapsto\bar s_{p,U}\) is injective.

Let \(p'\) be another arbitrary point in \(U\).
The subspace of
\(W_{p,U}\) consisting of \(\bar s_{p,U}\)'s and the subspace of
\(W_{p',U}\) consisting of \(\bar s_{p',U}\)'s for
\(s\in\Gamma_U(\S)\) are both canonically isomorphic to
\(\Gamma_U(\S)\).
Thus the generators \(\bar s_{p,U}\) and \(\bar s_{p',U}\)
can be canonically identified.
Now \(V_U\) acts on \(W^0_{p,U}\) and \(W^0_{p',U}\) via vertex
operators \(Y_{W^0_{p,U}}\) and \(Y_{W^0_{p',U}}\), respectively.

Recall the definition \eqref{Real bosonic-fermonic vo} of these
vertex operators. Take any element \(v\in V_U\), viewed as a
parallel tensor field.
Parallel transport along a real anchored path from \(p\) to \(p'\)
in \(U\) identifies the induced constructions, leaving the section
\(s\) unchanged.
By the proof of Lemma \ref{automorphism}, it intertwines the naive vertex operators
and the correction \(\exp(\Delta(x))\), and hence the actual vertex
operators.
Any two such paths differ by a holonomy loop, which fixes the parallel
tensors and the ground vectors. It therefore acts trivially on the
submodule they generate.
The resulting identification is path independent and compatible
with concatenation.

Hence we can view \(W^0_{p,U}\) as a left \(V_U\)-module generated
canonically by the space \(\Gamma_U(\S)\).
We will for clarity simply denote each generator \(\bar s_{p,U}\)
by \(\bar s_U\), the vertex operator of \(v\) by
\(Y_{W^0_U}(v,x)\), and \(W^0_{p,U}\) by \(W^0_U\), from now on.
\end{proof}

For an arbitrary open set \(U=\bigsqcup_\alpha U_\alpha\),
with connected components \(U_\alpha\), we take the componentwise
product in each weight and then the direct sum over weights:
\[
W^0_U:=
\coprod_{k\in\Z_{\geq0}}
\prod_\alpha (W^0_{U_\alpha})_{(k)}.
\]
As in the construction of \(\V\), ordinary sheafification allows
finite weight support locally, without requiring one finite weight
support on all of \(U\).
We therefore distinguish ordinary sections from their finite-weight
subspace.

\begin{thm}\label{thm-bosonic-fermionic module sheaf}
The assignment \(U\to W^0_U\) and the natural restriction map
\(\operatorname{ResMod}_{\tilde U}^U:
W^0_U\to W^0_{\tilde U}\)
give a presheaf \(\W^0\) of modules over the presheaf
\(U\mapsto V_U^{\mathrm{fin}}\).

Let \(\W\) be the ordinary sheafification of \(\W^0\).
Denote its space of sections over an open set \(U\) by \(W_U\),
and its weight-\(k\) subsheaf by \(\W_k\). Then
\[
W_U^{\mathrm{fin}}:=
\coprod_{k\in\Z_{\geq0}}\Gamma(U,\W_k)
\subseteq W_U
\]
is a lower-bounded canonically twisted left
\(V_U^{\mathrm{fin}}\)-module.
The vertex coefficients extend to the ordinary sheaves
\(\V\) and \(\W\), with weight support and Laurent truncation
understood locally.
The spinor sheaf embeds in \(\W_0\).
In particular, \(\Gamma_M(\S)\) is embedded into
\(W_M^{\mathrm{fin}}\) as a weight-zero subspace.
\end{thm}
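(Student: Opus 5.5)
The proof proceeds in the same order as Theorem \ref{sheaf of bosonic-fermionic mosva}: first connected opens, then componentwise products, then restrictions, and finally sheafification. On a nonempty connected open \(U\), Proposition \ref{prop-bosonic module sheaf} already gives a \(V_U\)-module structure on \(W^0_U\). This structure is the restriction of the fiberwise canonically twisted module \(W_{p,U}\) of Theorem \ref{bosonic-fermionic fiberwise module}, built with \(H=M_{p,U}\). The required lift \(\theta_H\) is the descended spinor grading \(\theta_\S=J\otimes\one_L\). It satisfies \(\theta_H a=(-1)^{|a|}a\theta_H\) because each \(c(e)\) is odd, \(\nabla^\S\) is even, and \(\Phi_U\) is therefore parity compatible. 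Since \(V_U\cong V_{p,U}\subset\mathcal T^-_p\) is a subalgebra, restricting the twisted vertex operator map preserves all the axioms of Definition \ref{twisted-mod}: lower bound, \(\mathbf d\)- and \(g\)-grading, generalized rationality, and associativity. The submodule generated by the \(\bar s_U\) is stable under \(\mathbf d_W\) and \(\theta_W\), because the generators are homogeneous of weight zero after decomposing \(s\) into parity components. Hence \(W^0_U\) is a lower-bounded canonically twisted \(V_U\)-module with weights in \(\Z_{\geq0}\).

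Next I would handle restriction for connected \(\tilde U\subset U\) with \(p\in\tilde U\). I would define \(\operatorname{ResMod}\) on generators by \(\bar s_U\mapsto\overline{(s|_{\tilde U})}_{\tilde U}\) and extend it by \(Y_{W^0_U}(v,x)\bar s_U\mapsto Y_{W^0_{\tilde U}}(v|_{\tilde U},x)\overline{s|_{\tilde U}}\). To show this is well defined, I would use the natural map \(M_{p,U}\to M_{p,\tilde U}\). It is induced by the inclusion of invariant subalgebras \(T(E^{\B,\F}_p)^{\Hol_p(\widehat\nabla^T,U)}\subset T(E^{\B,\F}_p)^{\Hol_p(\widehat\nabla^T,\tilde U)}\) together with restriction of spinor sections. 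Restriction intertwines \(\Phi_U\) and \(\Phi_{\tilde U}\) because \(\A\) is a local differential operator. Tensoring with \(T((\widehat E_p)^{\B,\F,-}_\Z)\) gives a map \(W_{p,U}\to W_{p,\tilde U}\). This map intertwines the naive operators and \(\exp(\Delta(x))\) along the MOSVA homomorphism of Theorem \ref{sheaf of bosonic-fermionic mosva}(3), so it carries \(W^0_{p,U}\) into \(W^0_{p,\tilde U}\) as a module map. Independence of \(p\) follows from the transport identification in Proposition \ref{prop-bosonic module sheaf}. For arbitrary opens, I would treat each component separately, mapping \(\tilde U_\beta\subset U_{\alpha(\beta)}\) exactly as in part (3) of Theorem \ref{sheaf of bosonic-fermionic mosva}. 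The functoriality \(\operatorname{Res}\circ\operatorname{Res}=\operatorname{Res}\) is checked on generators. Together these give the presheaf \(\W^0\) over \(U\mapsto V^{\mathrm{fin}}_U\).

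The main obstacle is the module structure on \(W^{\mathrm{fin}}_U\) after sheafification. The vertex coefficients \((Y_W)_n(v)\) are local and commute with restriction, so they descend to germs and hence to the ordinary sheafification \(\W\), weight by weight. Each \(\W_k\) is a sheaf, so \(W^{\mathrm{fin}}_U\) is a direct sum of spaces of genuine sections of fixed weight. The twisted analytic axioms are then handled as in part (2) of Theorem \ref{sheaf of bosonic-fermionic mosva}, with fixed input weights and a fixed output weight. On each connected member of an open cover, the matrix coefficients are branches of generalized rational functions \(P\prod z_i^{k_i/2}z_i^{-q_i}\prod(z_i-z_j)^{-q_{ij}}\). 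The exponents and the degree of \(P\) are controlled by the weights, because \(\Delta(x)\) lowers weight in a bounded way and the \(\mathbf d\)-commutator formula holds. These functions therefore lie in a common finite-dimensional space, independent of the local piece. Since a graded dual vector sees only finitely many weights, I would choose a basis of that space. Associativity and convergence then reduce to finitely many coefficient identities, and each of these holds locally and hence globally in the sheaf. The lower bound condition holds because all weights lie in \(\Z_{\geq0}\).

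Finally, the map \(s\mapsto\bar s_U\) is injective by the averaging argument in Proposition \ref{prop-bosonic module sheaf}. It is compatible with restriction, so it defines an injective morphism of sheaves \(\S\to\W^0_0\to\W_0\). Injectivity survives sheafification because a germ vanishing in the sheafification already vanishes on some neighborhood. Taking global sections shows that \(\Gamma_M(\S)\) embeds into \(\Gamma(M,\W_0)\subset W^{\mathrm{fin}}_M\) in weight zero.
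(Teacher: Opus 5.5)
Your proposal is correct and follows essentially the same route as the paper: connected opens first, then restriction through the induced-module map \(M_{p,U}\to M_{p,\tilde U}\) using locality of \(\Phi\), then componentwise extension and sheafification, with the analytic axioms handled by the uniform finite-span argument of Theorem \ref{sheaf of bosonic-fermionic mosva}, and finally the ground embedding commuting with restriction. One small precision: the lift \(\theta_H\) on \(M_{p,U}\) must be the tensor-product parity (the parity of \(T(E_p^{\B,\F})\) times \(\theta_\S\)), which descends to the quotient precisely because \(\Phi_U\) is parity compatible; \(\theta_\S\) acting on the spinor factor alone would not satisfy \(\theta_H a=(-1)^{|a|}a\theta_H\).
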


\begin{proof}
For nonempty connected open subsets \(\tilde U\subset U\),
we already have the restriction map
\(\operatorname{ResAlg}_{\tilde U}^U:V_U\to V_{\tilde U}\).
Choose a common base point in \(\tilde U\).
Since \(\Phi\) is local, restricting two representatives of the same
induced-module element gives the same element in the induced module
over \(\tilde U\).
Thus, using the restriction map from \(\Gamma_U(\S)\) to
\(\Gamma_{\tilde U}(\S)\), we obtain the natural restriction map
\[
\operatorname{ResMod}_{\tilde U}^U:
W^0_U\to W^0_{\tilde U}.
\]
By the same naturality used above, we have
\[
\operatorname{ResMod}_{\tilde U}^U
\left(Y_{W^0_U}(v,x)w\right)
=
Y_{W^0_{\tilde U}}
\left(\operatorname{ResAlg}_{\tilde U}^U(v),x\right)
\left(\operatorname{ResMod}_{\tilde U}^U(w)\right)
\]
for all \(v\in V_U\) and \(w\in W^0_U\).
Here the left-hand side means restricting the coefficients
(which are in \(W^0_U\)) of \(Y_{W^0_U}(v,x)w\) to
\(W^0_{\tilde U}\), while leaving the variables \(x^\alpha\)
unchanged.
These restrictions are independent of the chosen base point,
compose, and preserve weights and parity.
For arbitrary opens, we restrict componentwise.
Thus we have the presheaf and can sheafify it to be a sheaf.

The vertex coefficients extend to \(\V\) and \(\W\) by compatibility
with restriction.
The weight rule gives uniform lower truncation on finite-weight
sections, and the other algebraic module identities are local.
The analytic axioms follow from
Theorem \ref{bosonic-fermionic fiberwise module}
and a similar uniform finite-span argument as in
Theorem \ref{sheaf of bosonic-fermionic mosva},
applied to the generalized rational kernels of the corrected
module fields.
Hence \(W_U^{\mathrm{fin}}\) is a canonically twisted
\(V_U^{\mathrm{fin}}\)-module. Finally, the ground embeddings commute with restriction. 
Hence the spinor sheaf embeds in \(\W_0\), and
\(\Gamma_M(\S)\) embeds in \(W_M^{\mathrm{fin}}\).
\end{proof}

\section{Dirac generating operators and vertex operators}
\label{Section-Dirac generating operators and vertex operators}

\subsection{\texorpdfstring{The canonical Dirac generating operator \(\d_\S\), its adjoint \(\d_\S^*\), and the generalized Hodge Laplacian operator \(\mathcal{H}_\S\)}{The canonical Dirac generating operator and its adjoint}}  \label{Section-example}

We first recall the canonical Dirac generating operator and compute its formal adjoint. 
\begin{definition}[Dirac generating operator]
Let \(E\) be a Courant algebroid, and let
\(S\to M\) be a \(\mathbb Z_2\)-graded Clifford module bundle over
\(\CL(E)\). 
An odd differential operator of order at most one
\(
\d:\Gamma(S)\to \Gamma(S)
\)
is called a \textit{Dirac generating operator} for \(E\) if, for all
\(f\in C^\infty(M)\) and \(e,e_1,e_2\in\Gamma(E)\), the following
conditions hold. Here brackets between operators are supercommutators; functions act by even multiplication,
while \(\d\) and Clifford multiplication by vectors are odd.
\begin{enumerate}
    \item \(
    [[\d,f],c(e)] = \L_{\rho(e)}(f)
    \);
    \item
    \(
    [[\d,c(e_1)],c(e_2)] = c(e_1\circ e_2);
    \)
    \item
    \(
    \d^2=m_F\ \text{ for some }F\in C^\infty(M),
    \)
\end{enumerate}
where \(m_F\) denotes scalar multiplication by \(F\).
\end{definition}

Note that given $(E, h)$ and $\d$ one can reconstruct the full Courant algebroid structure from (1) and (2). This is why the operator $\d$ is called \textit{generating}.

Retain the spinorial hypotheses and fixed normalized bundle from Section \ref{subsection-Modules-for-associative algebra}. Let $\nabla$ be a real generalized connection on $E_{\mathbb R}$ with torsion $T$, and let \(\nabla^\S\) be its induced weighted connection, complexified after real normalization. We define the \textit{Dirac operator} \(\slashed D^{\S}: \Gamma(\S)\to \Gamma(\S)\) associated to \(\nabla^\S\) by
\begin{equation}\label{Dirac operator}
    \slashed D^{\S}
=
\frac12
\sum_a
\epsilon_a c(e_a)\nabla^{\S}_{e_a}.
\end{equation}
Equivalently,
\begin{equation}
    \slashed D^{\S}
=
\frac12
\left(
\sum_i c(e_i^+)\nabla^{\S}_{e_i^+}
-
\sum_\alpha c(e_\alpha^-)\nabla^{\S}_{e_\alpha^-}
\right).
\end{equation}

\begin{lem}[cf. \cite{CortesDavid2021}, Theorem 52, and \cite{GrutzmannMichelXu2021}, Theorem 5.6 and Proposition 5.12]\label{lem-canonical-dgo}
Under the stated real spinorial hypotheses, let $\nabla$, its torsion $T$, \(\S\), \(\nabla^\S\) and $\slashed{D}^\S$ be as above. Then
\begin{equation}
\d_\S=\slashed{D}^\S+\frac{1}{4} c(T):\ \Gamma(\S) \rightarrow \Gamma(\S)
\end{equation}
is a Dirac generating operator, independent of $\nabla$.
\end{lem}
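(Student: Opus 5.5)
We will prove the lemma in two steps. First we show that \(\d_\S\) does not depend on the generalized connection \(\nabla\). Then, since \(\nabla\) is free, we verify the three axioms of a Dirac generating operator using whichever \(\nabla\) is most convenient locally.

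\emph{Independence.} Let \(\nabla'=\nabla+\chi\), where \(\chi\in\Gamma(E^*\otimes\mathfrak{so}(E,h))\). We identify \(\chi_e\) with the two-form \(\chi_e(u,v)=h(\chi_eu,v)\).
\begin{enumerate}
\item On \(S_{\mathbb R}\), the normalized connection changes by the spin lift \(\tfrac14\sum_{b,c}\epsilon_b\epsilon_c\chi_e(e_b,e_c)c(e_b)c(e_c)\). This lift is traceless, so it is compatible with the determinant-density normalization of \cite[Lemma 49]{CortesDavid2021}, and no scalar term appears.
\item On \(L\), the connection \eqref{def-nabla-L} changes by \(-\tfrac12\tr(\chi\cdot)\), where \(\tr(\chi v)=\sum_a\epsilon_a h(\chi_{e_a}v,e_a)\).
\item From the defining formula, the torsion changes by \(T'-T=\chi_{e_1}e_2-\chi_{e_2}e_1+h(\chi_{\cdot}e_1,e_2)\). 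As a three-form, this is the total antisymmetrization of \(\chi\).
\end{enumerate}
We then substitute into \(\slashed D^{\S}=\tfrac12\sum_a\epsilon_ac(e_a)\nabla^\S_{e_a}\) and expand the cubic Clifford term \(\sum\chi_{abc}c(e_a)c(e_b)c(e_c)\). This expansion splits into a totally antisymmetric part and a contraction part, the latter being a Clifford vector built from \(\tr\chi\). We expect the antisymmetric part to cancel against \(\tfrac14c(T'-T)\), with the factor \(\tfrac14\) matching the Clifford convention \(c(e)^2=h(e,e)\). The contraction vector should cancel against the half-density change from step 2. The precise coefficient and sign bookkeeping in this cancellation is the main obstacle. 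We will check it in an adapted orthonormal frame, following \cite[Theorem 52]{CortesDavid2021}.

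\emph{Axiom (1).} Clifford compatibility of \(\nabla^\S\) gives \([\d_\S,f]=\tfrac12\sum_a\epsilon_a(\rho(e_a)f)c(e_a)=\tfrac12c(Df)\), since the torsion term is \(C^\infty\)-linear. Anticommuting with \(c(e)\) yields \(h(Df,e)=\L_{\rho(e)}f\).

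\emph{Axiom (2).} We compute \([\d_\S,c(e_1)]\) using Clifford compatibility. Taking the supercommutator with \(c(e_2)\) then produces
\[
\nabla_{e_1}e_2-\nabla_{e_2}e_1+(\nabla e_1)^*e_2\ \ (\text{from }\slashed D^\S)\qquad\text{and}\qquad \iota_{e_2}\iota_{e_1}T\ \ (\text{from }\tfrac14c(T)).
\]
By the torsion formula these combine to \(c(e_1\circ e_2)\). Keeping the signs straight here requires care, but the identity is essentially the definition of \(T\).

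\emph{Axiom (3).} We check that \(\d_\S^2\) is a function. It is even, and by axioms (1) and (2) together with the Jacobi identity for supercommutators, \([\d_\S^2,c(e)]=[\d_\S,[\d_\S,c(e)]]\). To show this vanishes, we pass to a suitable local frame, or equivalently use the Courant axioms (C1) and (C4)–(C5) as in \cite[Proposition 5.12]{GrutzmannMichelXu2021}. Once \([\d_\S^2,c(e)]=0\) is established, \(\d_\S^2\) commutes with the whole Clifford algebra \(\CL(E)\cong\operatorname{End}S\), so it is an operator of order at most two with scalar symbol.
\begin{enumerate}
\item Its principal symbol commutes with all \(c(e)\), so it is scalar. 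Computing \([[\d_\S^2,f],g]\) through axiom (1) shows this symbol is \(h(Df,Dg)=\tfrac12\langle df,\rho\rho^*dg\rangle\). This vanishes because \(\rho\rho^*=0\) for Courant algebroids.
\item The first-order part is then \(C^\infty\)-linear and Clifford-central, so it is a scalar function \(m_F\).
\end{enumerate}

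The independence step remains the genuinely delicate one, because it is there that the half-density weighting of \(\S\) is essential.
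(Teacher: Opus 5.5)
Your direct treatment of independence and of axioms (1)--(2) differs from the paper, which checks nothing by hand. It cites \cite[Theorem~5.6, Proposition~5.12]{GrutzmannMichelXu2021} for the whole lemma, after matching their complex determinant-root normalization and Clifford pairing $2h$ with the real density normalization used here. The genuine gap in your proposal is axiom (3), exactly the part the paper outsources. You never prove $[\d_\S,[\d_\S,c(e)]]=0$, and the suggested route through the Courant axioms cannot work, because (3) is not a formal consequence of (1), (2) and (C1)--(C5). For any $w\in\Gamma(E)$, the operator $\d_\S+c(w)$ still satisfies (1) and (2), since $[c(w),f]=0$ and $[[c(w),c(e_1)],c(e_2)]=[2h(w,e_1),c(e_2)]=0$. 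Yet
\[
(\d_\S+c(w))^2-\d_\S^2=[\d_\S,c(w)]+h(w,w)=\nabla^\S_w+(\text{order zero})
\]
by the same calculation as in your axiom (2) step. This difference has nonzero first-order part whenever $\rho(w)\neq0$, so the two squares cannot both be functions. Concretely, the derived-bracket Jacobi identity turns (C1) and (C2) into $[[[\d_\S^2,c(e_1)],c(e_2)],c(e_3)]=0$ and $[[[\d_\S^2,c(e_1)],c(e_2)],f]=0$, while (C3)--(C5) hold automatically; none of this forces $\d_\S^2$ to be Clifford-central. In fact (1) and (2) determine such an operator only up to adding some $c(w)$. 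So (3) is a genuine extra property of the particular operator $\slashed D^\S+\frac14c(T)$, whose vector part is fixed by the half-density weighting. It must be proved by an actual local computation of $\d_\S^2$ from the structure identities of the transitive Courant algebroid, or taken from \cite{GrutzmannMichelXu2021} with the normalization identification the paper makes.

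Two further repairs are needed. First, in the last step of your argument for (3), ``the first-order part is $C^\infty$-linear'' is unjustified. Vanishing of $[[\d_\S^2,f],g]$ only makes $[\d_\S^2,f]$ an endomorphism, which, given Clifford-centrality, equals $V(f)$ for a vector field $V$. To get $V=0$, use $[\d_\S^2,\d_\S]=0$ and super Jacobi: $\frac12c(D(Vf))=[\d_\S,V(f)]=[\d_\S^2,[\d_\S,f]]=\frac12[\d_\S^2,c(Df)]=0$. Hence $d(Vf)=0$ by transitivity, and applying this to $f^2$ forces $V=0$.

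Second, in the independence step, with $c(e)^2=h(e,e)$ the spin lift of $\chi_e$ is $-\frac14\sum_{b,c}\epsilon_b\epsilon_c\chi_e(e_b,e_c)c(e_b)c(e_c)$, as for $K_e$ in the appendix. Your $+\frac14$ lifts $-\chi_e$, and with it neither cancellation occurs. With the correct sign the bookkeeping closes in any signature:
\begin{enumerate}
\item the contraction part of $\frac12\sum_a\epsilon_ac(e_a)K_{e_a}$ is $\frac14\sum_c\epsilon_c\tr(\chi e_c)c(e_c)$, which cancels the half-density contribution $-\frac14\sum_c\epsilon_c\tr(\chi e_c)c(e_c)$;
\item the totally antisymmetric part is $-\frac14c(T'-T)$ for $c(T)=\frac16\sum_{a,b,c}\epsilon_a\epsilon_b\epsilon_cT_{abc}c(e_a)c(e_b)c(e_c)$, the same normalization that makes your axiom (2) calculation close.
\end{enumerate}
Since this computation is signature-free, do it directly rather than following \cite[Theorem~52]{CortesDavid2021}, which is stated for neutral signature. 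Independence is the routine part of the lemma; (3) is the delicate one.
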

\begin{proof}
Theorem~52 of \cite{CortesDavid2021} is stated for neutral signature, whereas
our hypotheses allow non-neutral signature. We therefore use
\cite[Theorem~5.6 and Proposition~5.12]{GrutzmannMichelXu2021}, which do not require
this restriction. Under our spinorial hypotheses, their complex
determinant-root normalization is locally compatible with the
complexification of our real density normalization, including the
induced connections. With their Clifford pairing corresponding
to $2h$, their formula~(5.14) gives precisely the operator above.
Since this operator is globally defined, the conclusion follows.
\end{proof}
We call \(\d_\S\) the \textit{canonical Dirac generating operator} relative to the fixed spinor data. It agrees with the operator of \cite{CortesDavid2021} in that reference's setting. In our setting, since we have chosen \(\nabla=\nabla^E\) to be a generalized Levi--Civita connection, in particular \(T=0\), so \(\d_\S=\slashed{D}^\S\).

The formal adjoint uses the following pairing determined by the generalized metric.

\begin{definition}
A positive Hermitian metric \((\cdot,\cdot)\) on a complexified Clifford module
bundle is \textit{\(\mathcal G\)-admissible} if
\[
(c(e)\psi,\phi)=(\psi,c(\mathcal G e)\phi)
\quad (e\in\Gamma(E_{\mathbb R})).
\]
Equivalently, for a complex section \(z\in\Gamma(E)\),
\[
c(z)^*=c(\mathcal G\overline z),
\]
where conjugation is with respect to \(E_{\mathbb R}\).

The same terminology applies to a positive density-valued Hermitian pairing.
%All adapted frames used in the adjoint calculations below are real.
\end{definition}

\begin{prop}
The canonical weighted complex spinor bundle \(\S\) admits a natural \(\mathcal G\)-admissible density-valued Hermitian pairing \((\cdot, \cdot)_\S\).
\end{prop}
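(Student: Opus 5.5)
The plan is to build the pairing fiberwise on the real normalized spinor bundle, using the splitting $E=E_+\oplus E_-$ given by $\mathcal G$. Then we tensor with the density factors so that the result takes values in densities on $M$.

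First, fix a point and an $h$-orthonormal frame $\{e_i^+,e_\alpha^-\}$ adapted to the splitting. Consider the real Clifford group elements generated by unit vectors of $E_+$ and of $E_-$, or equivalently the compact group generated by $c(e)$ for $G$-unit vectors $e$ lying in $E_+\cup E_-$. Each $c(e_i^+)$ squares to $1$ and each $c(e_\alpha^-)$ squares to $-1$, and they anticommute, so they generate a finite group $\Gamma$ of order $2^{\rk E+1}$. Averaging an arbitrary positive inner product on $S_{0,\mathbb R,p}$ over $\Gamma$ gives a $\Gamma$-invariant one. For it, $c(e_i^+)$ is orthogonal and involutive, hence symmetric, and $c(e_\alpha^-)$ is orthogonal with square $-1$, hence skew. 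This is exactly $c(e)^t=c(\mathcal G e)$ for basis vectors, and therefore for all $e$ by linearity.

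Next, uniqueness up to a positive scalar follows from absolute irreducibility. If $b_1,b_2$ are two admissible inner products, then $b_1^{-1}b_2$ commutes with all $c(e)$, so it lies in $\operatorname{End}_{\CL}(S_{0,\mathbb R})=\mathbb R\one$, and this scalar is positive. The admissible metrics therefore form an $\mathbb R_{>0}$-torsor, i.e. a positive ray in $\operatorname{Sym}^2S_{0,\mathbb R}^*$. By the local uniqueness, local choices glue with a partition of unity, since convex combinations of admissible metrics are admissible. To make the construction canonical rather than merely existent, I would remove the scaling freedom by tensoring with the density line. An admissible metric $b$ on $S_{0,\mathbb R}$ induces $\det b$ on $\det S_{0,\mathbb R}$. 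The combination $b\otimes|\det S_{0,\mathbb R}^*|^{2/\rk}$ is invariant under $b\mapsto\lambda b$, because the determinant rescales by $\lambda^{\rk}$. So it gives a canonical real metric on $S_{\mathbb R}=S_{0,\mathbb R}\otimes|\det S_{0,\mathbb R}^*|^{1/\rk}$, independent of the choice of $b$. We complexify it sesquilinearly to obtain a positive Hermitian metric on $S$ with $c(z)^*=c(\mathcal G\bar z)$. Finally, on $L=|\det T^*M|^{1/2}$ the pairing $\mu\bar\nu$ is density-valued and positive. Tensoring gives $(\cdot,\cdot)_\S$, and admissibility is unaffected because Clifford multiplication acts only on the $S$ factor.

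The main obstacle I expect is the canonicity step: checking carefully that the determinant-density twist exactly cancels the scalar ambiguity. In particular, I would verify that the weight $1/\rk$ chosen in the normalization is the one that makes the induced metric on $S_{\mathbb R}$ scale-invariant, and I would track how $\det b$ pairs with elements of $|\det S_{0,\mathbb R}^*|^{1/\rk}$ so that the result is scalar-valued rather than density-valued on $S_0$. A secondary point is to confirm that the averaging argument respects the mixed signature, namely that the relevant group is compact. It is, since $\Gamma$ is finite, so no neutral-signature assumption is needed.
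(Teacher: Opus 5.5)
Your proposal is correct and follows essentially the paper's route: local existence by averaging over the finite group generated by Clifford multiplication by an adapted orthonormal frame, then uniqueness up to a positive function by Schur's lemma and absolute irreducibility. After that, the factor $|\det_{\mathbb R}S_{0,\mathbb R}^*|^{1/\rk}$ cancels that function (the paper checks that the induced squared norm on this line scales by $f^{-1}$ when the metric scales by $f$), and one finishes with complexification and the half-density pairing $\lambda\bar\nu$. Your partition-of-unity gluing is unnecessary, since the normalized local metrics already coincide, and the scale-invariant combination you want is more precisely $b\otimes(\det b)^{-1/\rk}$, a section of $\operatorname{Sym}^2S_{0,\mathbb R}^*\otimes|\det S_{0,\mathbb R}^*|^{-2/\rk}=\operatorname{Sym}^2S_{\mathbb R}^*$.
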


\begin{proof}
We first prove local existence on the real bundle \(S_{0,\mathbb R}\),
then complexify. On a small open set choose a real adapted orthonormal
frame \(e_1,\ldots,e_{r_++r_-}\). The operators
\[
\{\,\pm c(e_{i_1})\cdots c(e_{i_k}):i_1<\cdots<i_k\,\}
\]
form a finite group: distinct generators anticommute and
\(c(e_a)^2=\epsilon_a\one\). Average any smooth positive real inner
product over this group. Each \(c(e_a)\) becomes orthogonal, and its
adjoint is its inverse \(\epsilon_a c(e_a)\). Complexifying the averaged
inner product gives a smooth positive admissible Hermitian metric \(H_U\)
on \(S_0\). 
%This works in signature \((r_+,r_-)\) without a neutral signature assumption.

Now suppose \(H_U\) and \(H_U'\) are two such local admissible Hermitian
metrics on \(S_0|_U\). There is a unique positive \(H_U\)-self-adjoint bundle
endomorphism \(A\in\Gamma(\operatorname{End}S_0|_U)\) such that
\[
H_U'(\psi,\phi)=H_U(A\psi,\phi).
\]
Since both \(H_U\) and \(H_U'\) satisfy the same Clifford-adjoint relation,
we have, for all \(e\in\Gamma(E_{\mathbb R}|_U)\),
\[
H_U(Ac(e)\psi,\phi)
=
H_U'(c(e)\psi,\phi)
=
H_U'(\psi,c(\mathcal G e)\phi).
\]
Using the definition of \(A\) and the admissibility of \(H_U\), the last term
equals
\[
H_U(A\psi,c(\mathcal G e)\phi)
=
H_U(c(e)A\psi,\phi).
\]
Hence
\(
Ac(e)=c(e)A
\)
for every \(e\in\Gamma(E_{\mathbb R}|_U)\). Fiberwise irreducibility and Schur's lemma
therefore imply
\(
A=f \one_{S_0}
\)
for a smooth positive function \(f\in C^\infty(U,\mathbb R_{>0})\). Thus
\(
H_U'=fH_U.
\)

The induced metric on the real determinant-density factor
\(|\det_\R S_{0,\mathbb R}^*|^{1/\rk}\) has squared norm
scaled by \(f^{-1}\) when \(H_U\) is scaled by \(f\), since the dual
 determinant metric scales by \(f^{-\rk}\), and we take the \(\rk\)-th root. 
 The product metric on \(S\) hence scales by
\(f f^{-1}=1\). Thus the local metrics agree after normalization and
glue to the canonical admissible Hermitian metric \((\cdot,\cdot)_S\).
%relative to the fixed real spinor data.

Finally, the Clifford action is trivial on the half-density factor.
For \(\uppsi=\psi\otimes\lambda\) and \(\upphi=\phi\otimes\nu\),
with \(\lambda,\nu\) sections of the complexified real half-density line,
define
\[
(\uppsi,\upphi)_\S=(\psi,\phi)_S\lambda\overline\nu.
\]
This is a density-valued sesquilinear pairing, well defined under complex
tensor balancing. The admissibility identity for real sections, equivalently
\(c(z)^*=c(\mathcal G\overline z)\), passes to it.
\end{proof}

The normalized connection \(\nabla^S\) is unitary. Indeed,
\(\nabla^E\) preserves \(h\) and \(\mathcal G\), so transport by a
compatible real spin lift pulls an admissible metric back to an admissible
metric. The two metrics differ by a positive scalar, which cancels under
the determinant-density normalization just justified. Thus normalized
parallel transport is an isometry. We differentiate to see
\[
\L_{\rho(e)}(\psi,\phi)_S
=(\nabla^S_e\psi,\phi)_S+(\psi,\nabla^S_e\phi)_S
\]
for real sections \(e\). On the weighted bundle, \eqref{def-nabla-L} instead gives
\[
(\nabla^\S_e\uppsi,\upphi)_\S+
(\uppsi,\nabla^\S_e\upphi)_\S
=\L_{\rho(e)}(\uppsi,\upphi)_\S
-\div^{\nabla^E}(e)(\uppsi,\upphi)_\S.
\]
The \(L^2\)-pairing is
\begin{equation}\label{L2 adjoint}
\langle\uppsi,\upphi\rangle_{L^2}
=\int_M(\psi,\phi)_S\lambda\overline\nu.
\end{equation}
Throughout, \(*\) on differential operators means the \emph{formal}
\(L^2\)-adjoint, determined on compactly supported smooth sections
(in the interior if there is a boundary). 
\begin{thm}\label{thm-adjoint using LC connection}
    The formal \(L^2\)-adjoint of the Dirac generating operator \(\d_\S=\slashed{D}^\S\) built from a generalized Levi--Civita connection \(\nabla^E\), denoted by \(\d_\S^*\), under the \(L^2\)-pairing \eqref{L2 adjoint}, is given by
    \begin{equation}
        \begin{aligned}
           \d_\S^*=&-\frac{1}{2}\sum_a c(e_a) \nabla^\S_{e_a}\\
        =&
-\frac12
\left(
\sum_i c(e_i^+)\nabla^{\S}_{e_i^+}
+
\sum_\alpha c(e_\alpha^-)\nabla^{\S}_{e_\alpha^-}
\right). 
        \end{aligned}
    \end{equation}
\end{thm}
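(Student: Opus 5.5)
The plan is to compute the formal adjoint term by term in a local real adapted frame, using two facts established above: the admissibility of \((\cdot,\cdot)_\S\) and the weighted compatibility identity
\[
(\nabla^\S_e\uppsi,\upphi)_\S+(\uppsi,\nabla^\S_e\upphi)_\S=\L_{\rho(e)}(\uppsi,\upphi)_\S-\div^{\nabla^E}(e)(\uppsi,\upphi)_\S .
\]
Since both sides of the claimed formula are frame-independent local operators, it suffices to work on a small open set with compactly supported sections.

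First, fix a real \(h\)-orthonormal frame \(\{e_a\}\) adapted to \(E_+\oplus E_-\), and let \(\uppsi,\upphi\) be compactly supported. Admissibility gives \(c(e_a)^*=c(\mathcal G e_a)=\epsilon_a c(e_a)\). Hence
\[
(\epsilon_a c(e_a)\nabla^\S_{e_a}\uppsi,\upphi)_\S=(\nabla^\S_{e_a}\uppsi,c(e_a)\upphi)_\S,
\]
using \(\epsilon_a^2=1\). Applying the weighted compatibility identity to this term gives
\[
(\nabla^\S_{e_a}\uppsi,c(e_a)\upphi)_\S=\L_{\rho(e_a)}(\uppsi,c(e_a)\upphi)_\S-\div(e_a)(\uppsi,c(e_a)\upphi)_\S-(\uppsi,\nabla^\S_{e_a}(c(e_a)\upphi))_\S .
\]
Next, expand with Clifford compatibility: \(\nabla^\S_{e_a}(c(e_a)\upphi)=c(\nabla_{e_a}e_a)\upphi+c(e_a)\nabla^\S_{e_a}\upphi\). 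The last piece, summed with the factor \(\tfrac12\) and a minus sign, produces \(-\tfrac12\sum_a c(e_a)\nabla^\S_{e_a}\), which is the claimed operator. What remains is to show that the leftover terms integrate to zero:
\[
\tfrac12\sum_a\Bigl[\L_{\rho(e_a)}\mu_a-\div(e_a)\mu_a-(\uppsi,c(\nabla_{e_a}e_a)\upphi)_\S\Bigr],\qquad \mu_a:=(\uppsi,c(e_a)\upphi)_\S .
\]
Here each \(\mu_a\) is a density.

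Then rewrite these terms as a total divergence. Let \(Z\in\Gamma(E)\otimes\mathrm{Dens}\) be the density-valued section characterized by \(h(Z,v)=(\uppsi,c(v)\upphi)_\S\) for real \(v\); it is \(C^\infty\)-linear in \(v\). In the frame, \(Z=\sum_a\epsilon_a\mu_a e_a\). For a density-valued section \(Z=\sum_a z^a e_a\), the anchored Lie derivative satisfies \(\sum_a\L_{\rho(e_a)}(z^a)=\L_{\rho(Z)}\)-type divergence. The key identity I would prove is
\[
\sum_a\bigl(\L_{\rho(e_a)}\mu_a-\div^{\nabla}(e_a)\mu_a\bigr)-(\uppsi,c(\textstyle\sum_a\nabla_{e_a}e_a)\upphi)_\S\;=\;\operatorname{d}\bigl(\iota_{\rho(Z)}\bigr)\text{-exact},
\]
that is, it equals the Lie derivative divergence \(\L_{\rho(Z')}\) of a density, which integrates to zero by Stokes. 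Concretely, I would use the tensoriality relation \(\sum_a\bigl(\L_{\rho(e_a)}(\cdot)-\div(e_a)\bigr)\) applied to \(\mu_a\) together with \(\div(fe)=f\div(e)+\rho(e)f\) to reduce everything to the claim that, for any density-valued \(E\)-section \(Z\), the quantity \(\mathrm{div}_{\text{dens}}(\rho(Z))-\div^{\nabla}(Z)\) pairs correctly. The divergence correction \(-\tfrac12\div\) built into \(\nabla^L\) in \eqref{def-nabla-L} was designed precisely so that the combination \(\L_{\rho(e)}-\div^\nabla(e)\) on densities is the intrinsic divergence.

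Finally, I would handle the term \(\sum_a\epsilon_a c(\nabla_{e_a}e_a)\) (with the correct signs) through the trace definition \(\div^\nabla(v)=\sum_a\epsilon_a h(\nabla_{e_a}v,e_a)\) and metric compatibility. The point is to verify that these terms cancel against the frame dependence of \(\div(e_a)\mu_a\). Since \(\nabla\) preserves \(\mathcal G\), \(\nabla_{e_a}e_a\) stays in the same eigenbundle, so the signs \(\epsilon_a\) are consistent.

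I expect the main obstacle to be this divergence bookkeeping: showing that \(\sum_a[\L_{\rho(e_a)}\mu_a-\div(e_a)\mu_a-(\uppsi,c(\nabla_{e_a}e_a)\upphi)]\) is exactly \(\L_{\rho(\cdot)}\) of a compactly supported density, so that it integrates to zero. I would first try writing it as \(\sum_a\L_{\rho(e_a)}(\cdot)\) of the frame-free quantity \(Z\), and then invoke \(\int_M\L_X\mu=0\) for compactly supported densities. If a genuine torsion-like remainder appears, torsion-freeness (\(T=0\)) would be used to kill it.
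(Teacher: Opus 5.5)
Your manipulations up to the leftover terms are correct. They follow the paper's route, which packages the same steps as $(\nabla^\S_e)^*=-\nabla^\S_e-\div^{\nabla^E}(e)$ for real $e$ (weighted compatibility plus $\int_M\L_{\rho(e)}(\cdot)=0$), then uses $c(e_a)^*=\epsilon_a c(e_a)$ and Clifford compatibility. The gap is the step you flag yourself: you never show the leftover integrates to zero, and the mechanism you sketch points the wrong way. The derivative terms need no frame-free rewriting. After a partition of unity you may take $\uppsi$ supported in the frame domain, so each $\mu_a$ is a compactly supported density there and each $\int\L_{\rho(e_a)}\mu_a$ vanishes separately. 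Because $\div^{\nabla^E}(e_a)$ is real, the rest of your leftover is the zeroth-order term
\[
-\tfrac12\bigl(\uppsi,c(V)\upphi\bigr)_\S,\qquad V:=\sum_a\bigl(\nabla^E_{e_a}e_a+\div^{\nabla^E}(e_a)\,e_a\bigr).
\]
The claimed formula holds only if this vanishes identically, and no integration by parts can dispose of it.

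The missing idea is the pointwise identity $V=0$, which is the heart of the paper's proof. Since $\nabla^E$ preserves $h$ and $\mathcal G$, it preserves $G$. Moreover $G(e_a,e_b)=\delta_{ab}$ is constant and $\div^{\nabla^E}(v)=\sum_a\epsilon_a h(\nabla^E_{e_a}v,e_a)=\sum_a G(\nabla^E_{e_a}v,e_a)$, so
\[
G(V,e_b)=-\sum_a G(e_a,\nabla^E_{e_a}e_b)+\div^{\nabla^E}(e_b)=0\quad\text{for every } b .
\]
Only metric compatibility is used here. Torsion-freeness plays no role; it enters only through $\d_\S=\slashed D^\S$. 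Likewise, the observation that $\nabla^E_{e_a}e_a$ stays in the eigenbundle of $e_a$ is not what is needed.

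Your $Z$ is also the wrong dual for a global formulation. Taking the divergence of $\rho(Z)$ gives $\sum_a\epsilon_a\L_{\rho(e_a)}\mu_a$, whereas your leftover contains $\sum_a\L_{\rho(e_a)}\mu_a$. The correct object is the $G$-dual $\mathcal G Z=\sum_a\mu_a e_a$. With $V=0$, the leftover is exactly $\tfrac12\sum_a\L_{\rho(e_a)}\mu_a$, the canonical divergence of the density-valued vector field $\rho(\mathcal G Z)$. This recovers the classical Stokes-type argument you had in mind.
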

\begin{proof}
For real \(e\), integrate the preceding weighted pairing identity.
The Lie derivative of a compactly supported density integrates to zero,
so
\[
(\nabla^\S_e)^*=-\nabla^\S_e-\div^{\nabla^E}(e).
\]
Complex conjugation gives
\[
(\nabla^\S_z)^*=-\nabla^\S_{\overline z}
-\div^{\nabla^E}(\overline z)
\quad(z\in\Gamma(E)).
\]
In a real adapted frame, \(c(e_a)^*=\epsilon_a c(e_a)\). Clifford compatibility gives
\[
\begin{aligned}
\d_\S^*
&=\frac12\sum_a\epsilon_a(\nabla^\S_{e_a})^*c(e_a)^*\\
&=-\frac12\sum_a c(e_a)\nabla^\S_{e_a}
-\frac12c \left(\sum_a\left(
\nabla^E_{e_a}e_a+\div^{\nabla^E}(e_a)e_a\right)\right).
\end{aligned}
\]
The last vector sum vanishes. Indeed, \(\nabla^E G=0\) and
\(G(e_a,e_b)=\delta_{ab}\), while
\[
\div^{\nabla^E}(v)
=\sum_a\epsilon_a h(\nabla^E_{e_a}v,e_a)
=\sum_aG(\nabla^E_{e_a}v,e_a).
\]
Consequently, for every \(b\),
\[
\begin{aligned}
G \left(\sum_a\left(
\nabla^E_{e_a}e_a+\div^{\nabla^E}(e_a)e_a\right),e_b\right)
&=-\sum_aG(e_a,\nabla^E_{e_a}e_b)+\div^{\nabla^E}(e_b)\\
&=0.
\end{aligned}
\]
Nondegeneracy of \(G\) proves the cancellation and hence the asserted
formula for \(\d_\S^*\).
\end{proof}

\begin{remark}
    The expression uses the chosen connection, but \(\d_\S^*\) itself
depends only on \(\d_\S\) and the admissible density pairing. Thus it is
independent of the generalized connection and is canonical relative to
\(\mathcal G\) and the fixed spinor data.
\end{remark}
For the explicit dissection formulas through Theorem
\ref{thm-explicit calculation of DGO} and Appendix
\ref{appendix-explicit calculation of DGO}, assume that the isotropy metric
\(c\) is positive definite and choose a \(\mathcal G\)-adapted dissection.
Here \(g\) is the Riemannian metric induced by \(\mathcal G\), and
\(\nabla^E\) is the natural GFRT connection. Note that these qualifications apply
only to this explicit model. 

A \textit{dissection} is a vector bundle isomorphism \[
E \cong TM\oplus \mathfrak g\oplus T^*M,
\]
that transports the Courant algebroid structure on \(E\) to a standard one on \(TM\oplus \mathfrak g\oplus T^*M\). Here,  \(\mathfrak g\to M\) is a bundle of quadratic Lie algebras with fiberwise
bracket \([\,,\,]_{\mathfrak g}\) and invariant fiber metric \(c\). We use this dissection for the explicit calculation. We write a
section of \(E\) as
\(
X+r+\xi\) where \(X\in \mathfrak X(M),  r\in \Gamma(\mathfrak g),
  \xi\in \Omega^1(M)\). 
The anchor is
\(
\rho(X+r+\xi)=X,
\)
and the Courant pairing is normalized by
\[
h\left(X+r+\xi,\;Y+s+\eta\right)
=
\frac12\bigl(\xi(Y)+\eta(X)\bigr)+c(r,s).
\]

The dissection induces a metric and Lie-bracket-preserving connection
\(d^\theta\) on \(\mathfrak g\). Let
\(
F\in \Omega^2(M,\mathfrak g)
\)
be the curvature two-form datum of the dissection, satisfying
\(R^{d^\theta}(X,Y)=\operatorname{ad}_{F(X,Y)}\), and let
\(H\in\Omega^3(M)\) be its three-form datum. 
The Dorfman bracket in this
dissection is
\[
\begin{aligned}
(X+r+\xi)\circ(Y+s+\eta)
={}&
[X,Y]
+d^\theta_Xs-d^\theta_Yr+[r,s]_{\mathfrak g}+F(X,Y)\\
&+\mathcal L_X\eta-\iota_Yd\xi+\iota_Y\iota_XH\\
&-2c(\iota_XF,s)+2c(\iota_YF,r)+2c(d^\theta r,s).
\end{aligned}
\]
Here \(c(\iota_XF,s)\) denotes the one-form
\(
Z\mapsto c(F(X,Z),s).
\)
The data satisfy the usual compatibility identities (cf. \cite{ChenStienonXu2013})
\[
R^{d^\theta}(X,Y)r=[F(X,Y),r]_{\mathfrak g},
\  d^\theta F=0,\ 
dH=c(F\wedge F).
\]
These identities are inherited from the existing Courant algebroid. 
We use the normalization
\[
c(F\wedge F)(X_1,X_2,X_3,X_4)
=\frac14\sum_{\sigma\in S_4}\operatorname{sgn}(\sigma)
c(F(X_{\sigma(1)},X_{\sigma(2)}),F(X_{\sigma(3)},X_{\sigma(4)})).
\]
The factors two in the one-form terms correspond to the half pairing:
for vertical sections, \(r\circ s+s\circ r=2d\,c(r,s)\), and pairing
invariance gives
\(h(X\circ Y,s)+h(Y,X\circ s)=c(F(X,Y),s)-c(F(X,Y),s)=0\).

The chosen generalized metric is encoded, in this adapted dissection, by the Riemannian metric
\(g\) on \(M\). The adapted dissection itself depends on \(\mathcal G\). We use the splitting
\(
E=E_+\oplus E_-,
\) 
where
\[
E_+=\{X+r+gX:X\in TM,\ r\in \mathfrak g\}, \
E_-=\{X-gX:X\in TM\}.
\]
If \(k=\operatorname{rank}\mathfrak g\), then \(h\) has signature
\((n+k,n)\). Since \(c>0\), \(E_+\) is positive,
\(E_+\cap T^*M=0\), and
\(\operatorname{rank}E_+=n+k=\operatorname{rank}E-n\). These are the
admissibility conditions of \cite[Definition 4.2 and Proposition 4.5]
{GarciaFernandezRubioTipler2017StromingerModuli}, which supply the adapted
dissection. 
Let
\(
\pi_\pm:E\to E_\pm
\)
be the corresponding projections. Denote by \(\nabla^g\) the (ordinary) Levi--Civita connection on \(TM\).
We define the following (ordinary) connections with torsions:
\[
\nabla_X^\pm Y
=
\nabla_X^gY\pm \frac12 H^\sharp(X,Y),
\ 
\nabla_X^{\pm 1/3}Y
=
\nabla_X^gY\pm \frac16 H^\sharp(X,Y),
\]
where
\[
g(H^\sharp(X,Y),Z)=H(X,Y,Z).
\]

Following \cite{GarciaFernandezRubioTipler2017StromingerModuli}, choose the natural generalized Levi--Civita connection 
\[
\nabla^E\coloneqq D^B-\frac13T_{D^B},
\]
where \(D^B\) is the Gualtieri--Bismut connection and \(T_{D^B}\) is its
generalized torsion, viewed as an element of \(E\otimes\Lambda^2E^*\)
\cite[Definition 5.2]{GarciaFernandezRubioTipler2017StromingerModuli}. 
Let
\[
a_+=X+r+gX,\  c_+=Z+t+gZ,
\]
and
\[
b_-=Y-gY,\  d_-=W-gW.
\]
This connection is given by
the following four formulas, which are formula (5.14) in \cite{GarciaFernandezRubioTipler2017StromingerModuli}, written in the
above conventions. The complete dictionary is
\[
H_{\mathrm{GFRT}}=H,\  F_{\mathrm{GFRT}}=-F,\ 
[\,,\,]_{\mathfrak g,\mathrm{GFRT}}=-[\,,\,]_{\mathfrak g}.
\]
Other data are not changed. Applying this dictionary to its (5.14) gives:
\[
{ 
\begin{aligned}
\nabla^E_{a_+}c_+
={}&
2\pi_+\left(
\nabla_X^{1/3}Z
-\frac23g^{-1}c(\iota_XF,t)
-\frac13g^{-1}c(\iota_ZF,r)
\right)\\
&+d^\theta_Xt
+\frac23F(X,Z)
+\frac13[r,t]_{\mathfrak g}.
\end{aligned}
}
\]
\[
{ 
\nabla^E_{b_-}c_+
=
2\pi_+\left(
\nabla_Y^+Z-g^{-1}c(\iota_YF,t)
\right)
+d^\theta_Yt
+F(Y,Z).
}
\]
\[
{ 
\nabla^E_{a_+}b_-
=
2\pi_-\left(
\nabla_X^-Y-g^{-1}c(\iota_YF,r)
\right).
}
\]
\[
{ 
\nabla^E_{b_-}d_-
=
2\pi_-\left(
\nabla_Y^{-1/3}W
\right).
}
\]
This is the natural torsion-free generalized connection associated to the
admissible generalized metric \(E_+\), in the sense of Section 5 of 
\cite{GarciaFernandezRubioTipler2017StromingerModuli}.

Next we describe the fixed normalized spinor bundle in the dissection,
with the real density convention of \cite[Section 9]{CortesDavid2021}.
The Clifford action by contraction and exterior multiplication identifies
\(\CL(TM\oplus T^*M)\) with
\(\operatorname{End}(\Lambda^\bullet T^*M)\).
Define
\[
S_{\mathfrak g,\mathbb R}
=\operatorname{Hom}_{\CL(TM\oplus T^*M)}
\left(\Lambda^\bullet T^*M,S_{0,\mathbb R}\right),
\]
where \(\operatorname{Hom}\) denotes fiberwise Clifford module
homomorphisms. The induced action of \(r\in\mathfrak g\) is
\[
(r\cdot\varphi)(\omega)
=(-1)^k\,r\cdot\varphi(\omega),
\quad \omega\in\Lambda^k T^*M,
\]
where the action on the right is Clifford multiplication on
\(S_{0,\mathbb R}\). Under the fixed spinor hypotheses, this makes
\(S_{\mathfrak g,\mathbb R}\) a bundle of absolutely irreducible real
\(\CL(\mathfrak g,c)\)-modules. Put \(\rk_{\mathfrak g}
=\operatorname{rank}_{\mathbb R}S_{\mathfrak g,\mathbb R}\) and define
\[
\S_{\mathfrak g}=
\left(S_{\mathfrak g,\mathbb R}\otimes
\left|\det_{\mathbb R}S_{\mathfrak g,\mathbb R}^*\right|^{1/\rk_{\mathfrak g}}\right)
\otimes\mathbb C.
\]
The determinant-density of the exterior factor is
\(|\det(\Lambda^\bullet T^*M)^*|^{1/2^n}=|\det T^*M|^{-1/2}\),
which cancels the base half-density. Also \(\operatorname{Ann}\rho(E)=0\) by transitivity. Hence
\[
\S=\Lambda^\bullet T^*M\,\widehat\otimes\,\S_{\mathfrak g},
\]
with the exterior factor complexified. The complex grading on \(\S_{\mathfrak g}\) is the one induced by \(P\theta_\S\), where \(P\) is form parity, and \(\hat{\otimes}\) denotes the graded tensor product. The connection \(d^\theta\) induces on \(\S_{\mathfrak g}\) its real trace-normalized compatible connection followed by complexification.
The Clifford action is
\[
c(X+r+\xi)(\omega\otimes s)
=
\iota_X\omega\otimes s
+
\xi\wedge\omega\otimes s
+
(-1)^{|\omega|}\omega\otimes r\cdot s.
\]
Let \(D^0_e=(\nabla^g\oplus d^\theta\oplus(\nabla^g)^*)_{\rho(e)}\)
be the product generalized connection and put
\(A=\nabla^E-D^0\). For an \(h\)-orthonormal frame \(\{u_B\}\)
with \(h\)-dual \(\{\widetilde u_B\}\), set
\[
a_{e;BC}=h(A_eu_B,u_C),\ 
K_e=-\frac14\sum_{B,C}a_{e;BC}c(\widetilde u_B)c(\widetilde u_C).
\]
The sign follows from our Clifford relation:
\[
[c(u)c(v),c(w)]=2h(v,w)c(u)-2h(u,w)c(v),\ 
[K_e,c(w)]=c(A_ew).
\]
The induced weighted connection is
\[
\nabla^\S_e=(D^0)^\S_e+K_e+\frac12h(v_A,e)\one,
\  v_A=\sum_B A_{u_B}\widetilde u_B.
\]
Indeed, \(\operatorname{div}^{\nabla^E}(e)-\operatorname{div}^{D^0}(e)
=-h(v_A,e)\), which gives the scalar half-density correction by
\eqref{def-nabla-L}. In the form model \((D^0)^\S_e\) is exactly the
product of the Levi--Civita connection on \(\Lambda T^*M\) and the
normalized connection on \(\S_{\mathfrak g}\), along \(\rho(e)\).
The difference table in Appendix \ref{appendix-explicit calculation of DGO}
gives \(v_A=0\). Thus only \(K_e\) is added to that product connection. We obtain:
\begin{thm}
    \label{thm-explicit calculation of DGO}
    In the positive-isotropy, \(\mathcal G\)-adapted dissection above,
write a local section of \(\S\) as \(\omega \otimes s\), where \(\omega\) is homogeneous in form degree and \(s\in\Gamma(\S_{\mathfrak g})\). Denote by 
\[
C:=\frac16C_{abc}r_ar_br_c\in \CL(\mathfrak g)
\]
the Cartan three-form, viewed as a Clifford element. 
Define also
\[
{ 
\overline F(\omega\otimes s)
:=
\frac12F_{ij}^a
e^i\wedge e^j\wedge\omega\otimes r_a\cdot s,
}
\]
and specify the contraction order by
\[
\overline F^\sharp(\omega\otimes s)
:=\frac12F_{ij}^a\iota_{e_i}\iota_{e_j}\omega\otimes r_a\cdot s,
\ 
\iota_H:=\frac16H_{ijk}\iota_{e_i}\iota_{e_j}\iota_{e_k}.
\]
Here \(\{e_i\}\) and \(\{r_a\}\) are real orthonormal frames for
\(g\) and \(c\), and
\(H_{ijk}=H(e_i,e_j,e_k)\), \(F_{ij}^a=c(F(e_i,e_j),r_a)\),
\(C_{abc}=c([r_a,r_b]_{\mathfrak g},r_c)\).
Products of contractions are composed in their written order, so
\((H\wedge)^*=-\iota_H\) and
\(\overline F^*=-\overline F^\sharp\). The following formulas extend
by linearity to all local sections: 
\begin{equation} \label{eq-DGO expression}
    \begin{aligned}
\d_\S(\omega\otimes s)
= &
d\omega\otimes s
+
\sum_i e^i\wedge\omega\otimes\nabla^{\S_{\mathfrak g}}_{e_i}s-H\wedge\omega\otimes s\\
&+\frac14(-1)^{|\omega|+1}\omega\otimes C\cdot s +(-1)^{|\omega|+1}\overline F(\omega\otimes s),
\end{aligned}
\end{equation}
and
\begin{equation}\label{eq-DGO adjoint expression}
    \begin{aligned}
\d_\S^*(\omega\otimes s)
= &
-\sum_i
\iota_{e_i}
\left(
\nabla_{e_i}^{\Lambda T^*\otimes \S_{\mathfrak g}}
(\omega\otimes s)
\right) +\iota_H\omega\otimes s\\
&+\frac14(-1)^{|\omega|}\omega\otimes C\cdot s +(-1)^{|\omega|}\overline F^\sharp(\omega\otimes s).
\end{aligned}
\end{equation}
\end{thm}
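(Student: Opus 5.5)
Since \(\nabla^E\) is torsion-free, Lemma \ref{lem-canonical-dgo} gives \(\d_\S=\slashed D^\S=\tfrac12\sum_a\epsilon_a c(e_a)\nabla^\S_{e_a}\). Theorem \ref{thm-adjoint using LC connection} gives \(\d_\S^*=-\tfrac12\sum_a c(e_a)\nabla^\S_{e_a}\). The plan is to substitute the decomposition \(\nabla^\S_e=(D^0)^\S_e+K_e\) into both expressions and evaluate them in the adapted frame. Here the half-density correction \(\tfrac12h(v_A,\cdot)\) is dropped because \(v_A=0\) by the difference table in Appendix \ref{appendix-explicit calculation of DGO}. The adapted \(G\)-orthonormal frame is
\[
e_i^+=\tfrac{1}{\sqrt2}(e_i+ge_i),\quad r_a\in E_+,\qquad e_i^-=\tfrac{1}{\sqrt2}(e_i-ge_i)\in E_-.
\]
The factor \(\sqrt2\) comes from the half pairing \(h(X+gX,X+gX)=g(X,X)\).

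\textbf{Step 1 (product part).} Compute \(\tfrac12\sum\epsilon_a c(e_a)(D^0)^\S_{e_a}\). Since \(\rho(e_i^\pm)=e_i/\sqrt2\) and \(\rho(r_a)=0\), the \(\pm\) terms combine into
\[
\tfrac12\bigl(c(e_i^+)-c(e_i^-)\bigr)\tfrac{1}{\sqrt2}\nabla_{e_i}=\tfrac12\,c(ge_i)\,\nabla_{e_i}.
\]
Here \(c(ge_i)=e^i\wedge\) and \(c(e_i)=\iota_{e_i}\), and one must check that the normalization of the Clifford action is consistent with \(c(v)^2=h(v,v)\). This produces \(\sum_i e^i\wedge\nabla^{LC\otimes\S_{\mathfrak g}}_{e_i}\). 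That equals \(d\omega\otimes s+\sum_i e^i\wedge\omega\otimes\nabla^{\S_{\mathfrak g}}_{e_i}s\), because the Levi-Civita connection is torsion-free. The analogous computation for \(\d_\S^*\) uses \(c(e_i^+)+c(e_i^-)\propto c(e_i)=\iota_{e_i}\). It gives the term \(-\sum_i\iota_{e_i}\nabla_{e_i}\), so the exterior-derivative structure matches the form-model Hodge data.

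\textbf{Step 2 (correction part).} Compute \(\tfrac12\sum\epsilon_a c(e_a)K_{e_a}\), with \(K_e=-\tfrac14\sum a_{e;BC}c(\widetilde u_B)c(\widetilde u_C)\) and \(A=\nabla^E-D^0\) read off from the four GFRT formulas \((5.14)\) above. Writing \(\nabla^{\pm},\nabla^{\pm1/3}\) as \(\nabla^g\) plus \(H^\sharp\)-terms, the components of \(A\) fall into four types:
\begin{itemize}
\item[(i)] a totally skew \(H\)-part with coefficients \(\pm\tfrac12,\pm\tfrac16\);
\item[(ii)] \(F\)-parts mixing \(TM\) with \(\mathfrak g\), with coefficients \(1,\tfrac23,\tfrac13\);
\item[(iii)] the \(\tfrac13[r,t]_{\mathfrak g}\) part;
\item[(iv)] zero in the remaining blocks.
\end{itemize}
The resulting sum is a cubic Clifford element \(\sum\epsilon_a c(e_a)c(\cdot)c(\cdot)\). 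The contraction identity \(\sum_a\epsilon_a c(e_a)c(u)c(e_a)\) kills non-skew pieces, which leaves a fully antisymmetrized three-form Clifford element. Each type is then collected separately:
\begin{itemize}
\item The \(H\)-terms from the \(++,+-,-+,--\) blocks combine into \(c\) of \(H\) in the form model, whose \(\Lambda^3T^*\)-component acts as \(-H\wedge\). The weights \(\tfrac16\cdot\) (count) \(+\tfrac12\cdot\) (count) should sum to coefficient \(1\).
\item The \(\mathfrak g^{\otimes3}\) terms give \(\tfrac13\cdot\tfrac16 C_{abc}\) times combinatorial multiplicity. This yields \(\tfrac14 C\), and the sign \((-1)^{|\omega|+1}\) comes from the graded tensor product rule \(c(r)(\omega\otimes s)=(-1)^{|\omega|}\omega\otimes r\cdot s\).
\item The \(F\)-terms give \(\overline F\) with the same sign.
\end{itemize}
For the adjoint, the \(\epsilon_a\) is absent. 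This flips the relative sign between \(E_+\) and \(E_-\) contributions, so \(ge_i\) is replaced by \(e_i\), that is, \(\wedge\) by \(\iota\). We obtain \(\iota_H\), \(+\tfrac14(-1)^{|\omega|}C\) and \((-1)^{|\omega|}\overline F^\sharp\). As a consistency check, compare with \(\d_\S^*\) computed as the formal adjoint of \eqref{eq-DGO expression} using \((H\wedge)^*=-\iota_H\), \(\overline F^*=-\overline F^\sharp\), the skew-adjointness of \(C\) under the admissible pairing, and \(d^*=-\sum\iota_{e_i}\nabla_{e_i}\). This second route could replace the direct adjoint computation entirely.

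\textbf{Main obstacle.} The hard part is the bookkeeping of coefficients and signs in Step 2. The unequal weights \(\tfrac23,\tfrac13,1\) on the \(F\)-terms and \(\tfrac16,\tfrac12\) on the \(H\)-terms must be shown to add up to exactly \(\overline F\), \(H\) and \(\tfrac14 C\), using the GFRT sign dictionary and the half-pairing normalization. I would first tabulate \(a_{e;BC}\) in the frame \(\{e_i^\pm,r_a\}\) (as in the Appendix), then expand only the totally skew part. The non-skew parts should be shown to cancel against the trace \(v_A=0\).
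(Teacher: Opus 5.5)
Your route is the paper's: split \(\nabla^\S=(D^0)^\S+K\), drop the half-density term since \(v_A=0\), read the first-order part off the product connection and the zero-order part off \(\tfrac12\sum_A\epsilon_A c(e_A)K_{e_A}\), and get \(\d_\S^*\) by adjoint reversal. Step 1 fails as written, however, because your frame is not orthonormal. With the paper's half pairing, \(h(e_i\pm ge_i,e_i\pm ge_i)=\pm1\), as you compute yourself. So \(p_i=e_i+ge_i\), \(m_i=e_i-ge_i\), \(q_a=r_a\) is already the adapted \(h\)- and \(G\)-orthonormal frame. Your \(e_i^\pm=\tfrac1{\sqrt2}(e_i\pm ge_i)\) satisfy \(h(e_i^\pm,e_i^\pm)=\pm\tfrac12\), so the orthonormal-frame expressions \(\tfrac12\sum_a\epsilon_ac(e_a)\nabla^\S_{e_a}\) and \(-\tfrac12\sum_ac(e_a)\nabla^\S_{e_a}\) do not apply to them. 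Taken literally, your computation gives \(\tfrac12\sum_ie^i\wedge\nabla_{e_i}\), and \(-\tfrac12\sum_i\iota_{e_i}\nabla_{e_i}\) for the adjoint, which is half the correct answer. No Clifford normalization is free to absorb the factor: \(c(X+\xi)=\iota_X+\xi\wedge\) already satisfies \(c(v)^2=h(v,v)\) for the half pairing. Without the \(\sqrt2\) you get \(\tfrac12(P_i-M_i)=\epsilon_i\) and \(-\tfrac12(P_i+M_i)=-\iota_i\), with \(P_i=c(p_i)\), \(M_i=c(m_i)\), as in the appendix.

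Your direct adjoint rule ``replace \(\wedge\) by \(\iota\)'' is also not valid for the zero-order terms. Dropping \(\epsilon_A\) and adding the overall minus sign flips only the terms whose leading Clifford factor is \(P_i\) or \(R_a=c(q_a)\); \(K\) is unchanged. Applied to \(-H\wedge\), the rule would give \(-\iota_H\), and the Cartan term changes sign with no wedge involved. The paper obtains \(\iota_H\), \(\tfrac14(-1)^{|\omega|}C\) and \((-1)^{|\omega|}\overline F^\sharp\) by re-expanding \(\mathcal B_H,\mathcal B_F,\mathcal B_C\). Your fallback is sound and is exactly the paper's closing cross-check: take the formal adjoint of \eqref{eq-DGO expression} using admissibility, \(P_i^*=P_i\), \(M_i^*=-M_i\), \(R_a^*=R_a\), hence \(\epsilon_i^*=\iota_i\).

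Step 2 carries the content of the theorem, but you give only expectations (``should sum to \(1\)'', ``combinatorial multiplicity''). The mechanism you cite is also not the operative one. The non-totally-skew part of \(\sum_{A,B,C}a_{A;BC}c(\widetilde u_A)c(\widetilde u_B)c(\widetilde u_C)\) consists of the terms with \(A\in\{B,C\}\). These sum to \(2c(v_A)\) and vanish because \(v_A=0\); the identity for \(\sum_a\epsilon_ac(e_a)c(u)c(e_a)\) plays no role. The paper closes Step 2 by writing out \(K_{p_i},K_{m_i},K_{q_a}\) and expanding with \(P_i=\iota_i+\epsilon_i\), \(M_i=\iota_i-\epsilon_i\).

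The obstacle you single out, summing the unequal GFRT weights, can be avoided. In a frame that is \(D^0\)-parallel at the point, torsion-freeness says exactly \(a_{A;BC}+a_{B;CA}+a_{C;AB}=\beta_{ABC}:=h(u_A\circ u_B,u_C)\), and \(\beta\) is totally skew. So once the trace drops, the cubic term is \(-\tfrac1{24}\sum\beta_{ABC}c(\widetilde u_A)c(\widetilde u_B)c(\widetilde u_C)\). You can read \(\beta\) straight from the Dorfman bracket: it is \(\tfrac12H_{ijk}\) on three horizontal letters of any \(p/m\) type, \(F^a_{ij}\) on two horizontal letters and \(q_a\), and \(C_{abc}\) on three vertical letters. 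Summing over \(p/m\) types produces factors \(c(\widetilde p_i)+c(\widetilde m_i)=P_i-M_i=2\epsilon_i\). This gives \(-\tfrac1{24}\cdot\tfrac12\cdot8\,H_{ijk}\epsilon_i\epsilon_j\epsilon_k=-H\wedge\), then \(-\tfrac1{24}\cdot3\cdot4\,F^a_{ij}\epsilon_i\epsilon_jR_a=-\tfrac12F^a_{ij}\epsilon_i\epsilon_jR_a\), and \(-\tfrac1{24}C_{abc}R_aR_bR_c\). These are exactly the paper's \(\mathcal A_H,\mathcal A_F,\mathcal A_C\); the shortcut is just the connection-independence of \(\d_\S\) made explicit.
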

\begin{proof}
    The calculation is given in Appendix \ref{appendix-explicit calculation of DGO}.
\end{proof}

Next, we discuss the generalized Hodge Laplacian operator. Let $\rho_h^*:T^*M\to E$ be characterized by
$h(\rho_h^*\alpha,e)=\alpha(\rho(e))$, so that $Df=\rho_h^*df$.
The Riemannian metric induced by the generalized metric is defined by
\begin{equation}\label{eq:gh-induced-metric}
 g^{-1}(\alpha,\beta)
 :=\frac12h(\rho_h^*\alpha,\mathcal G\rho_h^*\beta).
\end{equation}
Indeed, transitivity makes $\rho_h^*$ injective, while
$G(u,v)=h(\mathcal Gu,v)$ is positive definite. 
\begin{definition}
    We define the \textit{generalized Hodge Laplacian operator} \(\mathcal{H}_\S\) by
    \begin{equation}\label{eq:gh-operator}
  \mathcal H_{\S}
  :=\d_{\S}\d_{\S}^{\,*}+\d_{\S}^{\,*}\d_{\S}.
\end{equation}
\end{definition}

\begin{thm} The following are some immediate properties of \(\mathcal{H}_\S\).
    \label{thm:generalized-hodge}
    \begin{enumerate}
        \item We have \begin{equation}
        -\frac12[[\mathcal H_{\S},f_1],f_2]
 =g^{-1}(df_1,df_2)\,\one_{\S}.
   \label{eq:gh-metric-second}
        \end{equation} Therefore $\mathcal H_{\S}$ is of Laplace type, with
\begin{equation}\label{eq:gh-symbol}
  \sigma(\mathcal H_{\S})(x,\xi)
  =g_x^{-1}(\xi,\xi)\,\one_{\S_x},
\end{equation}
where the principal-symbol convention is
$\sigma(\partial_{x^j})(x,\xi)=i\xi_j$.
\item \(\mathcal H_{\S}\) is formally self-adjoint, and
\begin{equation}\label{eq:gh-positivity}
  \langle\mathcal H_{\S}s,s\rangle_{L^2}
  =\|\d_{\S}s\|_{L^2}^2+\|\d_{\S}^{\,*}s\|_{L^2}^2
  \geq0
\end{equation}
for every compactly supported smooth section $s$.
\item There is a locally constant function $c\in C^\infty(M,\mathbb C)$
such that
\begin{equation}\label{eq:gh-central-squares}
  \d_{\S}^{\,2}=c\,\one_{\S},
  \ 
  (\d_{\S}^{\,*})^2=\overline{c}\,\one_{\S},
\end{equation}
and
\begin{equation}\label{eq:gh-commutation}
  [\mathcal H_{\S},\d_{\S}]
  =[\mathcal H_{\S},\d_{\S}^{\,*}]=0.
\end{equation}
In particular, $c$ is constant if $M$ is connected.
    \end{enumerate}
\end{thm}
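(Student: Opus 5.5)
For (1), I would first compute the single commutators $[\d_\S,f]$ and $[\d_\S^*,f]$ from the frame formulas of Lemma \ref{lem-canonical-dgo} and Theorem \ref{thm-adjoint using LC connection}. In an adapted real frame,
\[
[\d_\S,f]=\tfrac12\sum_a\epsilon_a(\rho(e_a)f)c(e_a)=\tfrac12 c(Df),
\quad
[\d_\S^*,f]=-\tfrac12\sum_a(\rho(e_a)f)c(e_a)=-\tfrac12c(\mathcal G Df).
\]
Here I use $Df=\sum_a\epsilon_a h(Df,e_a)e_a$ and $\mathcal G e_a=\epsilon_ae_a$.

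Since $f$ is even, $[\mathcal H_\S,f]=[\d_\S,f]\d_\S^*+\d_\S[\d_\S^*,f]+(\text{same with roles swapped})$. Taking a second commutator with $f_2$ then leaves only products of first-order terms, so $[[\mathcal H_\S,f_1],f_2]$ equals the Clifford anticommutator
\[
\bigl[\tfrac12c(Df_1),-\tfrac12c(\mathcal GDf_2)\bigr]+\bigl[-\tfrac12c(\mathcal GDf_1),\tfrac12c(Df_2)\bigr].
\]
Using $c(u)c(v)+c(v)c(u)=2h(u,v)$, this is $-\tfrac12h(Df_1,\mathcal GDf_2)-\tfrac12h(\mathcal GDf_1,Df_2)=-h(\rho_h^*df_1,\mathcal G\rho_h^*df_2)$. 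Comparing with \eqref{eq:gh-induced-metric} gives \eqref{eq:gh-metric-second}. The principal symbol then follows by the standard fact that, for a second-order operator $P$, one has $[[P,f_1],f_2]=-2\sigma_2$-polarization, with the stated sign convention. I would check this factor explicitly on the flat Laplacian, as a sanity check for the normalization $\frac12$ in \eqref{eq:gh-induced-metric}.

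For (2), formal self-adjointness is immediate from $(\d_\S^*)^*=\d_\S$ on compactly supported sections, since $\mathcal H_\S$ is symmetric in $\d_\S,\d_\S^*$. Nonnegativity follows by moving one factor across the pairing \eqref{L2 adjoint}:
\[
\langle\d_\S\d_\S^*s,s\rangle=\|\d_\S^*s\|^2,
\]
and similarly for the other term.

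For (3), the Dirac generating property gives $\d_\S^2=m_F$. I then need $F$ locally constant. I would take the graded commutator of $\d_\S^2=\tfrac12[\d_\S,\d_\S]$ with $c(e)$ and use the graded Jacobi identity: $[\d_\S^2,c(e)]=[\d_\S,[\d_\S,c(e)]]$. Next I commute with $f$, so that $[[\d_\S^2,c(e)],f]=[m_F,c(e)]=0$ trivially. Instead, the direct route uses $[\d_\S^2,f]=[F,f]=0$ on one side, while on the other side
\[
[\d_\S^2,f]=[\d_\S,[\d_\S,f]]=\tfrac12[\d_\S,c(Df)].
\]
Taking a further bracket with $c(e)$ and using axioms (1)–(2) yields $\tfrac12c(Df\circ e)$-type terms. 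I expect this bookkeeping to be the main obstacle, so I would first try the cleaner identity $[m_F,c(e)]=0$ combined with Jacobi:
\[
0=[[\d_\S,\d_\S],c(e)]=2[\d_\S,[\d_\S,c(e)]].
\]
Bracketing this once more with $f$ and using (1) should produce $\rho(e)F=0$ for all $e$, and transitivity then forces $dF=0$. Set $c:=F$. Taking adjoints gives $(\d_\S^*)^2=m_{\bar c}$. Finally,
\[
[\mathcal H_\S,\d_\S]=[[\d_\S,\d_\S^*],\d_\S]=-[\d_\S^*,[\d_\S,\d_\S]]\cdot\text{(sign)}=\pm2[\d_\S^*,c]=0,
\]
since $c$ is locally constant; the case of $\d_\S^*$ is symmetric.
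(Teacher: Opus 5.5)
Your parts (1) and (2) match the paper's proof. The single commutators are $\tfrac12c(Df)$ and $-\tfrac12c(\mathcal G Df)$, the double commutator collapses to two Clifford anticommutators, and nonnegativity comes from integration by parts. Your Jacobi reduction of $[\mathcal H_{\S},\d_{\S}]$ to $[\d_{\S}^*,m_c]$ is also fine, but only once $c$ is known to be locally constant.

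That last point is where the gap is. The identity you settle on, $0=[[\d_{\S},\d_{\S}],c(e)]=2[\d_{\S},[\d_{\S},c(e)]]$, is nothing but $[m_F,c(e)]=0$. This holds for \emph{every} function $F$, because both operators have order zero. Bracketing it further with an arbitrary $f$ gives $0=0$, so it cannot produce $\rho(e)F=0$. Your other two attempts have the same defect:
\begin{enumerate}
\item $[\d_{\S}^2,f]=[m_F,f]=0$ is automatic.
\item Bracketing $\tfrac12[\d_{\S},c(Df)]=0$ with $c(e)$ only yields $c(Df\circ e)=0$, which again does not involve $F$.
\end{enumerate}
Information about $F$ can only enter when $m_F$ is commuted with a genuinely first-order operator.

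The missing step is to commute $\d_{\S}$ with its own square. Since $\d_{\S}\d_{\S}^2=\d_{\S}^2\d_{\S}$, you get $[\d_{\S},m_F]=0$. Now apply axiom (1) of a Dirac generating operator with the test function $f=F$: this gives $\L_{\rho(e)}F=[[\d_{\S},F],c(e)]=0$ for all $e\in\Gamma(E)$. Surjectivity of $\rho$ then gives $dF=0$.

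Equivalently, your own part (1) formula gives $[\d_{\S},F]=\tfrac12c(DF)$, so $c(DF)=0$. Since $c(v)\neq0$ for $v\neq0$ (pair $v$ against some $w$ with $h(v,w)\neq0$), this forces $DF=\rho_h^*dF=0$, and $dF=0$ by injectivity of $\rho_h^*$. This is the paper's argument, and with it the rest of your part (3) goes through.
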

\begin{proof}
Choose a real adapted frame $\{e_a\}$, so that
$h(e_a,e_b)=\epsilon_a\delta_{ab}$ and
$\mathcal G e_a=\epsilon_a e_a$.
By the definition $h(Df,e)=\mathcal L_{\rho(e)}f$,
\[
  Df=\sum_a\epsilon_a\mathcal L_{\rho(e_a)}f\,e_a.
\]
Since the chosen generalized Levi--Civita connection has zero torsion,
\[
  \d_{\S}=\frac12\sum_a\epsilon_a c(e_a)\nabla^\S_{e_a}.
\]
The Leibniz rule therefore gives
\[
  [\d_{\S},f]
  =\frac12\sum_a\epsilon_a\mathcal L_{\rho(e_a)}f\,c(e_a)
  =\frac12c(Df).
\]
For real $f$, multiplication by $f$ is formally self-adjoint. Hence
\[
  [\d_{\S}^{\,*},f]
  =-[\d_{\S},f]^*
  =-\frac12c(\mathcal G Df),
\]
using $c(e)^*=c(\mathcal G e)$ for real sections $e$. For a complex function $f$,
the adjoint identity is instead
$[\d_{\S}^*,f]=-[\d_{\S},\overline f]^*$; together with
$c(z)^*=c(\mathcal G\overline z)$, it gives the same formulas by complex
linearity. The Clifford relation $[c(u),c(v)]=2h(u,v)\one_{\S}$ now yields
\begin{equation}\label{eq:gh-clifford-calculation}
 -\big[[\d_{\S},f_1],[\d_{\S}^{\,*},f_2]\big]
 =\frac14[c(Df_1),c(\mathcal G Df_2)]
 =\frac12h(Df_1,\mathcal G Df_2)\,\one_{\S}.
\end{equation}

By \eqref{eq:gh-induced-metric}, the right-hand side is
$g^{-1}(df_1,df_2)\one_{\S}$.

To continue, set
$A_j=[\d_{\S},f_j]$ and $B_j=[\d_{\S}^{\,*},f_j]$.
These are odd zeroth-order endomorphisms, and therefore commute with
multiplication by scalar functions. Expanding gives
\begin{align*}
 [\mathcal H_{\S},f_1]
 &=\d_{\S}B_1+A_1\d_{\S}^{\,*}
   +\d_{\S}^{\,*}A_1+B_1\d_{\S},\\
 [[\mathcal H_{\S},f_1],f_2]
 &=A_2B_1+A_1B_2+B_2A_1+B_1A_2\\
 &=[A_2,B_1]+[A_1,B_2]\\
 &=-2g^{-1}(df_1,df_2)\,\one_{\S}.
\end{align*}
This proves \eqref{eq:gh-metric-second}. Since $\mathcal H_{\S}$ has order
at most two, this identity identifies its symmetric second-order
coefficient with $-g^{ij}\one_{\S}$ in local coordinates. The stated symbol
convention gives \eqref{eq:gh-symbol}.
Formal self-adjointness follows by reversing the order of factors under
formal adjunction, and integration by parts gives
\eqref{eq:gh-positivity}.

Lemma~\ref{lem-canonical-dgo} supplies the Dirac-generating axiom
$\d_{\S}^{\,2}=c\one_{\S}$ for a smooth complex-valued function
$c$.
Since an operator commutes with its square,
$[\d_{\S},c]=0$.
Applying the first Dirac-generating identity, extended complex linearly,
we obtain
\[
  \mathcal L_{\rho(e)}c\,\one_{\S}
  =[[\d_{\S},c],c(e)]=0
  \quad(e\in\Gamma(E)).
\]
Surjectivity of $\rho$ implies $dc=0$. In fact $c$ is real
for the canonical operator used here, which is obtained by complexifying
its real density-normalized construction. Taking formal adjoints gives
the second identity in \eqref{eq:gh-central-squares}. Since locally
constant functions commute with differential operators,
\begin{align*}
 [\mathcal H_{\S},\d_{\S}]
 &=\d_{\S}^{\,*}\d_{\S}^{\,2}
   -\d_{\S}^{\,2}\d_{\S}^{\,*}=0,\\
 [\mathcal H_{\S},\d_{\S}^{\,*}]
 &=\d_{\S}(\d_{\S}^{\,*})^2
   -(\d_{\S}^{\,*})^2\d_{\S}=0.
\end{align*}
\end{proof}

\begin{remark} \label{rmk-exact-case}
    Equation \eqref{eq-DGO expression} agrees, in the stated conventions, with the form of \cite[Theorem 61, equation (75)]{CortesDavid2021}; its validity under our spinorial hypotheses is proved here. Equation \eqref{eq-DGO adjoint expression} gives its formal adjoint in the positive-isotropy model. In particular, if \(E\) is an exact Courant algebroid \(TM\oplus T^*M\), with the Courant bracket twisted by the three-form \(H\), and we choose its exterior-form Clifford module, then the Cartan and curvature terms vanish. Since the adjoint of the de Rham differential \(d\) is given by 
\[d^*=-\sum_i \iota_{e_i}\nabla_{e_i}^{\Lambda T^*},\]
we obtain \(\d_\S= d- H\wedge\) and \(\d_\S^*=d^*+\iota_H\).
Consequently,
\begin{equation}\label{eq:gh-exact-laplacian}
 \mathcal H_{\S}
 =(d-H\wedge)(d^*+\iota_H)
 +(d^*+\iota_H)(d-H\wedge),
\end{equation}
and $\d_{\S}^{\,2}=(\d_{\S}^{\,*})^2=0$.
If $H=0$, then $\mathcal H_{\S}=dd^*+d^*d$ is the Hodge Laplacian, and
its restriction to functions is the nonnegative Laplace--Beltrami
operator $d^*d$.
\end{remark}

\subsection{\texorpdfstring{Vertex operators containing \(\d_{\S}\), \(\d_{\S}^*\) and \(\mathcal{H}_\S\) as components}{Vertex operators realizing the Dirac generating operator and its adjoint as components}}

Return now to the abstract transitive setting with the fixed spinorial
data and any chosen generalized Levi--Civita connection. 
Let
\(U\subset M\) be open. Locally on \(U\), we choose a real
\(\mathcal G\)-adapted \(h\)-orthonormal frame \(\big\{e_i^{\B,+},e_\alpha^{\B,-}\big\}\) of \(E^\B\) and the copied frame \(e_i^{\F,+}=\iota e_i^{\B,+}\), \(e_\alpha^{\F,-}=\iota e_\alpha^{\B,-}\) of \(E^\F\). We now consider the following elements: 
\[
\begin{aligned}
&v_{\d_\S}:=\frac{1}{2}h^{-1}(-\tfrac12,-1)
=\frac{1}{2}\sum_a\epsilon_a(e_a^\F\otimes t^{-1/2})\otimes(e_a^\B\otimes t^{-1}),\\
&v_{\d_\S^*}:=-\frac{1}{2}G^{-1}(-\tfrac12,-1)
=-\frac{1}{2}\sum_a(e_a^\F\otimes t^{-1/2})\otimes(e_a^\B\otimes t^{-1}).
\end{aligned}
\]
Both are sections of \(\left(E^{\F}\otimes \underline{\C t^{-1/2}}\right)\otimes \left(E^\B\otimes \underline{\C t^{-1}}\right)\) over \(U\). Both tensors are parallel by Section \ref{subsection-bosonic-fermionic construction}. Consider the parallel tensor
\begin{equation}\label{eq:gh-tensor}
  \Theta_G
  :=-\frac14\big(h^{-1}\otimes G^{-1}
                    +G^{-1}\otimes h^{-1}\big),
\end{equation}
and accordingly,
\begin{equation}
  \begin{aligned}
    v_{\mathcal{H}}:=&\Theta_G\left(-\tfrac{1}{2},-1,-\tfrac{1}{2},-1\right)+\frac{1}{8} D_V\left(G^{-1}(-1,-1)\right)\\
    =&\Theta_G\left(-\tfrac{1}{2},-1,-\tfrac{1}{2},-1\right)+\frac{1}{8}\left(G^{-1}(-2,-1)+G^{-1}(-1,-2)\right)\\
    =&-\frac14\sum_{a,b}(\epsilon_a+\epsilon_b)e_a^\F(-\tfrac12)e_a^\B(-1)e_b^\F(-\tfrac12)e_b^\B(-1)\1\\
    &+\frac{1}{8}D_V\left(\sum_a e_a^\B(-1)e_a^\B(-1)\1\right).
  \end{aligned}  
\end{equation}
\begin{lem}
    \(v_{\d_\S}\), \(v_{\d_\S^*}\) and \(v_{\mathcal{H}}\) are parallel under the connection \(\widehat{\nabla}^{T}\) of \(T\left(\big(\widehat{E}\big)_{2^{-1}\Z}^{\B, \F, -}\right)\). In particular, they are elements of \((V_U^{\mathrm{fin}})_{(3/2)}\), \((V_U^{\mathrm{fin}})_{(3/2)}\), and \((V_U^{\mathrm{fin}})_{(3)}\), respectively.
\end{lem}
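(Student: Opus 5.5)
The plan is to reduce everything to the parallelism of the two inverse tensors \(h^{-1}\) and \(G^{-1}\), and of their mixed copies, under \(\nabla^T\). This was already noted in Section \ref{subsection-bosonic-fermionic construction}. The connection \(\widehat{\nabla}^T\) on \(T\bigl((\widehat E)^{\B,\F,-}_{2^{-1}\Z}\bigr)\) is induced from \(\nabla^\B\oplus\nabla^\F\) and acts trivially on the mode labels \(t^{-m}\), \(t^{-m+1/2}\). Hence, for any tensor \(\tau\in\Gamma(E^{i_1}\otimes\cdots\otimes E^{i_k})\) with \(i_j\in\{\B,\F\}\), and any choice of admissible negative modes \(r_1,\ldots,r_k\), the assignment \(\tau\mapsto\tau(r_1,\ldots,r_k)\) intertwines \(\nabla^T\) with \(\widehat{\nabla}^T\). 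It therefore sends parallel tensors to parallel sections of \(\mathcal T^-\). I would state this as a one-line observation, checking it first on decomposable tensors and extending by the Leibniz rule and locality.

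With this observation, \(v_{\d_\S}\) and \(v_{\d_\S^*}\) are the images of \((\iota\otimes\one)h^{-1}\) and \((\iota\otimes\one)G^{-1}\) in \(\Gamma(E^\F\otimes E^\B)\), placed at modes \((-\tfrac12,-1)\) and multiplied by constants. The parallelism of these mixed tensors follows from three facts: \(\nabla^\B\) preserves \(h^\B\) (it is a generalized connection) and \(\mathcal G^\B\) (it is metric), \(\iota\) is parallel by construction of \(\nabla^\F\), and duality then gives \(\nabla^T h^{-1}=\nabla^TG^{-1}=0\). For \(v_{\mathcal H}\), the first summand is the image of \(\Theta_G\). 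Tensor products of parallel tensors are parallel, and so are sums with constant coefficients, so \(\Theta_G\) is parallel. Here it is necessary to track the ordering: the slot order \(E^\F\otimes E^\B\otimes E^\F\otimes E^\B\) matches the displayed expansion, with \(\iota\) applied in slots one and three. For the second summand, \(G^{-1}(-1,-1)\) is parallel as a bosonic tensor. The local frame expressions are only presentations of these frame-independent tensors, so no frame dependence enters.

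The main point to verify is that \(D_V\) preserves parallel sections, that is, that \(D_V\) commutes with \(\widehat{\nabla}^T\). I would compute \(D_V\) fiberwise. It acts as a derivation on PBW monomials, raising each mode index by one: \(a(-m)\mapsto m\,a(-m-1)\) for bosons, with the analogous rule for fermions. This gives the displayed formula \(G^{-1}(-2,-1)+G^{-1}(-1,-2)\). In either form, \(D_V\) is a fiberwise operator built only from mode relabelings with constant coefficients. It therefore commutes with the induced connection, which acts on the \(E\)-factors alone, so \(D_V\) maps parallel sections to parallel sections. Alternatively, one can apply the observation directly to each of the two relabeled copies of \(G^{-1}\).

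Finally, I would read off the weights from the grading \(\operatorname{wt}=\sum(m_i-|a_i|/2)\). A fermionic mode at \(-\tfrac12\) contributes \(\tfrac12\) and a bosonic mode at \(-1\) contributes \(1\), so \(v_{\d_\S}\) and \(v_{\d_\S^*}\) have weight \(\tfrac32\). The quartic term of \(v_{\mathcal H}\) has weight \(3\). The term \(D_V G^{-1}(-1,-1)\) has weight \(2+1=3\), consistent with \(D_V\) raising weight by one. By Theorem \ref{holonomy principle} and the identification \eqref{Vector space of bosonic mosva}, parallel homogeneous sections on \(U\) lie in \(V_U\). They are of finite weight, hence lie in \((V_U^{\mathrm{fin}})_{(k)}\) for the stated values of \(k\). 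This also holds componentwise when \(U\) is disconnected.
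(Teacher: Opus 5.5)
Your proposal is correct and takes essentially the paper's route. The paper gives no separate proof: it points back to the parallelism of \(h^{-1}\), \(G^{-1}\) and their \(\iota\)-copies established in the construction of the sheaf of MOSVAs, with flat mode labels and closure under tensor products. Your explicit checks supply what the paper leaves implicit: \(D_V\) acts as a constant-coefficient mode-raising derivation, so \(D_V\bigl(G^{-1}(-1,-1)\bigr)=G^{-1}(-2,-1)+G^{-1}(-1,-2)\) is again parallel, and the weights are as stated.
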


\begin{thm}\label{d and d*}
Let \((\bar s_U)^\#\) denote the image of
\(s\in\Gamma_U(\S)\) in \((W_U^{\mathrm{fin}})_{(0)}\).
Then
\begin{align}
&\res_x x^{1/2}Y_{W_U^{\mathrm{fin}}}
\left(v_{\d_\S},x\right),  \label{v-d-S}\\
&\res_x x^{1/2}Y_{W_U^{\mathrm{fin}}}
\left(v_{\d_\S^*},x\right), \label{v-d-S-star}\\
&\res_x x^2Y_{W_U^{\mathrm{fin}}}(v_{\mathcal H},x) \label{v-H},
\end{align}
preserve the embedded spinor sections and act there as
\(\d_\S\), \(\d_\S^*\), and \(\mathcal{H}_\S\), respectively.
\end{thm}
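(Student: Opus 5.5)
The plan is to compute, on a ground vector \((\bar s_U)^\#\), the single coefficient of each vertex operator picked out by the residue. I would show that this coefficient is computed purely by the zero-mode action on \(M_{p,U}\), and then identify that zero-mode action with \(\Phi_U\) of the underlying parallel tensor via Theorem \ref{Module for associative algebra} and Proposition \ref{equivalent construction}.

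\textbf{Step 1 (the correction \(\exp(\Delta(x))\)).} First I check what \(\exp(\Delta(x))\) in \eqref{Real bosonic-fermonic vo} does to the three states. The operator \(\Delta(x)\) is quadratic in the fermionic annihilators \(e_i(m-\tfrac12)\).
\begin{itemize}
\item The states \(v_{\d_\S}\) and \(v_{\d_\S^*}\) each contain a single fermionic letter, so \(\Delta(x)\) kills them.
\item The first summand of \(v_{\mathcal H}\) contains two fermionic letters, both of mode \(-\tfrac12\). The only term of \(\Delta(x)\) that can contract them has \(m=n=1\), and \(C_{11}=0\).
\item The \(D_V\)-summand of \(v_{\mathcal H}\) is purely bosonic.
\end{itemize}
Hence \(Y_W=\bar Y_W\) on all three states, and only the naive twisted normal-ordered fields remain.

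\textbf{Step 2 (only zero modes survive).} The ground vector has weight \(0\). By the \(\mathbf d\)-commutator formula, the coefficient of \(x^{-\mathrm{wt}\,v}\) in \(Y_W(v,x)(\bar s_U)^\#\) lies in weight \(0\). The residues in \eqref{v-d-S}--\eqref{v-H} select exactly these coefficients, namely \(x^{-3/2}\) for the two weight-\(\tfrac32\) states and \(x^{-3}\) for the weight-\(3\) state.

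In the normal-ordered product, any monomial containing a positive mode annihilates the ground vector. Any monomial containing a negative mode but no positive mode raises the weight. So only the all-zero-mode term contributes. Its power of \(x\) is \(x^{-1/2}\) for each fermionic field \(e^\F_W(x)\) and \(x^{-1}\) for each bosonic field \(e^\B_W(x)\), which matches the exponents above.

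\textbf{Step 3 (zero modes act by \(\Phi_U\)).} On \(1\otimes_{T^{\Hol}}s\), a product of zero modes in written order acts by the \(T(E_p^{\B,\F})\)-module structure. Because the full contracted tensor is parallel, i.e. holonomy-invariant, it passes through \(\otimes_{T^{\Hol}}\) and acts as \(1\otimes\Phi_U(\tau)s\).

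For \(v_{\d_\S}\), Proposition \ref{equivalent construction} applies with the fermionic factor first. The correction term \(\Phi_U(\nabla^T_{X^\B}\cdots)\) is absent because \(X^\B=0\), giving
\[
\tfrac12\sum_a\epsilon_a c(e_a)\nabla^\S_{e_a}=\d_\S .
\]
The same computation gives \(-\tfrac12\sum_a c(e_a)\nabla^\S_{e_a}\) for \(v_{\d_\S^*}\), which is \(\d_\S^*\) by Theorem \ref{thm-adjoint using LC connection}.

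For the four-tensor \(h^{-1}\otimes G^{-1}\), I run the recursion of Proposition \ref{equivalent construction}. The inner two-tensor \(\sum_b e_b^\F\otimes e_b^\B\) is itself parallel and frame independent, so the correction terms coming from the bosonic derivative vanish. This yields
\[
\Phi_U(h^{-1}\otimes G^{-1})=(2\d_\S)(-2\d_\S^*).
\]
Similarly \(\Phi_U(G^{-1}\otimes h^{-1})=-4\,\d_\S^*\d_\S\). Hence \(\Phi_U(\Theta_G)=\mathcal H_\S\).

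\textbf{Step 4 (main obstacle: twisted normal ordering of zero modes).} The twisted normal ordering \(\typecolon\cdot\typecolon\) of the zero-mode block is not the plain product. Its Clifford-like recursion \cite[(20)]{FHQ2026} adds contraction terms between fermionic zero modes.

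For \(v_{\d_\S}\) and \(v_{\d_\S^*}\) there is only one fermion, so I expect no correction. For \(v_{\mathcal H}\), I would carry out the recursion on the zero-mode block \(e_a^\F(0)e_a^\B(0)e_b^\F(0)e_b^\B(0)\). This should produce an extra term proportional to
\[
\sum_a h(e_a,e_a)\text{-weighted }\nabla^\S_{e_a}\nabla^\S_{e_a}
\]
(together with any connection corrections), via the contraction of \(e_a^\F(0)\) against \(e_b^\F(0)\).

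Separately, \(Y(D_Vu,x)=\tfrac{d}{dx}Y(u,x)\) turns the \(x^{-2}\) zero-mode coefficient of \(G^{-1}(-1,-1)\) into \(-2x^{-3}\sum_a\nabla^{\S,2}_{e_a,e_a}\), so the \(\tfrac18 D_V\) summand contributes \(-\tfrac14\sum_a\nabla^{\S,2}_{e_a,e_a}\). I would verify that this exactly cancels the normal-ordering contraction term. I would do so by matching coefficients in an adapted frame and using \(\epsilon_a+\epsilon_b\) together with \(c(e_a)^2=\epsilon_a\). This bookkeeping, including the signs from the odd-letter permutation, is the step I expect to require the most care.
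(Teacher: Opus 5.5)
Your plan matches the paper's proof step for step: \(\Delta(x)\) annihilates all three states (using \(C_{11}=0\)), and only the zero-mode block survives on the ground vector. The holonomy-invariant tensor then passes through the induced module to act by \(\Phi_U\); your multiplicativity argument giving \(\Phi_U(\Theta_G)=\mathcal H_\S\) is what the paper means by ``by construction.'' Finally, the \(\tfrac18 D_V\) summand exists precisely to cancel the zero-mode normal-ordering contraction.

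To close your Step 4, the input you need from \cite{FHQ2026} is
\[
\typecolon e_a^\F(0)e_a^\B(0)e_b^\F(0)e_b^\B(0)\typecolon
=e_a^\F(0)e_a^\B(0)e_b^\F(0)e_b^\B(0)-\tfrac12 h^\F(e_a^\F,e_b^\F)\,e_a^\B(0)e_b^\B(0).
\]
The scalar \(h^\F(e_a^\F,e_b^\F)=\epsilon_a\delta_{ab}\) comes from the normal-ordering recursion itself, not from \(c(e_a)^2=\epsilon_a\), because the zero modes act on \(M_{p,U}\) through the tensor algebra rather than by Clifford multiplication. With the weights \(-\tfrac14(\epsilon_a+\epsilon_b)\), this produces exactly \(+\tfrac14\Phi_U((G^\B)^{-1})\), an unweighted sum, which your \(-\tfrac14\sum_a\nabla^{\S,2}_{e_a,e_a}\) cancels.
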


\begin{proof}
   We prove \eqref{v-d-S} first.  Since \(W_U^{0}\) is a \(V_U\)-submodule of the fiberwise \(W_{p, U}\), the vertex operator on \(W_U^{0}\) is just the restriction of that on \(W_{p, U}\). In particular, we have 
\begin{align*}
    Y_{W_U^{0}}\left(\frac{1}{2}h^{-1}\left(-\tfrac{1}{2}, -1\right), x\right)=& \bar{Y}_{W_{p, U}} \left(\frac{1}{2} \sum_{a} \epsilon_a \left(\exp\Delta(x)\right)e_a^\F|_p\left(-\tfrac{1}{2}\right) e_a^\B|_p(-1)\1, x\right)\\
    =&\frac{1}{2}\sum_a\epsilon_a \typecolon e_a^\F(x) \cdot e_a^\B(x) \typecolon\\
     =&\frac{1}{2}\sum_a\epsilon_a \sum_{k,j\in \Z} \typecolon e_a^\F(k) e_a^\B(j) \typecolon x^{-k-j-3/2}
\end{align*}
Hence, we compute the component 
\begin{align*}
   & \res_x x^{1/2} Y_{W_U^{0}}\left(\frac{1}{2}h^{-1}\left(-\tfrac{1}{2}, -1\right), x\right) \bar{s}_U\\
    =&  \res_x \sum_a\sum_{k,j\in \Z} \frac{1}{2}\epsilon_a\typecolon e_a^\F(k) e_a^\B(j) \typecolon x^{-k-j-1}  \bar{s}_U\\
    =& \sum_a\sum_{k \in \Z} \frac{1}{2}\epsilon_a\typecolon e_a^\F(-k) e_a^\B(k) \typecolon \bar{s}_U\\
    =& \sum_a \frac{1}{2}\epsilon_a \left(\sum_{k \in \Z_+} e_a^\F(-k) e_a^\B(k) +\sum_{k \in -\Z_+} e_a^\F(-k) e_a^\B(k) +  e_a^\F(0) e_a^\B(0) \right) \bar{s}_U\\
    =&  \sum_a \frac{1}{2}\epsilon_a \left(\sum_{k \in \Z_+} e_a^\F(-k) e_a^\B(k) + \sum_{k \in \Z_+} e_a^\B(-k) e_a^\F(k) +e_a^\F(0) e_a^\B(0) \right) \bar{s}_U\\
    =&\sum_a   \frac{1}{2}\epsilon_a e_a^\F|_p(0) e_a^\B|_p(0) \left(1\otimes \left(1\otimes_{\left(T\left(E^{\B, \F}\right)\right)^{\hol\left(\widehat{\nabla}^{T}, U\right)}} s \right) \right)\\
=&1\otimes \left(\frac{1}{2} \sum_a \epsilon_a e_a^\F|_p \otimes  e_a^\B|_p \otimes_{\left(T\left(E^{\B, \F}\right)\right)^{\hol\left(\widehat{\nabla}^{T}, U\right)}} s \right)\\
=& 1\otimes \left( 1 \otimes_{\left(T\left(E^{\B, \F}\right)\right)^{\hol\left(\widehat{\nabla}^{T}, U\right)}} \Phi \left(\frac{1}{2} \sum_a \epsilon_a  e_a^\F \otimes  e_a^\B\right) s \right)\\
=& 1\otimes \left( 1 \otimes_{\left(T\left(E^{\B, \F}\right)\right)^{\hol\left(\widehat{\nabla}^{T}, U\right)}} \left(\sum_a  \frac{1}{2}\epsilon_a c (e_a) \nabla^\S_{e_a}\, s \right) \right)\\
=& 1\otimes \left( 1 \otimes_{\left(T\left(E^{\B, \F}\right)\right)^{\hol\left(\widehat{\nabla}^{T}, U\right)}} \left(\d_{\S} s \right) \right).
\end{align*}
Here, we used the commutativity of the fermionic positive and bosonic negative modes, and all positive modes act trivially. Moreover, the second to last equality is due to Equation (\ref{second order example}). 

Since \(\W^{0}\) is a presheaf of \(\V\)-modules and \(\W\) is the sheafification of \(\W^{0}\), the conclusion above for \(\W^{0}\) and \(\bar{s}_U \in W_U^{0}\) implies the same conclusion holds for \(\W\). More precisely,
\[\res_x x^{1/2} Y_{W_U^{\mathrm{fin}}}\left(v_{\d_\S}, x\right) \left(\bar{s}_U\right)^\#=\left(1\otimes \left(1 \otimes_{\left(T\left(E^{\B, \F}\right)\right)^{\hol\left(\widehat{\nabla}^{T}, U\right)}} \left(\d_{\S} s \right) \right)\right)^\#\]
by definition of the sheafification. Thus we are done for the first statement. 

For \eqref{v-d-S-star}, just notice that 
\[\Phi\left(-\frac{1}{2}\sum_a e_a^\F\otimes e_a^\B\right)  =-\frac{1}{2}\sum_a c(e_a) \nabla^\S_{e_a} =\d_{\S}^*.\]
All other justifications are verbatim.

Last, we prove \eqref{v-H}. The situation is a bit subtler since the fermionic correction \(\exp(\Delta)\) and the normal ordering of zero modes. First, \(\Theta_G\left(-\tfrac{1}{2},-1,-\tfrac{1}{2},-1\right)\) contains exactly two fermionic mode components, both with mode \(-\tfrac{1}{2}\). A nonzero term of $\Delta(x)$ acting on it would therefore have to contract these two components with $m=n=1$; however $C_{11}=0$. Hence
\[\Delta(x)\Theta_G\left(-\tfrac{1}{2},-1,-\tfrac{1}{2},-1\right)=0\]
so that
\[\exp(\Delta(x))\Theta_G\left(-\tfrac{1}{2},-1,-\tfrac{1}{2},-1\right)=\Theta_G\left(-\tfrac{1}{2},-1,-\tfrac{1}{2},-1\right).\]
By a similar computation above, since the positive modes vanish when acting on ground states, we focus only on the zero modes. Recall the definition of normal ordering of zero modes in \cite{FHQ2026}:
\begin{equation}\label{eq:gh-affinized-tensor}
\typecolon e_a^\F(0)e_a^\B(0)e_b^\F(0)e_b^\B(0)\typecolon=
 e_a^\F(0)e_a^\B(0)e_b^\F(0)e_b^\B(0)
 -\frac12h^\F(e_a^\F,e_b^\F)
 e_a^\B(0)e_b^\B(0).
\end{equation}
Since $h^\F(e_a^\F,e_b^\F)=\epsilon_a\delta_{ab}$, the contraction
part of \eqref{eq:gh-affinized-tensor} is
\[
 \frac18\sum_{a,b}(\epsilon_a+\epsilon_b)
 h^\F(e_a^\F,e_b^\F)e_a^\B(0)e_b^\B(0)
 =\frac14\sum_a e_a^\B(0)e_a^\B(0):=\frac{1}{4} (G^\B)^{-1}.
\]
Also notice that by construction of our associative algebra action \(\Phi\),
\[\Phi_U\left(-\frac14\sum_{a,b}(\epsilon_a+\epsilon_b) e_a^\F e_a^\B e_b^\F e_b^\B\right)s=\mathcal{H}_\S s.\]

Combining the above discussions, we therefore have
\begin{equation}\label{eq:gh-raw-affinization}
 \res_x x^2Y_{W_{p,U}}(\Theta_G\left(-\tfrac{1}{2},-1,-\tfrac{1}{2},-1\right),x)\bar s_{p,U}
 =\overline{\left(\mathcal{H}_\S
   +\frac14\Phi_U\left((G^\B)^{-1}\right)\right)s}_{p,U}.
\end{equation}
On the other hand, the state $G^{-1}(-1,-1)$ is purely bosonic, so $\Delta(x)$ annihilates it and thus $D_V \left(G^{-1}(-1,-1)\right)$ as well. The same ground-state argument gives
\begin{equation}\label{eq:gh-quadratic-component}
 \res_xxY_{W_{p,U}}(G^{-1}(-1,-1),x)\bar s_{p,U}
 =\overline{\Phi_U((G^\B)^{-1})s}_{p,U}.
\end{equation}
By definition,
\[
 Y_{W_{p,U}}(D_V\left(G^{-1}(-1,-1)\right),x)
 =\frac{d}{dx}Y_{W_{p,U}}(G^{-1}(-1,-1),x).
\]
Hence formal residue calculus yields
\[
 \res_xx^2Y_{W_{p,U}}(D_V\left(G^{-1}(-1,-1)\right),x)
 =-2\res_xxY_{W_{p,U}}(G^{-1}(-1,-1),x).
\]
Combining this with \eqref{eq:gh-raw-affinization} and
\eqref{eq:gh-quadratic-component}, the extra
$\frac14\Phi_U((G^\B)^{-1})$ term cancels, and
\[
 \res_xx^2Y_{W_{p,U}}(v_{\mathcal H},x)\bar s_{p,U}
 =\overline{\mathcal H_{\S}s}_{p,U}
\]
Now the rest of the argument is the same as in the proof of \eqref{v-d-S}. This proves
\eqref{v-H} on every open $U$.
\end{proof}

As a conceptual interpretation, our construction may be viewed as a quantization of the analytic generalized geometry, or more precisely, an embedding of the generalized geometry in an appropriate super quantum nonlinear sigma model framework, that is yet to be constructed mathematically, with key geometric information encoded by (and in fact can be reconstructed from) the Dirac generating operator, its formal adjoint, and the generalized Hodge Laplacian operator.

\appendix
\section{Proof of Theorem \ref{thm-explicit calculation of DGO}}
 \label{appendix-explicit calculation of DGO}

Let \(\{e_i\}\) be a local \(g\)-orthonormal frame of \(TM\), with dual coframe
\(\{e^i\}\). Let \((r_a)\) be a local \(c\)-orthonormal frame of \(\mathfrak g\). 
Define the adapted frame of \(E\)
\[
p_i:=e_i+ge_i\in E_+,\ 
q_a:=r_a\in E_+,\ 
m_i:=e_i-ge_i\in E_-.
\]
Then
\[
h(p_i,p_j)=\delta_{ij},
\ 
h(q_a,q_b)=\delta_{ab},
\ 
h(m_i,m_j)=-\delta_{ij}.
\]
Hence the dual frame under the Courant pairing \(h\) is
\(
\widetilde p_i=p_i,\ 
\widetilde q_a=q_a,\ 
\widetilde m_i=-m_i.
\) 
Set
\[
P_i:=c(p_i)=\iota_i+\epsilon_i,
\ 
M_i:=c(m_i)=\iota_i-\epsilon_i,
\ 
R_a:=c(q_a),
\]
where
\(
\iota_i:=\iota_{e_i},
\ 
\epsilon_i(\omega):=e^i\wedge\omega,
\) for notational simplicity. The difference \(A=\nabla^E-D^0\) is tensorial. For checking
torsion at a point one may take \(\nabla^g e_i=d^\theta r_a=0\)
there, since no derivatives of these difference coefficients occur.
Write
\[
H_{ijk}:=H(e_i,e_j,e_k),
\ 
F_{ij}^a:=c(F(e_i,e_j),r_a),
\]
and
\[
C_{abc}:=c([r_a,r_b]_{\mathfrak g},r_c).
\]
Substituting \(p_i,q_a,m_i\) into the four connection formulas gives
the complete difference table:
\[
A_{p_i}p_j=\frac16H_{ijk}p_k+\frac23F_{ij}^aq_a,
\  A_{p_i}q_a=-\frac23F_{ik}^ap_k,
\]
\[
A_{p_i}m_j=-\frac12H_{ijk}m_k,
\  A_{q_a}p_j=-\frac13F_{jk}^ap_k,
\]
\[
A_{q_a}q_b=\frac13C_{abc}q_c,
\  A_{q_a}m_j=-F_{jk}^am_k,
\]
\[
A_{m_i}p_j=\frac12H_{ijk}p_k+F_{ij}^aq_a,
\  A_{m_i}q_a=-F_{ik}^ap_k,
\  A_{m_i}m_j=-\frac16H_{ijk}m_k.
\]
Here and in the calculations below repeated indices are summed. The table
preserves \(E_+\) and \(E_-\), and its lowered coefficients
\(a_{A;BC}=h(A_{u_A}u_B,u_C)\) are skew in \(B,C\). To check
torsion, take ordinary normal frames at a point and put
\(\beta_{ABC}=h(u_A\circ u_B,u_C)\). The independent nonzero bracket
types are \(H_{ijk}/2\) on three horizontal letters (with any signs),
\(F_{ij}^a\) on two horizontal letters and \(q_a\), and \(C_{abc}\)
on three vertical letters. The cyclic sum of the table gives exactly
these values. For \(H\), the same-sign sums give \(3/6=1/2\)
and the mixed-sign sum has one contribution \(1/2\). For \(F\),
the \(ppq\), \(pmq\) and \(mmq\) sums are respectively
\(2/3+2/3-1/3=1\), \(1\), and \(1\). The vertical sum is
\(3/3=1\). Other types vanish. Hence
\(a_{A;BC}+a_{B;CA}+a_{C;AB}=\beta_{ABC}\), proving zero torsion.
Also
\[
v_A=\sum_i A_{p_i}p_i+\sum_a A_{q_a}q_a-\sum_i A_{m_i}m_i=0
\]
by skewness of \(H,F,C\). Thus the weighted lift has no scalar
correction relative to the product connection.

The negative-quarter spin formula now gives, retaining every Clifford
factor in its written order,
\[
\begin{aligned}
K_{p_i}&=-\frac1{24}H_{ijk}P_jP_k-\frac18H_{ijk}M_jM_k
-\frac13F_{ij}^aP_jR_a,\\
K_{m_i}&=-\frac18H_{ijk}P_jP_k-\frac1{24}H_{ijk}M_jM_k
-\frac12F_{ij}^aP_jR_a,\\
K_{q_a}&=\frac1{12}F_{ij}^aP_iP_j-\frac14F_{ij}^aM_iM_j
-\frac1{12}C_{abc}R_bR_c.
\end{aligned}
\]
These formulas also verify Clifford compatibility directly by
\([K_{u_A},c(u_B)]=c(A_{u_A}u_B)\). In invariant form,
\[
\d_\S=\frac12\sum_Ac(\widetilde u_A)(D^0)^\S_{u_A}
-\frac18\sum_{A,B,C}a_{A;BC}
c(\widetilde u_A)c(\widetilde u_B)c(\widetilde u_C).
\]
Since
\(
\widetilde p_i=p_i,\ 
\widetilde q_a=q_a,\ 
\widetilde m_i=-m_i,
\)
we have
\[
\d_\S
=
\frac12
\left(
\sum_iP_i\nabla^{\S}_{p_i}
+
\sum_aR_a\nabla^{\S}_{q_a}
-
\sum_iM_i\nabla^{\S}_{m_i}
\right).
\]
The anchors are
\(
\rho(p_i)=e_i,\ 
\rho(m_i)=e_i,\ 
\rho(q_a)=0.
\)
Thus the first-order part is
\[
\frac12\sum_i(P_i-M_i)\nabla_{e_i}^{\Lambda T^*\otimes \S_{\mathfrak g}}
=
\sum_i\epsilon_i\nabla_{e_i}^{\Lambda T^*\otimes \S_{\mathfrak g}}.
\]
Therefore the first-order part of \(\d_\S\) is  \(d^{\nabla^{\S_{\mathfrak g}}}\).  Next, using the \(H\)-terms in the above connection coefficients, the zero-order
\(H\)-contribution is
\[
\begin{aligned}
\mathcal A_H
={}&
-\frac1{48}H_{ijk}P_iP_jP_k
-\frac1{16}H_{ijk}P_iM_jM_k\\
&+\frac1{16}H_{ijk}M_iP_jP_k
+\frac1{48}H_{ijk}M_iM_jM_k.
\end{aligned}
\]
Using
\(
P_i=\iota_i+\epsilon_i,\ 
M_i=\iota_i-\epsilon_i,
\) 
and the total skew-symmetry of \(H_{ijk}\), the contraction terms cancel and
one obtains
\[
{ 
\mathcal A_H
=
-\frac16H_{ijk}\epsilon_i\epsilon_j\epsilon_k
=
-H\wedge.
}
\]
Next, the \(F\)-terms give
\[
\begin{aligned}
\mathcal A_F
={}&
-\frac1{12}F_{ij}^aP_iP_jR_a
+\frac1{12}F_{ij}^aP_iR_aP_j
+\frac1{24}F_{ij}^aR_aP_iP_j\\
&-\frac18F_{ij}^aR_aM_iM_j
+\frac18F_{ij}^aM_iP_jR_a
-\frac18F_{ij}^aM_iR_aP_j.
\end{aligned}
\]
Using that \(R_a\) anticommutes with \(P_i\) and \(M_i\), and then expanding
\(P_i=\iota_i+\epsilon_i\), \(M_i=\iota_i-\epsilon_i\), all contraction
terms cancel. The result is
\[
{ 
\mathcal A_F
=
-\frac12F_{ij}^a\epsilon_i\epsilon_jR_a.
}
\]
Thus, acting on \(\omega\otimes s\),
\[
\mathcal A_F(\omega\otimes s)
=
(-1)^{|\omega|+1}
\frac12F_{ij}^a
e^i\wedge e^j\wedge\omega\otimes r_a\cdot s.
\]
Thus \(\mathcal A_F=(-1)^{\deg+1}\overline F\), with
\(\overline F\) defined in Theorem \ref{thm-explicit calculation of DGO}.
Last, the only Cartan contribution comes from
\[
A_{q_a}q_b=\frac13C_{abc}q_c.
\]
Therefore
\[
\mathcal A_C
=
-\frac1{24}C_{abc}R_aR_bR_c.
\]
Using the Cartan element \(C\) defined in the theorem, we obtain
\[
{ 
\mathcal A_C(\omega\otimes s)
=
\frac14(-1)^{|\omega|+1}\omega\otimes C\cdot s.
}
\]
Combining the first-order part, the \(H\)-part, the \(F\)-part, and the Cartan
part gives the formula claimed in \eqref{eq-DGO expression}, or equivalently,
\[
\d_\S
=
d^{\nabla^{\S_{\mathfrak g}}}
-
H\wedge
+
\frac14(-1)^{\deg+1}C\cdot
+
(-1)^{\deg+1}\overline F.\]

Next, we compute the adjoint \(\d_\S^*\). By Theorem \ref{thm-adjoint using LC connection},
\[
\d_\S^*
=
-\frac12\sum_A c(u_A)\nabla^{\S}_{u_A}.
\]
In the adapted frame, this is
\[
\d_\S^*
=
-\frac12
\left(
\sum_iP_i\nabla^{\S}_{p_i}
+
\sum_aR_a\nabla^{\S}_{q_a}
+
\sum_iM_i\nabla^{\S}_{m_i}
\right).
\]
The first-order part is
\[
-\frac12\sum_i(P_i+M_i)
\nabla_{e_i}^{\Lambda T^*\otimes \S_{\mathfrak g}}.
\]
Since
\(
P_i+M_i=2\iota_i,
\)
the first-order part of \(\d_\S^*\) is \(-\sum_i\iota_i\nabla_{e_i}^{\Lambda T^*\otimes \S_{\mathfrak g}}\).

Next, using the same \(H\)-coefficients but the generalized-metric contraction, one
gets
\[
\begin{aligned}
\mathcal B_H
={}&
\frac1{48}H_{ijk}P_iP_jP_k
+\frac1{16}H_{ijk}P_iM_jM_k\\
&+\frac1{16}H_{ijk}M_iP_jP_k
+\frac1{48}H_{ijk}M_iM_jM_k.
\end{aligned}
\]
Expanding \(P_i=\iota_i+\epsilon_i\) and \(M_i=\iota_i-\epsilon_i\), the
wedge terms cancel, and the contractions give
\[
{ 
\mathcal B_H
=
\frac16H_{ijk}\iota_i\iota_j\iota_k.
}
\]
This is \(\iota_H\) with the contraction order fixed in the theorem.
Next, the \(F\)-part of the generalized-metric contraction is
\[
\begin{aligned}
\mathcal B_F
={}&
\frac1{12}F_{ij}^aP_iP_jR_a
-\frac1{12}F_{ij}^aP_iR_aP_j
-\frac1{24}F_{ij}^aR_aP_iP_j\\
&+\frac18F_{ij}^aR_aM_iM_j
+\frac18F_{ij}^aM_iP_jR_a
-\frac18F_{ij}^aM_iR_aP_j.
\end{aligned}
\]
After moving \(R_a\) to the right and expanding \(P_i,M_i\), the wedge terms
cancel and one obtains
\[
{ 
\mathcal B_F
=
\frac12F_{ij}^a\iota_i\iota_jR_a.
}
\]
Acting on \(\omega\otimes s\), this gives
\[
\mathcal B_F(\omega\otimes s)
=
(-1)^{|\omega|}
\frac12F_{ij}^a
\iota_i\iota_j\omega\otimes r_a\cdot s.
\]
Thus \(\mathcal B_F=(-1)^{\deg}\overline F^\sharp\), with
\(\overline F^\sharp\) as defined in the theorem.
Last, the Cartan contribution is
\[
\mathcal B_C
=
\frac1{24}C_{abc}R_aR_bR_c.
\]
Hence
\[
{ 
\mathcal B_C(\omega\otimes s)
=
\frac14(-1)^{|\omega|}\omega\otimes C\cdot s.
}
\]
Combining all terms, we obtain equation \eqref{eq-DGO adjoint expression}, or equivalently,
\[
\d_\S^*
=
-\sum_i\iota_i\nabla_{e_i}^{\Lambda T^*\otimes \S_{\mathfrak g}}
+
\iota_H
+
\frac14(-1)^{\deg}C\cdot
+
(-1)^{\deg}\overline F^\sharp.
\]

Finally, the zero-order terms can be checked directly by adjoint
reversal. In the positive-isotropy metric,
\(P_i^*=P_i\), \(M_i^*=-M_i\), \(R_a^*=R_a\) and
\(\epsilon_i^*=\iota_i\). Thus
\[
\begin{aligned}
(-H\wedge)^*&=\frac16H_{ijk}\iota_i\iota_j\iota_k,\\
\left(-\frac12F_{ij}^a\epsilon_i\epsilon_jR_a\right)^*
&=\frac12F_{ij}^a\iota_i\iota_jR_a,\\
\left(-\frac1{24}C_{abc}R_aR_bR_c\right)^*
&=\frac1{24}C_{abc}R_aR_bR_c.
\end{aligned}
\]
The ordinary product connection contributes its covariant codifferential.
This independently recovers the same formal adjoint and its contraction
orders.

%\enlargethispage{2\baselineskip}
\bibliographystyle{alpha} 
\bibliography{references}
\end{document}